\def\B{\mathscr B}
\def\C{\mathbb C}
\def\d{\mathrm d}
\def\f{\mathfrak f}
\def\EE{\mathscr E}
\def\F{\mathscr F}
\def\G{\mathcal G}
\def\H{\mathcal H}
\def\Hrond{\mathscr H}
\def\hs{\mathfrak h}
\def\K{\mathscr K}
\def\M{\mathrm M}
\def\N{\mathbb N}
\def\R{\mathbb R}
\def\S{\mathbb S}
\def\SS{\mathscr S}
\def\U{\mathscr U}
\def\V{\mathscr V}
\def\Z{\mathbb Z}
\def\e{\mathop{\mathrm{e}}\nolimits}
\def\im{\mathop{\mathrm{Im}}\nolimits}
\def\re{\mathop{\mathrm{Re}}\nolimits}
\def\Ran{\mathop{\mathrm{Ran}}\nolimits}
\DeclareMathOperator*{\slim}{s\hspace{0.1pt}-\hspace{0.1pt}lim}
\def\ltwo{\mathsf{L}^{\:\!\!2}}
\def\lone{\mathsf{L}^{\:\!\!1}}
\def\Lp{\mathsf{L}^{\:\!\!p}}
\def\linf{\mathsf{L}^{\:\!\!\infty}}
\def\Tr{\mathop{\mathrm{tr}}\nolimits}
\def\dim{\mathop{\mathrm{dim}}\nolimits}
\def\sgn{\mathop{\mathrm{sgn}}\nolimits}
\def\ind{\mathop{\mathrm{ind}}\nolimits}
\def\Index{\mathop{\mathrm{Index}}\nolimits}
\def\det{\mathop{\mathrm{det}}\nolimits}
\def\Wind{\mathop{\mathrm{Wind}}\nolimits}
\newtheorem{Theorem}{Theorem}[section]
\newtheorem{Remark}[Theorem]{Remark}
\newtheorem{Lemma}[Theorem]{Lemma}
\newtheorem{Corollary}[Theorem]{Corollary}
\newtheorem{Proposition}[Theorem]{Proposition}
\begin{document}

%--------------------------------------------------------------------------------------
% Title
%--------------------------------------------------------------------------------------

\title{Levinson's theorem for two-dimensional scattering systems: \\
it was a surprise, it is now topological~!}

\author{A. Alexander${}^1$\footnote{Supported by an Australian Government RTP scholarship.}, D.~T. Nguyen${}^2$, A. Rennie${}^1$\footnote{Supported by ARC Discovery grant DP220101196.}, S. Richard${}^3$
\footnote{Supported by JSPS Grant-in-Aid for scientific research C no 21K03292.}}

\date{\small}
\maketitle
\vspace{-1cm}

\begin{quote}
\begin{itemize}
\item[1] School of Mathematics and Applied Statistics, University of Wollongong,\\
	Wollongong, Australia
\item[2] Department of Physics, Graduate School of Science, Nagoya University, Furo-cho, Chikusa-ku, Nagoya, 464-8602, Japan
\item[3] Institute for Liberal Arts and Sciences \& Graduate School of Mathematics, Nagoya University, Furo-cho, Chikusa-ku, Nagoya, 464-8601, Japan
\item[] E-mail:  angusa@uow.edu.au, thanhnguyen@st.phys.nagoya-u.ac.jp, renniea@uow.edu.au, \\ richard@math.nagoya-u.ac.jp
\end{itemize}
\end{quote}

%--------------------------------------------------------------------------------------

\begin{abstract}
We prove a general Levinson's theorem for Schr\"odinger operators in two dimensions with threshold obstructions at zero energy. Our results confirm and simplify earlier seminal results of Boll\'e, Gesztesy et al., while providing an explicit topological interpretation.
We also derive explicit formulas for the wave operators, and so show that they are elements of a $C^*$-algebra introduced by Cordes.
As a consequence of our approach, we provide an evaluation of
the spectral shift function at zero in the presence of $p$-resonances.
\end{abstract}

\textbf{2010 Mathematics Subject Classification:} 81U05, 35P25.

\smallskip

\textbf{Keywords:} Schr\"odinger operators, wave operators, resonances, topological index theorem.

%--------------------------------------------------------------------------------------
%\tableofcontents
%--------------------------------------------------------------------------------------

%--------------------------------------------------------------------------------------
\section{Introduction}\label{sec_intro}
\setcounter{equation}{0}
%--------------------------------------------------------------------------------------

Scattering theory for two-dimensional Schr\"odinger operators is a challenging subject
that has been the focus of many studies. 
It is known that the $0$-energy behaviour of the underlying self-adjoint operator
is fairly complicated and plays a crucial role.
Namely, the possible coexistence of a $0$-energy bound state and of two types
of $0$-energy resonances has an impact on propagation properties of the evolution group
and on boundedness of the wave operators in various spaces.
A short review of the corresponding literature is provided below.

The intricate $0$-energy behaviour of the resolvent of two-dimensional 
Schr\"odinger operators has also a tremendous impact on the so-called 
Levinson's theorem. It was shown in \cite{BGDW}, and later confirmed in
\cite{BGD88}, that the presence of an $s$-resonance does not play any role in this context
while the presence of $p$-resonances leads to a contribution similar to bound states.
These properties are in sharp contrast from the one-dimensional or three-dimensional 
situation, and for that reason it was announced as \emph{a surprise}
in the first of the two mentioned papers. 
Unfortunately, the proofs of the results of
\cite{BGD88} are based on double asymptotic expansions of the resolvent, which make
them strenuous to follow.

The aim of the present paper is threefold. We firstly confirm the results
obtained in \cite{BGD88} for Levinson's theorem.
Secondly, we recast their proof in an updated framework
with more powerful tools. Thirdly, we restore the topological nature
of Levinson's theorem by exhibiting it as an index theorem in scattering theory.
This general approach has been described in the review paper \cite{Ric16}
and illustrated in several examples \cite{AR,IR1,IR2,KPR,KR,KR08_1,KR08,KR08_2,KR12,NPR,PR,RT10}. In doing so, we recover the analytic formula for the number of bound states stated in \cite[Thm.~6.3]{BGD88}.

Before presenting our results in more detail, let us provide a brief (and non-exhaustive) description of the literature related to our work.
A decade after the two surprising papers \cite{BGDW,BGD88}, 
a renewed interest in the two-dimensional case has been triggered by
the work \cite{Yaj99} and then \cite{JY02} on the $\Lp$-boundedness of the wave operators. However,
these works were conducted under the assumption that $0$-energy bound states and
$0$-energy resonances are absent (the so-called regular or generic case). The next
breakthrough came with the derivation in \cite{JN01} of a simplified resolvent
expansion, no longer given as a two parameter expansion, but in terms of powers of a
single parameter. Subsequently, numerous works took advantage of this simplified
resolvent expansion, as for example \cite{B16,EG12_0,RT13_2,Sch05} in which the
assumption of the absence $0$-energy bound states and $0$-energy resonances remains. In
other works, it was assumed that $0$-energy bound states and $p$-resonances are absent,
as for example in \cite{T17}, or that only the $p$-resonances are absent, as in
\cite{EGG}. 
The first results on the behaviour of the Schr\"odinger evolution in the general case 
appeared then in \cite{EG12_1}. More recently, two-dimensional
Schr\"odinger operators with point interactions have been investigated: the boundedness of
the wave operators in $\Lp$-spaces in the regular case has been discussed in
\cite{CMY}, while a full picture has been provided in \cite{Yaj20_1}. 
Simultaneously, results on the scattering operator in the general setting
have been exhibited in \cite{RTZ}, together with an analysis of the wave operators
in the absence of $p$-resonance.
In particular, this paper contains the confirmation of the $0$-energy behaviour of the scattering matrix, namely $\lim_{\lambda\searrow 0}S(\lambda)=1$, obtained in \cite{BGD88}
based on the double asymptotic expansion of the resolvent.
Finally, building on  \cite{Yaj20_1}, $\Lp$-boundedness for more general Schr\"odinger operators with threshold obstructions has been fully investigated in \cite{Yaj20_2}.

Our approach for obtaining Levinson's theorem as an index theorem in scattering theory
is based on a detailed study of the wave operators. Let us recall their definition, 
and refer to Section \ref{sec_prelim} for more details.
We consider the scattering system given by the pair of operators $(H,H_0)$, where $H_0$
is the Laplacian in the Hilbert space $\ltwo(\R^2)$ and $H:=H_0+V$ with $V$ a real
potential decaying rapidly at infinity. Under quite general conditions on $V$ it is
known that the wave operators
$$
W_\pm:=\slim_{t\to\pm\infty}\e^{itH}\e^{-itH_0}
$$
exist and are complete. In particular, they are Fredholm operators with no kernel
and with a cokernel given by the subspace spanned by the eigenfunctions
of $H$. This subspace is of finite dimension for sufficiently fast decaying potential.
Another important operator in this context is the scattering operator
defined by $S:=W_+^*W_-$. This operator is unitary in $\ltwo(\R^2)$. 
Since $S$ strongly commutes with $H_0$, the operator $S$
decomposes in the spectral representation of $H_0$.
Thus, if we denote by $\F_0$ the unitary map from $\ltwo(\R^2)$ to $\ltwo\big(\R_+;\ltwo(\S)\big)$ satisfying $\big(\F_0 H_0 f\big)(\lambda) = \lambda \big(\F_0f\big)(\lambda)$ for any $f$
in the domain of $H_0$, then one has
$\F_0 S \F_0^* = \{S(\lambda)\}_{\lambda \in \R_+}$, meaning that $S$ is unitarily
equivalent to a family of unitary operators $\{S(\lambda)\}_{\lambda\in\R_+}$ in
$\ltwo(\S)$. For historical reasons, the operator $S(\lambda)$ is called the
scattering matrix at energy $\lambda$, even though it acts on an infinite-dimensional
Hilbert space $\ltwo(\S)=:\hs$. 

By using the stationary representation of the wave operators, our first result is a new formula for the wave operator $W_-$. More precisely, for $V$ decaying fast enough, we show that the following equality holds:
\begin{equation*}
\F_0\big(W_--1\big)\F_0^* 
=   \big(\tfrac12\big(1-\tanh(\pi A_+)\big)\otimes 1_\hs\big)\big(S(L)-1\big) 
-N_2\;\! \Xi \;\!B  + K,
\end{equation*}
where $A_+$ corresponds to the generator of the dilation group in $\ltwo(\R_+)$, 
$S(L)$ denotes the operator of multiplication by the function $\lambda\mapsto S(\lambda)$,
and $K$ is a compact operator. So far, the product $N_2\;\! \Xi \;\!B$ of three bounded
operators is not really meaningful, but let us stress that this term is non-compact whenever
$H$ admits one or two $p$-resonances at $0$. Note that this formula for $W_-$ is at the root of the topological version of Levinson's theorem, and similar formulas have been obtained in several contexts, see for example \cite{BS,I,IT,IsR,NRT,PR_1,RT13,SB}.

In order to get a better understanding of the new term $N_2\;\! \Xi \;\!B$, 
a new representation is better suited. By conjugation with a suitable unitary rescaling,
the wave operator $W_-$ can be realised on $\ltwo\big(\R;\ltwo(\S)\big)$, and by the
decomposition into even and odd functions on $\R$, we end up studying the
wave operator in the Hilbert space $\ltwo\big(\R_+;\ltwo(\S)\big)^{2}$.  
In this representation, the operator $W_-$ takes the form
\begin{align}\label{eq:big}
\begin{split}
&
\left(\begin{smallmatrix} 
1 & 0 \\ 0 & 1
\end{smallmatrix}\right) 
+\tfrac{1}{2}
\left(\begin{smallmatrix} 
1 &  \tanh\big(\tfrac{\pi}{2}\sqrt{-\Delta_{\rm N}}\big) \phi(A_+) \\ 
\overline{\phi}(A_+)\tanh\big(\tfrac{\pi}{2}\sqrt{-\Delta_{\rm N}}\big)  & 1
\end{smallmatrix}\right) 
\left(\begin{smallmatrix} 
\tilde{S}_{\rm e}(L)-1 & \tilde{S}_{\rm o}(L)  \\ 
\tilde{S}_{\rm o}(L) & \tilde{S}_{\rm e}(L) - 1
\end{smallmatrix}\right) 
\\
& \qquad + \left(\begin{smallmatrix} 
(\tilde{N}_2)_{\rm e}(L) & (\tilde{N}_2)_{\rm o}(L)  \\ 
(\tilde{N}_2)_{\rm o}(L) & (\tilde{N}_2)_{\rm e}(L) 
\end{smallmatrix}\right)  
 \left(\begin{smallmatrix} 
\frac{2}{1+i2A_+} & 0  \\ 
0 & \frac{2}{1+i2A_+}
\end{smallmatrix}\right)  
\left(\begin{smallmatrix} 
\tilde{B}_{\rm e}(L) & \tilde{B}_{\rm o}(L)  \\ 
\tilde{B}_{\rm o}(L) & \tilde{B}_{\rm e}(L) 
\end{smallmatrix}\right)  + K,
\end{split}
\end{align}
with  $\phi(A_+):=-\tanh(\pi A_+)+ i \cosh(\pi A_+)^{-1}$, 
the indices ${\rm e}$ and ${\rm o}$ for the even or odd part of a function defined on $\R$, 
and the tilde functions meaning a rescaling, as for example $\tilde{S}(x):=S\big(\e^{-2x}\big)$
for any $x \in \R$. Again, the operator $K$ is a compact operator.
Let us emphasise some of the specific features of the previous formulas. It involves functions of three natural operators acting on $\ltwo(\R_+)$, namely the Neumann Laplacian $-\Delta_{\rm N}$, the operator $L$ of multiplication by the variable, and the generator $A_+$ of the unitary dilation group. In addition, it is shown in the following sections that all functions involved in this
expression are continuous functions having limits either at $\pm \infty$, or at $0$ and $+\infty$.

Obtaining formula \eqref{eq:big} involves purely analytical tools, starting from the asymptotic expansion of the resolvent provided by \cite{JN01}, and using various analytical tricks for 
studying the stationary formula for the wave operators.
Note that some of these tricks have been suggested by \cite{Yaj20_2}, even if the 
aims and the methods are different.
These investigations are presented in Section \ref{sec_waveop} and in the first half of
Section \ref{sec_rep}. The next key observation is that a $C^*$-algebra $\EE$ generated 
by functions  of the three generators mentioned above has been thoroughly studied
in \cite[Chap.~5]{Cordes}. In particular, a precise description of 
the quotient of this algebra by the set of compact operators is provided: 
the quotient consists of continuous functions defined on the edges of a hexagon
(this hexagon is illustrated in Section \ref{sec_Lev}).
By considering $M_2(\EE)$, the set of $2\times 2$ matrices with values in $\EE$, 
enlarging this algebra by a tensor product with $\K\big(\ltwo(\S)\big)$, and adding a unit,
one ends up with a $C^*$-algebra in which the expression \eqref{eq:big} for the wave operator is natural.

Once in this framework, the rest of the investigation is more algebraic, and is presented
in the second half of Section \ref{sec_rep} and in Section \ref{sec_Lev}.
It firstly consists in computing the image of \eqref{eq:big} in the quotient algebra. 
Since $W_-$ is a Fredholm operator, this image is given by a continuous function $\Gamma$
defined on the edges of the hexagon and taking unitary values in $\C+M_2\big(\K(\ltwo(\S))\big)$. This function is provided in Proposition \ref{prop:6c} and in Lemma \ref{lem:Gamma6}.

The operators $\tilde{N}_2$ and $\tilde{B}$ take
a much more explicit and interesting form in the quotient algebra: together they are the image of a 
projection $P_p$ which is directly linked with the $p$-resonance of $H$. 
Secondly, using a $K$-theoretic argument, the function $\Gamma$ can be linked
to the projection $E_{\rm p}(H)$ on the subspace spanned by the eigenvectors of $H$. 
This construction is presented in Section \ref{sec_Lev} and is based on a description of the
index map borrowed from \cite[Prop.~9.2.4.(ii)]{RLL}. Thirdly, by applying traces, one infers
a numerical equality. This equality involves the index of a Fredholm operator $W_S$ defined by
\begin{equation*}
W_S-1 =   \big(\tfrac12\big(1-\tanh(\pi A_+)\big)\otimes 1_\hs\big)\big(S(L)-1\big),
\end{equation*}
and the operator trace of the bound state projection $E_{\rm p}(H)$. It results in the following equality:
\begin{equation*}
\Index(W_S) + \dim(P_p) = -\# \sigma_{\rm p}(H).
\end{equation*}
Note that the dimension of $P_p$ corresponds to
the number of $p$-resonances, and is $1$ or $2$.
By taking care of the high energy behaviour of the scattering matrix, one finally deduces
the relation
\begin{equation}\label{eq:main_intro}
\frac{1}{2\pi i} \int_0^\infty \Tr\big(S(\lambda)^*S'(\lambda)\big) \, \d \lambda + \frac{1}{4\pi} \int_{\R^2} V(x)\, \d x + \dim(P_p) = -\# \sigma_{\rm p}(H).
\end{equation}
Simultaneously, we also determine the value of the spectral shift function at zero in the presence of $p$-resonances.
We refer to Section \ref{sec_Lev} for more precise statements.

The equality \eqref{eq:main_intro} 
confirms that each $p$-resonance provides a contribution of $1$ to Levinson's theorem, while the $s$-resonance does not provide any contribution. For comparison, let us recall the  version of 
Levinson's theorem obtained in \cite[Thm.~6.3]{BGD88} under the
assumption of exponential decay of the potential and the condition
$\int_{\R^2}V(x)\;\!\d x\ne0$.
In the framework of \cite{BGD88}, Levinson's theorem is expressed as
\begin{equation}\label{eq_eux}
\int_0^\infty\im\big((H-\lambda-i0)^{-1}-(H_0-\lambda-i0)^{-1}\big)\;\!\d\lambda
=-\pi N_-+\pi \;\!\Delta_{-1,-1}-\tfrac14\int_{\R^2}V(x)\;\!\d x,
\end{equation}
where $N_-$ is the number of strictly negative eigenvalues of $H$ and
$-\Delta_{-1,-1}$ is equal to the number of $0$-energy eigenvalues and $p$-resonances.
Taking into account the formal identity \cite[Eq.~(6.45)]{BGD88}:
$$
\im\Tr\big((H-\lambda-i0)^{-1}-(H_0-\lambda-i0)^{-1}\big)
=-\tfrac i2\tfrac\d{\d\lambda}\Tr\big(\ln(S(\lambda))\big),
$$
it follows that \eqref{eq_eux} corresponds to \eqref{eq:main_intro}.

Let us finally mention one main difference between \eqref{eq:main_intro} and \eqref{eq_eux}:
the contribution of the $p$-resonance is not on the same side of the equality, and the same remark
holds for the expression involving the integral of $V$. Our r.h.s.~term contains only (minus) the trace of $E_{\rm p}(H)$, which corresponds to the Fredholm index of $W_-$.
In our approach, the contribution of the $p$-resonance projection $P_p$ is coming from the function $\Gamma$
mentioned above, which describes the image of the wave operator $W_-$ under the quotient map.
In that respect, the $p$-resonance data has to stay on the same side as the scattering operator, which 
is also coming from $\Gamma$. On the other hand, the term involving the integral of $V$
is due to a regularization process for the computation of $\Index(W_S)$.
For that reason, it also has to stay on the l.h.s.~of the equality \eqref{eq:main_intro}.
Altogether, these contribution coming from scattering theory
are equal to the contribution due to index theory, namely (minus) the trace on $E_{\rm p}(H)$.
Even though $\dim(P_p)$ is an integer, moving it to the other side of the equality sign would remove
the topological character of this equality.
As said in the title: it was a surprise, it is now topological~!

\vspace{5mm}

\noindent
{\bf Notations:} $\N:=\{0,1,2,\ldots\}$ is the set of natural numbers, $\SS$ the
Schwartz space on $\R^2$, $\R_+:=(0,\infty)$, and
$\langle\cdot\rangle:=\sqrt{1+|\cdot|^2}$. The sets $\H^s_t$ are the weighted
Sobolev spaces over $\R^2$ with index $s\in\R$ for derivatives and index $t\in\R$ for
decay at infinity \cite[Sec.~4.1]{ABG}, and with shorthand notations $\H^s:=\H^s_0$,
$\H_t:=\H^0_t$, and $\H:=\H^0_0=\ltwo(\R^2)$. For any $s,t\in\R$, the $2$-dimensional
Fourier transform $\F$ is a topological isomorphism of $\H^s_t$ onto $\H^t_s$, and the
scalar product $\langle\cdot,\cdot\rangle_\H$ (antilinear in the first argument)
extends continuously to a duality $\langle\cdot,\cdot\rangle_{\H^s_t,\H^{-s}_{-t}}$
between $\H^s_t$ and $\H^{-s}_{-t}$. Given two Banach spaces $\G_1$ and $\G_2$,
$\B(\G_1,\G_2)$ (resp. $\K(\G_1,\G_2)$) denotes the set of bounded (resp. compact)
operators from $\G_1$ to $\G_2$, with shorthand notation $\B(\G_1):=\B(\G_1,\G_1)$
(resp. $\K(\G_1):=\K(\G_1,\G_1)$). Finally, $\otimes$ stands for the closed tensor
product of Hilbert spaces or the spatial tensor product of operators.

%--------------------------------------------------------------------------------------
\section{Preliminaries}\label{sec_prelim}
\setcounter{equation}{0}
%--------------------------------------------------------------------------------------

In this section, we briefly recall some notations and preliminary results
introduced in \cite[Sec.~2]{RTZ}.

\subsection{Free operator}

Set $\hs:=\ltwo(\S)$ and $\Hrond:=\ltwo(\R_+;\hs)$, and let $H_0$ be the (positive)
self-adjoint operator in $\H=\ltwo(\R^2)$ given by minus the Laplacian $-\Delta$ on
$\R^2$. Then, the unitary operator $\F_0:\H\to\Hrond$ defined by
\begin{equation}\label{eq_diag}
\big((\F_0 f)(\lambda)\big)(\omega)=2^{-1/2}(\F f)(\sqrt\lambda\;\!\omega),
\quad f\in\SS,~\lambda\in\R_+,~\omega\in\S,
\end{equation}
is a spectral transformation for $H_0$ in the sense that
$$
\big(\F_0H_0f\big)(\lambda)
=\lambda\;\!\big(\F_0 f\big)(\lambda)
=(L\F_0 f)(\lambda),
\quad\hbox{$f\in\H^2$, a.e. $\lambda\in\R_+$,}
$$
with $L$ the maximal multiplication operator by the variable $\lambda\in\R_+$ in
$\Hrond$. Moreover, for each $\lambda\in\R_+$, the operator $\F_0(\lambda):\SS\to\hs$
given by $\F_0(\lambda)f:=(\F_0f)(\lambda)$ extends to an element of $\B(\H^s_t,\hs)$
for any $s\in\R$ and $t>1/2$, and the function
$\R_+\ni\lambda\mapsto\F_0(\lambda)\in\B(\H^s_t,\hs)$ is continuous.

The asymptotic expansion of $\F_0(\lambda)$ as $\lambda\searrow0$ plays an
important role. By expanding the exponential
$\e^{-i\sqrt\lambda\omega\cdot x}$ in a Taylor series, one gets
\begin{equation}\label{eq_exp_F_0}
\F_0(\lambda)=\gamma_0+\sqrt\lambda\;\!\gamma_1+\lambda\gamma_2+o(\lambda),
\quad\lambda\in\R_+,
\end{equation}
with $\gamma_j:\SS\to\hs$ ($j=0,1,2$) the operator given by
$$
(\gamma_jf)(\omega)
:=\tfrac{(-i)^j}{2^{3/2}\pi\;\!(j!)}\int_{\R^2}\d x\,(\omega\cdot x)^j\;\!f(x),
\quad f\in\SS,~\omega\in\S.
$$
The operator $\gamma_j$ extends to an element of $\B(\H^s_t,\hs)$ for any
$s\in\R$ and $t>j+1$, which implies that the expansion \eqref{eq_exp_F_0} holds in
$\B(\H^s_t,\hs)$ as $\lambda\searrow0$ for any $s\in\R$ and $t>3$. We shall sometimes
use the abbreviated notation $\gamma_2(\lambda)$, or $O(\lambda)$, for the sum
$\lambda\gamma_2+o(\lambda)$ in \eqref{eq_exp_F_0}.

\subsection{Perturbed operator}

Let us now consider a potential $V\in\linf(\R^2;\R)$ satisfying for some $\rho>1$ the
bound
\begin{equation}\label{eq_cond_V}
|V(x)|\le{\rm Const.}\;\!\langle x\rangle^{-\rho},\quad\hbox{a.e. $x\in\R^2$.}
\end{equation}
Then, the perturbed Hamiltonian $H:=H_0+V$ is a short range perturbation of $H_0$, and
it is known that the corresponding wave operators
\begin{equation}\label{eq:wavet}
W_\pm:=\slim_{t\to\pm\infty}\e^{itH}\e^{-itH_0}
\end{equation}
exist and are complete. As a consequence, the scattering operator $S:=W_+^*W_-$ is
unitary in $\H$. Now, define for $z\in\C\setminus\R$ the resolvents of $H_0$ and $H$
$$
R_0(z):=(H_0-z)^{-1}\quad\hbox{and}\quad R(z):=(H-z)^{-1}.
$$
In order to recall properties of $R_0(z)$ and $R(z)$ as $z$ approaches the real axis,
it is convenient to decompose the potential $V$ according to the following rule: for
a.e. $x\in\R^2$ set
\begin{equation}\label{eq:uv}
v(x):=|V(x)|^{1/2}
\quad\hbox{and}\quad
u(x):=
\begin{cases}
+1 & \hbox{if $V(x)\ge0$}\\
-1 & \hbox{if $V(x)<0$,}
\end{cases}
\end{equation}
so that $u$ is self-adjoint and unitary and $V=uv^2$. Then, using the fact that $H$
does not have any positive eigenvalues \cite[Sec.~1]{Kat59} and that a limiting absorption
principle holds for $H_0$ and $H$ \cite[Thm.~4.2]{Agm75}, we infer that the limits
$$
vR_0(\lambda\pm i0)v:=\lim_{\varepsilon\searrow0}vR_0(\lambda\pm i\varepsilon)v
\quad\hbox{and}\quad
vR(\lambda\pm i0)v:=\lim_{\varepsilon\searrow0}vR(\lambda\pm i\varepsilon)v,
$$
exist in $\B(\H)$ and are continuous in the variable $\lambda\in\R_+$. This, together
with the relation
$$
u-uvR(\lambda\pm i\varepsilon)vu=\big(u+vR_0(\lambda\pm i\varepsilon)v\big)^{-1},
\quad\lambda\in\R_+,~\varepsilon>0,
$$
implies the existence and the continuity of the function
$\R_+\ni\lambda\mapsto(u+vR_0(\lambda\pm i0)v)^{-1}\in\B(\H)$. Furthermore, one has
$\lim_{\lambda\to\infty}(u+vR_0(\lambda\pm i0)v)^{-1}=u$ in $\B(\H)$, since
$\lim_{\lambda\to\infty}vR_0(\lambda+i0)v=0$ in $\B(\H)$ \cite[Prop.~7.1.2]{Yaf10}. On
the other hand, the existence in $\B(\H)$ of the limits
$\lim_{\lambda\searrow0}(u+vR_0(\lambda\pm i0)v)^{-1}$ depends on the presence or
absence of eigenvalues or resonances at $0$-energy. This problem has been studied in
detail in \cite{JN01} in dimensions $1$ and $2$. We recall here the main result in
dimension $2$ \cite[Thm.~6.2(ii)]{JN01}: Take $\kappa\in\C^*$ with $\re(\kappa)\ge0$,
let $\eta:=1/\ln(\kappa)$ (with $\ln$ the principal value of the complex logarithm),
and set
$$
\M(\kappa):=u+vR_0(-\kappa^2)v.
$$
Then, if $V$ satisfies \eqref{eq_cond_V} with $\rho>11$ and if $0<|\kappa|<\kappa_0$
with $\kappa_0>0$ small enough, the operator $\M(\kappa)^{-1}$ admits an expansion
\begin{equation}\label{eq_JN}
\M(\kappa)^{-1}
=I_1(\kappa)-g(\kappa)I_2(\kappa)-\tfrac{g(\kappa)\eta}{\kappa^2}\;\!I_3(\kappa),
\end{equation}
with
\begin{align*}
I_1(\kappa)&:=(\M(\kappa)+S_1)^{-1},\\
I_2(\kappa)&:=(\M(\kappa)+S_1)^{-1}S_1(M_1(\kappa)+S_2)^{-1}S_1
(\M(\kappa)+S_1)^{-1},\\
I_3(\kappa)&:=(\M(\kappa)+S_1)^{-1}S_1(M_1(\kappa)+S_2)^{-1}S_2
\big(T_3m(\kappa)^{-1}T_3-T_3 m(\kappa)^{-1}b(\kappa)d(\kappa)^{-1}S_3\nonumber\\
&\quad-S_3d(\kappa)^{-1}c(\kappa)m(\kappa)^{-1}T_3+S_3d(\kappa)^{-1}c(\kappa)
m(\kappa)^{-1}b(\kappa)d(\kappa)^{-1}S_3+S_3d(\kappa)^{-1}S_3\big)\nonumber\\
&\quad\cdot S_2(M_1(\kappa)+S_2)^{-1}S_1(\M(\kappa)+S_1)^{-1},
\end{align*}
and where $S_1\ge S_2\ge S_3$ are orthogonal projections in $\H$, $T_3:=S_2-S_3$,
$g:\C\to\C$ satisfies $g(\kappa)=O(\eta^{-1})$ for $0<|\kappa|<\kappa_0$,
$m:\C\to\B(\H)$ satisfies $m(\kappa)=O(\eta^{-1})$ for $0<|\kappa|<\kappa_0$, and all
other factors are operator-valued functions having limits in $\B(\H)$ as $\kappa\to0$.

One of the initial tasks in \cite{RTZ} has been to provide the expansion near $0$
of an 
operator related to $\M(\kappa)$ which plays an important role for the stationary expression
of the wave operators.
The statement is recalled below, with the convention that $\lambda >0$, 
$\kappa:=-i\sqrt\lambda$ which means that 
$\eta=\tfrac1{\ln(\lambda)/2-i\pi/2}$.

\begin{Theorem}[Thm.~4.7 of \cite{RTZ}]\label{thm_asym_new}
If $V$ satisfies \eqref{eq_cond_V} with $\rho>11$, then one has as $\lambda\searrow0$
\begin{align}\label{eq:asympt}
&\nonumber \big(u+vR_0(\lambda+ i0)v\big)^{-1}v\F_0(\lambda)^* \\
\nonumber & =\tfrac{g(\kappa)\eta}{\sqrt\lambda}\big(T_3-S_3d(\kappa)^{-1}c(\kappa)\big)
m(\kappa)^{-1}T_3v\gamma_1^*+\tfrac1\eta S_3\;\!O(1)+O(1) \\
\nonumber & = \tfrac{\eta}{\sqrt\lambda}S_2\big(T_3-S_3d(\kappa)^{-1}c(\kappa)\big)
g(\kappa)m(\kappa)^{-1}T_3v\gamma_1^*+\tfrac1\eta S_3\;\!O(1)+O(1) \\
& = S_2\Big( \tfrac{\eta}{\sqrt\lambda}\big(T_3-S_3d(\kappa)^{-1}c(\kappa)\big)
g(\kappa)m(\kappa)^{-1}T_3v\gamma_1^*+\tfrac1\eta S_3\;\!O(1)+O(1)
\Big)  + S_2^\bot O(1).
\end{align}
\end{Theorem}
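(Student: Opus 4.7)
The plan is to combine the Jensen--Nenciu resolvent expansion \eqref{eq_JN} for $\M(\kappa)^{-1}$ with the low-energy expansion \eqref{eq_exp_F_0} of $\F_0(\lambda)$, writing $v\F_0(\lambda)^* = v\gamma_0^* + \sqrt{\lambda}\,v\gamma_1^* + O(\lambda)$ in $\B(\hs,\H)$ (valid by the decay hypothesis $\rho > 11$, which makes $v\gamma_j^*$ bounded for $j=0,1,2$). Since $\kappa = -i\sqrt{\lambda}$ and hence $\kappa^2 = -\lambda$, the prefactor $-g(\kappa)\eta/\kappa^2$ in \eqref{eq_JN} becomes $g(\kappa)\eta/\lambda$, and pairing this with the $\sqrt{\lambda}\,\gamma_1^*$ contribution produces the announced leading factor $g(\kappa)\eta/\sqrt{\lambda}$.

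The terms then split into three blocks. The block $I_1(\kappa) v \F_0(\lambda)^*$ has a limit in $\B(\hs,\H)$ as $\lambda\searrow 0$ and contributes $O(1)$. The block $g(\kappa) I_2(\kappa) v \F_0(\lambda)^*$ is of order $|g(\kappa)| = O(\eta^{-1})$ and its range lies in $S_1\H$; splitting $S_1 = S_2 + (S_1 - S_2)$, one distributes this contribution between the $\tfrac{1}{\eta} S_3 O(1) + O(1)$ term and the $S_2^\bot O(1)$ term. The main work concerns the block $(g(\kappa)\eta/\lambda)\, I_3(\kappa) v \F_0(\lambda)^*$. The inner bracketed operator in $I_3(\kappa)$ factorises as
\[
\bigl(T_3 - S_3 d(\kappa)^{-1} c(\kappa)\bigr) m(\kappa)^{-1} \bigl(T_3 - b(\kappa) d(\kappa)^{-1} S_3\bigr) + S_3 d(\kappa)^{-1} S_3,
\]
so that, using the Jensen--Nenciu threshold identity $S_3 v \gamma_1^* = 0$ (first-moment vanishing for genuine $0$-energy eigenfunctions), only the product $\bigl(T_3 - S_3 d(\kappa)^{-1} c(\kappa)\bigr) m(\kappa)^{-1} T_3 v \gamma_1^*$ survives in the $\sqrt{\lambda}\,v\gamma_1^*$ subterm. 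The outer factor $(\M(\kappa)+S_1)^{-1}S_1 (M_1(\kappa)+S_2)^{-1}$ converges on $S_2\H$ to the identity, modulo $O(\eta^{-1})$ corrections absorbed into the remainder; after collecting $g(\kappa)\eta/\sqrt{\lambda}$ one arrives at the first line of \eqref{eq:asympt}. The $v\gamma_0^*$ subterm, a priori of order $g(\kappa)\eta/\lambda$, is controlled by a parallel identity annihilating $T_3 v\gamma_0^*$; only the $S_3 d(\kappa)^{-1} S_3 v\gamma_0^*$ piece remains, and a power count shows it contributes exactly $\tfrac{1}{\eta} S_3 O(1) + O(1)$.

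The second equality in \eqref{eq:asympt} is then purely algebraic: the scalar $g(\kappa)$ commutes with the operator factors, and prepending $S_2$ on the far left is redundant since $S_2 T_3 = T_3$ and $S_2 S_3 = S_3$. The third equality makes the decomposition $1 = S_2 + S_2^\bot$ explicit, grouping all the $S_2^\bot$-parts of the three blocks into a single $S_2^\bot O(1)$ term on the right. The main obstacle will be the bookkeeping of the threshold identities for $v\gamma_j^*$ against the projections $S_1, S_2, S_3$, together with the simultaneous tracking of the logarithmic $\eta$-rates of the operator-valued coefficients $m(\kappa)^{-1}$, $d(\kappa)^{-1}$, $b(\kappa)$, $c(\kappa)$, $g(\kappa)$: the apparent $g(\kappa)\eta/\lambda$ divergence must be tamed by the projection structure to yield the stated $\tfrac{1}{\eta}S_3 O(1) + O(1)$ error, and this is where the subtlety of the 2D threshold analysis is concentrated.
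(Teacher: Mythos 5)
The paper itself offers no proof of this statement: it is imported verbatim as Theorem~4.7 of \cite{RTZ}, so there is no in-paper argument to compare yours against and I can only judge the reconstruction on its own terms. Your skeleton is the natural one and partly correct: substituting \eqref{eq_JN} into the left-hand side, expanding $v\F_0(\lambda)^*$ via \eqref{eq_exp_F_0}, noting $-g(\kappa)\eta/\kappa^2=g(\kappa)\eta/\lambda$, factoring the inner bracket of $I_3(\kappa)$ as $\big(T_3-S_3d(\kappa)^{-1}c(\kappa)\big)m(\kappa)^{-1}\big(T_3-b(\kappa)d(\kappa)^{-1}S_3\big)+S_3d(\kappa)^{-1}S_3$ (this factorization is correct), and invoking the threshold identities $S_2v\gamma_0^*=0$ and $S_3v\gamma_1^*=0$ is indeed how such an expansion must be organised.

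There are, however, genuine gaps in the execution. First and most seriously, you apply the identities $S_3v\gamma_1^*=0$ and $T_3v\gamma_0^*=0$ as though the inner bracket of $I_3(\kappa)$ acted directly on $v\gamma_j^*$; it does not. Between the bracket and $v\F_0(\lambda)^*$ sit the factors $S_2(M_1(\kappa)+S_2)^{-1}S_1(\M(\kappa)+S_1)^{-1}$, and the projections must be commuted through the $\kappa$- and $\eta$-expansions of these resolvent-type factors before any vanishing identity can bite; this requires the detailed internal structure of $(\M(\kappa)+S_1)^{-1}$ and $(M_1(\kappa)+S_2)^{-1}$ from \cite{JN01}, not just their boundedness, and it is precisely where the content of the theorem lies. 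A symptom that this cannot be waved away: applied \emph{directly}, $S_3d(\kappa)^{-1}S_3v\gamma_0^*$ would vanish (since $S_3\le S_2$ and $S_2v\gamma_0^*=0$), yet the theorem asserts a genuinely divergent $\tfrac1\eta S_3O(1)$ contribution, which can only arise from the intervening factors you suppress. Second, the claim that the outer left factor equals the identity on $S_2\H$ ``modulo $O(\eta^{-1})$ corrections absorbed into the remainder'' does not close as an order count: the inner term it multiplies has size $O(\eta/\sqrt\lambda)$ (this is the size of the leading term, cf.\ the proof of Lemma \ref{lem:B3}), so even an $O(\eta)$ relative correction yields $O(\eta^2/\sqrt\lambda)$, which is unbounded and not of the form $\tfrac1\eta S_3O(1)+O(1)$ unless one exhibits an $S_3$ prefactor or extra powers of $\kappa$ in the correction. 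Third, the range of $I_2(\kappa)$ is not contained in $S_1\H$ (its leftmost factor is $(\M(\kappa)+S_1)^{-1}$, not $S_1$), and $g(\kappa)I_2(\kappa)$ is a priori $O(\eta^{-1})$; ``distributing'' this divergence between $\tfrac1\eta S_3O(1)$ and the \emph{bounded} term $S_2^\bot O(1)$ presupposes exactly the cancellation that has to be proved. In short, the strategy is right but the proof, as written, assumes the hard part.
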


In part of the analysis performed in \cite{RTZ} the assumption $T_3=0$ was imposed.
In this case, the main singularity in this expansion disappears, and only the milder
singular term $\tfrac1\eta S_3\;\!O(1) + O(1)$ remains. Note that the factor $S_3$
is associated with $0$-energy bound states, while the projection $T_3$ is related
to the so called $p$-resonances. In the sequel, we shall remove the assumption that
$T_3=0$.

%--------------------------------------------------------------------------------------
\section{Stationary expression for the wave operators}\label{sec_waveop}
\setcounter{equation}{0}
%--------------------------------------------------------------------------------------

In this section, we start by recalling the stationary expression for the wave operator $W_-$.
We then decompose this expression into smaller pieces, which will be analysed separately.
This analysis is taking place in the spectral representation of $H_0$, namely in the space
$\Hrond$.

Since the subsequent developments are based on the asymptotic expansions
provided in \eqref{eq_JN} and in \eqref{eq:asympt}, which hold under the assumption
 \eqref{eq_cond_V} with $\rho>11$, we shall assume this decay in the rest of the paper,
without repeating it. 
Under this assumption, the wave operators \eqref{eq:wavet} obtained by the time-dependent approach and those described by the time-independent approach coincide \cite[Thm.~5.3.6]{Yaf92}.
For suitable $\varphi, \psi \in \Hrond=\ltwo(\R_+;\hs)$, we recall the stationary expression 
for $W_-$, namely
\begin{align}\label{eq:wo}
\nonumber &\big\langle\F_0\big(W_--1\big)\F_0^*\varphi,
\psi\big\rangle_\Hrond \\
&=-\int_\R\d\lambda\,\lim_{\varepsilon\searrow0}\int_0^\infty\d\mu\,
\big\langle\F_0(\mu)v {\mathbf 1}\big(u+vR_0(\lambda+i\varepsilon)v\big)^{-1}v\F_0^*
\delta_\varepsilon(L-\lambda)\varphi,(\mu-\lambda+i\varepsilon)^{-1}
\psi(\mu)\big\rangle_\hs,
\end{align}
where
$$
\delta_\varepsilon(L-\lambda)
:=\tfrac\varepsilon\pi(L-\lambda+i\varepsilon)^{-1}(L-\lambda-i\varepsilon)^{-1}.
$$

Note that we have artificially inserted an identity operator ${\mathbf 1}$ in the above expression.
Indeed, this identity operator can be rewritten as 
$$
{\mathbf 1} =S_2 + S_2^\bot,
$$
which then provides two expressions from \eqref{eq:wo}. The one with $S_2^\bot$ does
not contain any singularity at $0$-energy, and has been thoroughly studied in \cite{RTZ}.

We shall now carefully decouple the low energy and the high energy parts of this expression. 
For that purpose, let us fix $\varrho\in C(\R_+;[0,1])$ with 
\begin{equation*}
\varrho(\lambda)=\begin{cases}0 & 
\lambda<\frac{1}{4}\\ 1 &\lambda>\frac{3}{4}.\end{cases}
\end{equation*} 
The function $1-\varrho:\R_+\to [0,1]$ is denoted by $\varrho^\bot$. 
We shall also use two auxiliary functions $\varrho_1 \in  C(\R_+;[0,1])$ and $\varrho_0 \in  C(\R_+;[0,1])$ with 
\begin{equation*}
\varrho_1(\lambda)=\begin{cases}0 &\lambda<\frac{1}{8}\\
1&\lambda>\frac{1}{4}\end{cases}\quad\mbox{and}\quad 
\varrho_0(\lambda)=\begin{cases}1&\lambda<\frac{3}{4}\\ 0&\lambda>\frac{7}{8}.\end{cases}
 \end{equation*}
As a consequence, $\varrho \varrho_1 = \varrho$ and that $\varrho^\bot \varrho_0 = \varrho^\bot$.
Finally, for $\varphi \in \Hrond$ and $\varepsilon>0$ we also set
$$
\tilde\varphi_\varepsilon(\lambda):=S_2\big(u+vR_0(\lambda+i\varepsilon)v\big)^{-1}v\F_0^*
\delta_\varepsilon(L-\lambda)\varrho^\bot(L)\varphi.
$$ 
Then the term with the factor $S_2$ can be rewritten as
\begin{align*}
& \int_\R\d\lambda\,\lim_{\varepsilon\searrow0}\int_0^\infty\d\mu\,
\big\langle\F_0(\mu)v (\mu-\lambda-i\varepsilon)^{-1} S_2\big(u+vR_0(\lambda+i\varepsilon)v\big)^{-1}v\F_0^*
\delta_\varepsilon(L-\lambda)\varphi,
\psi(\mu)\big\rangle_\hs \\
= 
& \int_0^\infty\d\lambda\,\lim_{\varepsilon\searrow0}\int_0^\infty\d\mu\,
\big\langle\F_0(\mu)v S_2(\mu-\lambda-i\varepsilon)^{-1} S_2\big(u+vR_0(\lambda+i\varepsilon)v\big)^{-1}v\F_0^*
\delta_\varepsilon(L-\lambda)\varrho(L)\varphi,
\psi(\mu)\big\rangle_\hs \\
&+  \int_0^\infty\d\lambda\,\lim_{\varepsilon\searrow0}\int_0^\infty\d\mu\,
\big\langle\F_0(\mu)v S_2\varrho^\bot(\mu)
\frac{1}{\sqrt{\mu}}\big(\sqrt{\mu}-\sqrt{\lambda}+\sqrt{\lambda}\big) (\mu-\lambda-i\varepsilon)^{-1} \tilde\varphi_\varepsilon(\lambda),
\psi(\mu)\big\rangle_\hs \\ 
& +  \int_0^\infty\d\lambda\,\lim_{\varepsilon\searrow0}\int_0^\infty\d\mu\,
\big\langle\F_0(\mu)v S_2\varrho(\mu)
\frac{1}{\mu}\big(\mu-\lambda +\lambda\big)
(\mu-\lambda-i\varepsilon)^{-1} \tilde\varphi_\varepsilon(\lambda),
\psi(\mu)\big\rangle_\hs  \\
=  & \underbrace{\int_0^\infty\d\lambda\,\lim_{\varepsilon\searrow0}\int_0^\infty\d\mu\,
\big\langle\F_0(\mu)v S_2(\mu-\lambda-i\varepsilon)^{-1} S_2\big(u+vR_0(\lambda+i\varepsilon)v\big)^{-1}v\F_0^*
\delta_\varepsilon(L-\lambda)\varrho(L)\varphi,
\psi(\mu)\big\rangle_\hs}_{R_0} \\
& + \underbrace{\int_0^\infty\d\lambda\,\lim_{\varepsilon\searrow0}\int_0^\infty\d\mu\,
\big\langle\F_0(\mu)v S_2\varrho^\bot(\mu)
\frac{1}{\sqrt{\mu}}\big(\sqrt{\mu}-\sqrt{\lambda}\big) (\mu-\lambda-i\varepsilon)^{-1} \tilde\varphi_\varepsilon(\lambda),
\psi(\mu)\big\rangle_\hs}_{R_1} \\ 
& + \underbrace{\int_0^\infty\d\lambda\,\lim_{\varepsilon\searrow0}\int_0^\infty\d\mu\,
\big\langle\F_0(\mu)v S_2\varrho^\bot(\mu)
\frac{1}{\sqrt{\mu}} (\mu-\lambda-i\varepsilon)^{-1} \;\!\sqrt{\lambda}\;\! \tilde\varphi_\varepsilon(\lambda),
\psi(\mu)\big\rangle_\hs}_{R_2} \\ 
& + \underbrace{\int_0^\infty\d\lambda\,\lim_{\varepsilon\searrow0}\int_0^\infty\d\mu\,
\big\langle\F_0(\mu)v S_2\varrho(\mu)
\frac{1}{\mu}\;\!(\mu-\lambda)\;\!
(\mu-\lambda-i\varepsilon)^{-1} \tilde\varphi_\varepsilon(\lambda),
\psi(\mu)\big\rangle_\hs}_{R_3}  \\
& + \underbrace{\int_0^\infty\d\lambda\,\lim_{\varepsilon\searrow0}\int_0^\infty\d\mu\,
\big\langle\F_0(\mu)v S_2\varrho(\mu)
\frac{1}{\mu} (\mu-\lambda-i\varepsilon)^{-1} \;\!\lambda \;\!\tilde\varphi_\varepsilon(\lambda),
\psi(\mu)\big\rangle_\hs}_{R_4}. \\
\end{align*}

These various terms will be treated below. For now, 
let us observe  that for any $\varphi \in C_{\rm c}(\R_+;\hs)$ and any $\lambda>0$, 
the following limit exists in $\H$, as a consequence of \cite[Lem.~2.3]{RT13}~:
$$
\slim_{\varepsilon \searrow 0} \tilde \varphi_\varepsilon(\lambda)
=\varrho^\bot(\lambda)\;\!S_2\big(u+vR_0(\lambda+i0)v\big)^{-1}v\F_0(\lambda)^*\varphi(\lambda).
$$
It is then natural to consider the subsequent operator-valued function of $\lambda$ and study its behaviour for
$\lambda \searrow 0$.
The following statement is mainly a consequence of the expansion \eqref{eq:asympt} and the properties of the function $\varrho^\bot$. 

\begin{Lemma}\label{lem:B3}
The following map is continuous and bounded:
\begin{equation}\label{def:B3}
\R_+\ni \lambda \mapsto
B(\lambda):=\varrho^\bot (\lambda) \sqrt{\lambda}\ln(\lambda)S_2 \Big[\big(u+vR_0(\lambda+ i0)v\big)^{-1}v\F_0(\lambda)^*
\Big] \in \K(\hs,\H).
\end{equation}
The multiplication operator $B:C_{\rm c}(\R_+;\hs)\to\ltwo(\R_+;\H)$ given
by $(B\varphi)(\lambda):=B(\lambda)\varphi(\lambda)$ for
$\varphi\in C_{\rm c}(\R_+;\hs)$ and $\lambda\in\R_+$, extends continuously to an
element of $\B\big(\Hrond,\ltwo(\R_+;\H)\big)$.
\end{Lemma}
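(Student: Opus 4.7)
The function $\varrho^\bot$ vanishes for $\lambda>3/4$, so $B(\lambda)\equiv 0$ there. On any compact subset of $(0,\infty)$ the continuity and compactness of $B(\lambda)$ are routine: the scalar factor $\varrho^\bot(\lambda)\sqrt{\lambda}\ln(\lambda)$ is continuous, the limiting absorption principle recalled in Section \ref{sec_prelim} gives continuity of $\lambda\mapsto(u+vR_0(\lambda+i0)v)^{-1}$ in $\B(\H)$, and $\lambda\mapsto v\F_0(\lambda)^*$ is continuous into $\K(\hs,\H)$ because $\F_0(\lambda)^*$ sends $\hs$ into a weighted Sobolev space on which multiplication by $v$ (which decays faster than $\langle x\rangle^{-11/2}$) yields a compact map into $\H$. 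The fixed projection $S_2$ affects none of this.

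The core of the proof is the behaviour at the threshold $\lambda\searrow 0$, where we invoke the asymptotic expansion \eqref{eq:asympt} of Theorem~\ref{thm_asym_new}. Since $S_2S_2^\bot=0$ and the projections $T_3$, $S_3$ sit below $S_2$, multiplying the last chained equality in \eqref{eq:asympt} by $S_2$ on the left collapses to
\begin{equation*}
S_2\big(u+vR_0(\lambda+i0)v\big)^{-1}v\F_0(\lambda)^*
=\tfrac{\eta}{\sqrt{\lambda}}\big(T_3-S_3d(\kappa)^{-1}c(\kappa)\big)g(\kappa)m(\kappa)^{-1}T_3v\gamma_1^*+\tfrac{1}{\eta}S_3\cdot O(1)+S_2\cdot O(1),
\end{equation*}
where the operator-valued factors other than the explicit scalar singularities admit limits in $\B(\H)$ as $\kappa\to 0$. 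The prefactor $\sqrt{\lambda}\ln(\lambda)$ is exactly calibrated to this: the scalar identity $\eta\ln(\lambda)=\ln(\lambda)/\big(\tfrac{1}{2}\ln(\lambda)-i\tfrac{\pi}{2}\big)\to 2$ converts the $\eta/\sqrt{\lambda}$ singularity into a finite nonzero limit, while both $\sqrt{\lambda}\ln(\lambda)/\eta=\sqrt{\lambda}\ln(\lambda)\big(\tfrac{1}{2}\ln(\lambda)-i\tfrac{\pi}{2}\big)\to 0$ and $\sqrt{\lambda}\ln(\lambda)\to 0$. Hence $B(\lambda)$ extends continuously to $\lambda=0$ with limit $2\big(T_3-S_3d(0)^{-1}c(0)\big)(gm^{-1})(0)\,T_3v\gamma_1^*$.

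Compactness of this limit is immediate: $\gamma_1$ has rank two since its range lies in the span of $\omega\mapsto\omega_j$ ($j=1,2$), so $v\gamma_1^*$ is a finite-rank map from $\hs$ into $\H$ (the weight $\langle x\rangle^{-\rho/2}$ with $\rho>11$ making $v(x)x_j$ square-integrable). Combined with the earlier analysis on $(0,3/4]$ and the vanishing on $[3/4,\infty)$, we conclude $B\in C(\R_+;\K(\hs,\H))$ with $\sup_{\lambda\in\R_+}\|B(\lambda)\|_{\B(\hs,\H)}<\infty$; the bounded-multiplication-operator extension from $\Hrond$ to $\ltwo(\R_+;\H)$ is then automatic, with norm bounded by this supremum. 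The only real obstacle in the whole argument is the bookkeeping: one has to recognise that the weight $\sqrt{\lambda}\ln(\lambda)$ is calibrated precisely so that the crucial identity $\eta\ln(\lambda)\to 2$ captures the leading singular term, while every subleading contribution comes with an extra factor $\sqrt{\lambda}(\ln\lambda)^k$ ($k\ge 1$) that tends to $0$.
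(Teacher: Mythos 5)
Your proof is correct and takes essentially the same route as the paper: continuity and compactness away from the threshold follow from the facts recalled in Section \ref{sec_prelim}, the cut-off $\varrho^\bot$ kills everything above $3/4$, and near $0$ the expansion \eqref{eq:asympt} combined with $g(\kappa)=O(\eta^{-1})$, $m(\kappa)^{-1}=O(\eta)$ and $\eta\ln(\lambda)\to 2$ gives boundedness. The only caveat is that your stronger claim that $B(\lambda)$ converges as $\lambda\searrow0$ to a limit involving $(gm^{-1})(0)$ presupposes that the product $g(\kappa)m(\kappa)^{-1}$ actually converges --- the $O$-estimates alone only give that it is bounded, and the convergence is established only later in Section \ref{sec_rep} --- but for the lemma as stated boundedness suffices and your argument delivers it.
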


\begin{proof}
The continuity of the functions $\lambda\mapsto B(\lambda)$ follows from 
the continuities already mentioned in Section \ref{sec_prelim}.
For the behavior of this function near $0$, we can use Theorem \ref{thm_asym_new}
and replace the term inside the square bracket by the r.h.s.~of \eqref{eq:asympt}. Then, by recalling that $g(\kappa)=O(\eta^{-1})$ and 
$m(\kappa)^{-1}=O(\eta)$ as $\lambda\searrow0$, one infers that the first term behaves as $O\big(\big(\sqrt{\lambda}\ln(\lambda)\big)^{-1}\big)$ in the limit $\lambda\searrow0$.
The factor $\sqrt{\lambda} \ln(\lambda)$ makes this product bounded in the limit $\lambda \searrow 0$. For $\lambda>\frac{3}{4}$, the function $\lambda \mapsto B(\lambda)$ vanishes, since the factor $\varrho^\bot(\lambda)$ has this property. 
The rest of the statement is a direct consequence of boundedness of the map $\lambda \mapsto B(\lambda)$.
\end{proof}

Let us now provide the analysis of the terms $R_0$ to $R_4$ which appear
in our study of the stationary expression for the wave operator $W_-$.
An important role is played by the generator of dilations, which we now describe. We recall
that the dilation group $\{U^+_t\}_{t\in\R}$ in $\ltwo(\R_+)$, with self-adjoint
generator $A_+$, is given by $(U^+_t\varphi)(\lambda):=\e^{t/2}\varphi(\e^t\lambda)$
for $\varphi\in C_{\rm c}(\R_+)$, $\lambda\in\R_+$ and $t\in\R$. Using results of \cite{Jen81}, we can take functions of the operator $A_+$ via the formula
\begin{align}\label{eq:functional-calculus}
[\varphi(A_+) f](x) &= (2\pi)^{-\frac12} \int_{-\infty}^\infty{(\F_1^* \varphi)(t)\;\! [U^+_t f](x) \, \d t},
\end{align}
with $\F_1$ the usual Fourier transform on $\ltwo(\R)$ and $\varphi, f \in C_{\rm c}^\infty(\R)$.
We shall also use the function $\vartheta: \R\to \R$ given for any $s\in \R$ by
\begin{equation}\label{eq:vartheta}
\vartheta(s):=\tfrac12\big(1-\tanh(\pi s)\big).
\end{equation}

\begin{Lemma}\label{lem:R0}
The term $R_0$ can be rewritten as $\big\langle2\pi iN_0\big(\vartheta(A_+)\otimes 1_\H\big)B_0\varphi,\psi\big\rangle_\Hrond$, with $N_0$ and $B_0$
defined below in \eqref{def:N0} and in \eqref{def:B0} respectively.
\end{Lemma}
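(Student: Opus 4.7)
The plan is to take the $\varepsilon\searrow 0$ limit inside the integrals defining $R_0$, factor the resulting expression into three natural pieces, and recognise the middle (scalar) piece as a function of the dilation generator $A_+$. Concretely, since $\varrho$ keeps $\lambda$ uniformly bounded away from the threshold $0$, the continuity of the map $\lambda\mapsto\big(u+vR_0(\lambda+i0)v\big)^{-1}$ on the support of $\varrho$ (recalled in Section \ref{sec_prelim}) lets me replace $\big(u+vR_0(\lambda+i\varepsilon)v\big)^{-1}$ by its boundary value and reduce the factor $\F_0^*\;\!\delta_\varepsilon(L-\lambda)\varrho(L)\varphi$ to $\F_0(\lambda)^*\varrho(\lambda)\varphi(\lambda)$ after integration in $\lambda$. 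With multiplication operators $B_0$ and $N_0$ whose fibres are natural extensions of
\begin{equation*}
B_0(\lambda):=\varrho(\lambda)\;\!S_2\big(u+vR_0(\lambda+i0)v\big)^{-1}v\F_0(\lambda)^*\quad\text{and}\quad
N_0(\mu):=\F_0(\mu)\;\!v\;\!S_2,
\end{equation*}
everything apart from the scalar Cauchy kernel $(\mu-\lambda-i0)^{-1}$ is then packaged into $N_0$ acting after $B_0$.

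The next step is to identify the operator with kernel $(\mu-\lambda-i0)^{-1}$ on $\ltwo(\R_+)$ as $2\pi i\;\!\vartheta(A_+)$. The change of variable $\mu=\mathrm{e}^t\lambda$ recasts the Cauchy operator as
\begin{equation*}
\lim_{\varepsilon\searrow 0}\int_0^\infty\frac{f(\mu)}{\mu-\lambda-i\varepsilon}\;\!\d\mu
=\int_{-\infty}^\infty\frac{\mathrm{e}^{t/2}}{\mathrm{e}^t-1-i0}\;\![U^+_tf](\lambda)\;\!\d t,
\end{equation*}
which, in view of the functional calculus formula \eqref{eq:functional-calculus}, represents $\phi(A_+)f$ with $\phi$ characterised by $\F_1^*\phi(t)=(2\pi)^{1/2}\mathrm{e}^{t/2}/(\mathrm{e}^t-1-i0)$. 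A short contour-integration computation (equivalently, the table integral $\int_0^\infty u^{-\alpha}/(u-1-i0)\;\!\d u$ evaluated at $\alpha=\tfrac12-is$) then identifies $\phi(s)=2\pi i\;\!\vartheta(s)$ with $\vartheta$ as in \eqref{eq:vartheta}. Tensoring against $1_\H$ to accommodate the operator-valued nature of $N_0$ and $B_0$ gives the claimed form $2\pi i\;\!N_0\big(\vartheta(A_+)\otimes 1_\H\big)B_0$.

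The main obstacle will be justifying the interchange of the limit and integrations, since $(\mu-\lambda-i0)^{-1}$ exists only as a distribution on $\R_+\times\R_+$. I plan to perform the $\varepsilon\searrow 0$ limit first against the smooth factor $B_0$ (which is continuous and bounded on $\R_+$ thanks to $\varrho$), so that the Cauchy kernel acts only on the resulting bounded output; this parallels the treatment of analogous Cauchy terms in \cite{RT13} and \cite{RTZ}. The support properties of $\varrho$ ensure that no threshold singularities of $(u+vR_0(\lambda+i0)v)^{-1}$ enter the computation, and a variant of the uniform bound in Lemma \ref{lem:B3} (with $\varrho$ replacing $\varrho^\bot$) provides the required control to extend the identity from test vectors $\varphi,\psi\in C_{\rm c}(\R_+;\hs)$ to all of $\Hrond$. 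Once the Cauchy kernel is replaced by $2\pi i\;\!\vartheta(A_+)\otimes 1_\H$, the scalar product $\langle\cdot,\cdot\rangle_\Hrond$ assembles the three factors in the stated order.
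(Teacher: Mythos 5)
Your proposal is correct and follows essentially the same route as the paper: the paper's proof likewise introduces the multiplication operators $N_0$ and $B_0$ of \eqref{def:N0} and \eqref{def:B0}, checks that they extend to bounded operators, and then reduces $R_0$ to $2\pi i\;\!N_0\big(\vartheta(A_+)\otimes 1_\H\big)B_0$ by invoking the argument of \cite[Thm.~2.5]{RT13_2}, whose content is exactly the dominated-convergence justification and the Mellin-type identification of the Cauchy kernel with $2\pi i\;\!\vartheta(A_+)$ that you spell out. One point to double-check is the orientation of the Cauchy kernel: in $R_0$ the integration is over $\lambda$ (the variable carried by $\delta_\varepsilon(L-\lambda)\varrho(L)\varphi$) with output variable $\mu$, so the relevant scalar operator is $g\mapsto\int_0^\infty(\mu-\lambda-i0)^{-1}g(\lambda)\,\d\lambda$, whereas your displayed identity integrates $f(\mu)$ over $\mu$ with output $\lambda$; since the two kernels sum to $2\pi i$ times the identity, the transposed operator you wrote down evaluates (with your table integral at $\alpha=\tfrac12-is$) to $i\pi\big(1+\tanh(\pi s)\big)=2\pi i\;\!\vartheta(-s)$ rather than $2\pi i\;\!\vartheta(s)$ under a fixed choice of Fourier and dilation conventions. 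With the kernel oriented as it actually appears in $R_0$, your contour computation does land on $\vartheta$ as in \eqref{eq:vartheta}, and the remainder of the argument (dominated convergence on the dense set, then extension by the uniform bounds on $N_0$ and $B_0$) is exactly the paper's.
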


\begin{proof}
Let us firstly define the map
\begin{equation}\label{def:N0}
\R_+\ni\mu\mapsto N_0(\mu):=\F_0(\mu)vS_2\in\K(\H,\hs).
\end{equation}
It is easy to check that this function is continuous, admits a limit as 
$\mu\searrow0$, and vanishes as $\mu\to\infty$,
see \cite[Lem.~4.9(a)]{RTZ} and its proof for a similar statement.
The multiplication operator $N_0:C_{\rm c}(\R_+;\H)\to\Hrond$ given
by $(N_0\xi)(\mu):=N_0(\mu)\xi(\mu)$ for $\xi\in C_{\rm c}(\R_+;\H)$ and
$\mu\in\R_+$, extends then continuously to an element of
$\B\big(\ltwo(\R_+;\H),\Hrond\big)$. 

Secondly, similar arguments to those of Lemma \ref{lem:B3} show that the map
\begin{equation}\label{def:B0}
\R_+\ni\lambda\mapsto B_0(\lambda)
:= \varrho(\lambda) S_2\big(u+vR_0(\lambda+i0)v\big)^{-1}v\F_0(\lambda)^*\in\K(\hs,\H).
\end{equation}
is continuous and bounded. As a consequence, the multiplication operator $B_0: C_{\rm c}(\R_+;\hs)\to\ltwo(\R_+;\H)$ given
by $(B_0\varphi)(\lambda):=B_0(\lambda)\varphi(\lambda)$ for
$\varphi\in C_{\rm c}(\R_+;\hs)$ and $\lambda\in\R_+$, extends continuously to an
element of $\B\big(\Hrond,\ltwo(\R_+;\H)\big)$.

Then, by considering $\varphi, \psi \in C^\infty_{\rm c}(\R_+)\odot C(\S)$, 
we can prove as in \cite[Thm.~2.5]{RT13_2} that the expression $R_0$ given by
\begin{equation*}
\int_0^\infty\d\lambda\,\lim_{\varepsilon\searrow0}\int_0^\infty\d\mu\,
\big\langle\F_0(\mu)v S_2 (\mu-\lambda-i\varepsilon)^{-1} S_2\big(u+vR_0(\lambda+i\varepsilon)v\big)^{-1}v\F_0^*
\delta_\varepsilon(L-\lambda)\varrho(L)\varphi,
\psi(\mu)\big\rangle_\hs
\end{equation*}
reduces to 
$$
\big\langle2\pi iN_0\big(\vartheta(A_+)\otimes 1_\H\big)B_0\varphi,\psi\big\rangle_\Hrond.
$$
Note that the key argument in the proof is an application of Lebesgue's dominated convergence theorem,
as shown in the proof of \cite[Thm.~2.5]{RT13_2}.
\end{proof}

For the next statement, recall that $\varrho$ is a localization near $\infty$ while
$\varrho_0$ is a localization near $0$, see Section \ref{sec_waveop}.

\begin{Lemma}\label{lem:R3}
The term $R_3$ can be rewritten as $\langle K\varphi,\psi\rangle_\Hrond$, 
with $K\in \K(\Hrond)$.
\end{Lemma}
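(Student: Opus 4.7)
The plan is to compute the $\varepsilon\searrow 0$ limit explicitly and to show that the resulting operator factors through $\H$ as a Hilbert-Schmidt operator. The distinguishing feature of $R_3$ compared with $R_0$ is the factor $(\mu-\lambda)$ in the numerator: it cancels the singularity of $(\mu-\lambda-i\varepsilon)^{-1}$, so that no non-trivial function of $A_+$ remains in the limit, only a compact remainder.

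First, I would unpack $\tilde\varphi_\varepsilon(\lambda)$ by writing $\F_0^*\delta_\varepsilon(L-\lambda)\varrho^\bot(L)\varphi=\int\!\d\mu'\,\delta_\varepsilon(\mu'-\lambda)\varrho^\bot(\mu')\F_0(\mu')^*\varphi(\mu')$, apply Fubini to exchange the $\lambda,\mu,\mu'$ integrations, and then take $\varepsilon\searrow 0$. Fubini is justified by the uniform bound $|(\mu-\lambda)(\mu-\lambda-i\varepsilon)^{-1}|\le 1$, the boundedness of $\|\F_0(\cdot)v\|$, and Theorem~\ref{thm_asym_new} (which gives $\ltwo$-integrability near $0$ of $\|(u+vR_0(\lambda+i0)v)^{-1}v\F_0(\lambda)^*\|$, whose dominant singularity is $O(1/(\sqrt{\lambda}|\ln\lambda|))$). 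Performing the $\lambda$-integration against $\delta_\varepsilon(\mu'-\lambda)$ and using the pointwise limit $(\mu-\lambda)(\mu-\lambda-i\varepsilon)^{-1}\to 1$ for $\mu\ne\lambda$, together with the continuity of $\lambda\mapsto(u+vR_0(\lambda+i0)v)^{-1}$ recalled in Section \ref{sec_prelim}, dominated convergence yields $R_3=\langle K\varphi,\psi\rangle_\Hrond$ with
\[
(K\varphi)(\mu):=N_3(\mu)\,\widetilde{B}\varphi,\quad N_3(\mu):=\tfrac{\varrho(\mu)}{\mu}\F_0(\mu)vS_2,\quad \widetilde{B}\varphi:=\int_0^\infty\varrho^\bot(\mu')\big(u+vR_0(\mu'+i0)v\big)^{-1}v\F_0(\mu')^*\varphi(\mu')\,\d\mu'.
\]

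Second, I would prove compactness by writing $K=\mathcal{N}_3\widetilde{B}$, where $\mathcal{N}_3:\H\to\Hrond$ is the operator $(\mathcal{N}_3\eta)(\mu):=N_3(\mu)\eta$. The map $\widetilde{B}:\Hrond\to\H$ is bounded by Cauchy-Schwarz, since $\mu'\mapsto\varrho^\bot(\mu')\|(u+vR_0(\mu'+i0)v)^{-1}v\F_0(\mu')^*\|$ belongs to $\ltwo(\R_+)$ (via Theorem~\ref{thm_asym_new} and the cut-off at infinity provided by $\varrho^\bot$). For $\mathcal{N}_3$, the Schwartz kernel of $\F_0(\mu)v$ is $2^{-1/2}(2\pi)^{-1}\e^{-i\sqrt{\mu}\,\omega\cdot x}v(x)$, so $\|\F_0(\mu)v\|_{\text{HS}}^2=(4\pi)^{-1}\|v\|_{\ltwo(\R^2)}^2$ uniformly in $\mu$; combined with $\int_0^\infty\varrho(\mu)^2/\mu^2\,\d\mu<\infty$, this gives $\|\mathcal{N}_3\|_{\text{HS}}^2<\infty$, so $\mathcal{N}_3$ is Hilbert-Schmidt. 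Thus $K=\mathcal{N}_3\widetilde{B}$ is Hilbert-Schmidt, in particular compact.

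The main obstacle is the rigorous justification of the limit $\varepsilon\searrow 0$: since $\tilde\varphi_\varepsilon(\lambda)$ contains the singular $\delta_\varepsilon(L-\lambda)$ and is not uniformly bounded in $\ltwo(\R_+,\d\lambda)$, one cannot simply commute $\lim_{\varepsilon\searrow 0}$ with $\int\!\d\lambda$. The key is to perform the $\lambda$-integration \emph{first}, collapsing $\lambda$ onto $\mu'$ through $\delta_\varepsilon(\mu'-\lambda)$, rather than attempting to commute $\lim_{\varepsilon\searrow 0}$ past $\int\!\d\lambda$. After this reordering, the remaining integrand on $\R_+\times\R_+$ is uniformly dominated in $\varepsilon$, and the standard dominated-convergence argument of \cite[Thm.~2.5]{RT13_2} and \cite[Lem.~4.9]{RTZ} applies.
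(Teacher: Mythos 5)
Your proof is correct, but it organises the compactness argument differently from the paper. The paper keeps everything in the multiplication-operator framework used for the other terms $R_j$: it peels off the scalar kernel $\Theta_\varepsilon(\mu,\lambda)=\varrho(\mu)\mu^{-1}(\mu-\lambda)(\mu-\lambda-i\varepsilon)^{-1}\varrho_0(\lambda)\big(\sqrt{\lambda}\ln(\lambda)\big)^{-1}$, shows it is Hilbert--Schmidt on $\ltwo(\R_+)$ uniformly in $\varepsilon$ and converges in Hilbert--Schmidt norm to the rank-one kernel $\Theta_0(\mu,\lambda)=\frac{\varrho(\mu)}{\mu}\frac{\varrho_0(\lambda)}{\sqrt{\lambda}\ln(\lambda)}$, and then sandwiches $(1\otimes S_2)(\Theta_0\otimes 1_\H)$ between the bounded multiplication operators $N_0$ of \eqref{def:N0} and $B$ of \eqref{def:B3}; compactness comes from $S_2$ being finite rank together with $\Theta_0$ being Hilbert--Schmidt, and the uniform Hilbert--Schmidt bound on $\Theta_\varepsilon$ is what supplies the domination for the $\varepsilon$-limit. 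You instead factor the limiting operator through $\H=\ltwo(\R^2)$ as $\mathcal N_3\widetilde B$, getting compactness from the explicit Hilbert--Schmidt kernel of $\F_0(\mu)v$ and the decay of $\varrho(\mu)/\mu$; this makes no use of the finite rank of $S_2$ and even yields that $K$ is Hilbert--Schmidt, which is a slightly stronger conclusion. The one place your write-up wobbles is the treatment of the $\varepsilon\searrow 0$ limit: in the stationary formula the limit sits \emph{between} $\int\d\lambda$ and $\int\d\mu$, so ``performing the $\lambda$-integration first'' already presupposes the very interchange you say you are avoiding. The cleaner route (consistent with the paper's reference to \cite[Thm.~2.5]{RT13_2}) is to use, for fixed $\lambda$, the strong convergence of $\tilde\varphi_\varepsilon(\lambda)$ together with $|(\mu-\lambda)(\mu-\lambda-i\varepsilon)^{-1}|\le 1$ to pass to the limit in the inner $\mu$-integral, and then integrate in $\lambda$ using the $O\big((\sqrt{\lambda}|\ln\lambda|)^{-1}\big)$ bound from Theorem \ref{thm_asym_new}, which is square-integrable near $0$ as you note. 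Since all the dominating functions you need are already in your argument, this is a presentational rather than a substantive gap.
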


\begin{proof}
For $\mu, \lambda>0$ and $\varepsilon>0$ we consider the kernel
$$
\Theta_\varepsilon(\mu,\lambda):=\varrho(\mu)\frac{1}{\mu}\;\!(\mu-\lambda)\;\! (\mu-\lambda-i\varepsilon)^{-1}\frac{1}{\sqrt{\lambda}\;\!\ln(\lambda)}\varrho_0(\lambda).
$$
This function defines a Hilbert-Schimdt operator in $\ltwo(\R_+)$ with Hilbert-Schmidt norm satisfying
\begin{equation*}
\|\Theta_\varepsilon\|_{\mathrm{HS}}^2
=\int_0^\infty \d \mu\int_0^\infty \d \lambda |\Theta_\varepsilon(\mu,\lambda)|^2
\leq \int_{\frac{1}{4}}^\infty \d \mu\;\! \Big(\frac{\varrho(\mu)}{\mu}\Big)^2 \int_0^\frac{7}{8} \d\lambda \;\!\Big(\frac{\varrho_0(\lambda)}{\sqrt{\lambda}\;\!\ln(\lambda)}\Big)^2 
\end{equation*}
with an upper bound independent of $\varepsilon$. By an application of the dominated
convergence theorem, one then infers that $\Theta_\varepsilon$
converges in the Hilbert-Schmidt norm to $\Theta_0$ defined 
for $\mu,\lambda>0$ by
$\Theta_0(\mu,\lambda):=\frac{\varrho(\mu)}{\mu}\;\!\frac{\varrho_0(\lambda)}{\sqrt{\lambda}\;\!\ln(\lambda)}$.

By an application of the Lebesgue dominated convergence theorem, one obtains 
as in the previous proof that for $\varphi, \psi \in C^\infty_{\rm c}(\R_+)\odot C(\S)$,
the expression $R_3$ given by 
$$
\int_0^\infty\d\lambda\,\lim_{\varepsilon\searrow0}\int_0^\infty\d\mu\,
\big\langle\F_0(\mu)v S_2\varrho(\mu)
\frac{1}{\mu}\;\!(\mu-\lambda)\;\!
(\mu-\lambda-i\varepsilon)^{-1} \tilde\varphi_\varepsilon(\lambda),
\psi(\mu)\big\rangle_\hs
$$
reduces to 
$$
\big\langle N_0(1_{\ltwo(\R_+)}\otimes S_2) (\Theta_0\otimes 1_\H) B \varphi,\psi \big\rangle_\Hrond,
$$
with $N_0$ defined in \eqref{def:N0} and with $B$ defined in \eqref{def:B3}.
As $S_2$ is finite rank, the product $(1_{\ltwo(\R_+)}\otimes S_2) (\Theta_0\otimes 1_\H)$ corresponds to a compact operator on $\ltwo(\R_+;\H)$. By multiplying this factor by the bounded operators $B$ on the right and $N_0$ on the left, one obtains an element of $\K(\Hrond)$. 
\end{proof}

\begin{Lemma}\label{lem:R4}
The term $R_4$ can be rewritten as $\big\langle2\pi i N_4\big(\vartheta(A_+)\otimes 1_\H\big)
\big(M_4\otimes 1_\H\big)\;\!B\varphi,\psi\big\rangle_\Hrond$,
with $B$ defined in \eqref{def:B3},
and $M_4$ and $N_4$ bounded multiplication operators defined in proof.
\end{Lemma}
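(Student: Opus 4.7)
The plan is to follow the same template used in the proof of Lemma \ref{lem:R0}: identify the scalar/operator-valued multiplication factors on the $\mu$- and $\lambda$-sides that can be pulled outside the Cauchy kernel, and then use the kernel-to-$\vartheta(A_+)$ identification from \cite[Thm.~2.5]{RT13_2} to rewrite what remains as $2\pi i\,\vartheta(A_+)$ acting on the central $\ltwo(\R_+)$ component.

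The key algebraic observation that dictates the shape of $M_4$ is that, by the definition \eqref{def:B3} of $B$,
\[
\lambda\,\slim_{\varepsilon\searrow 0}\tilde\varphi_\varepsilon(\lambda)
= \lambda\,\varrho^\bot(\lambda)\,S_2\,\big(u+vR_0(\lambda+i0)v\big)^{-1}v\,\F_0(\lambda)^*\varphi(\lambda)
= \frac{\sqrt\lambda}{\ln(\lambda)}\,B(\lambda)\,\varphi(\lambda).
\]
This suggests the choices
\[
M_4(\lambda):=\frac{\sqrt\lambda}{\ln(\lambda)},\qquad N_4(\mu):=\F_0(\mu)\,v\,S_2\,\frac{\varrho(\mu)}{\mu},
\]
so that $M_4$ absorbs the extra $\lambda$-factor and $N_4$ absorbs the $\varrho(\mu)/\mu$ on the $\mu$-side. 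I would then verify that $N_4$ and $M_4$ induce bounded multiplication operators: for $N_4$, the boundedness of $\F_0(\cdot)vS_2$ together with its limits at $0$ and at $\infty$ (already established in the proof of Lemma \ref{lem:R0}) combine with $\varrho(\mu)/\mu$ (vanishing near $0$, decaying like $1/\mu$ at infinity) to give a bounded element of $\B(\ltwo(\R_+;\H),\Hrond)$; for $M_4$, one only has to observe that $B(\lambda)$ is supported in $(0,3/4]$, where $\sqrt\lambda/\ln(\lambda)$ is continuous and bounded and in fact vanishes at $0$.

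With these factors identified, I would pull $N_4(\mu)$ out on the left of the $(\mu-\lambda-i\varepsilon)^{-1}$ kernel and rewrite $\lambda\,\tilde\varphi_\varepsilon(\lambda)$ by means of the identity above. The residual combination of $(\mu-\lambda-i\varepsilon)^{-1}$ with the approximate delta $\delta_\varepsilon(L-\lambda)$, after taking $\varepsilon\searrow 0$, then produces exactly the operator $2\pi i\,\vartheta(A_+)$ acting between the $\lambda$- and $\mu$-variables, by the same argument used for $R_0$.

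The main obstacle I expect is the rigorous interchange of the $\varepsilon\searrow 0$ limit with the double integration. As in the proof of Lemma \ref{lem:R0}, this goes through a Lebesgue dominated convergence argument, for which one needs a uniform (in $\varepsilon$) integrable majorant built from the growth/decay bounds on $N_4$, $M_4$, and $B$. The extra factor of $\lambda$ present here (compared with the $R_0$ term) is in fact harmless once $M_4$ has been extracted, since $M_4(\lambda)$ vanishes as $\lambda\searrow 0$. Collecting the three factors yields the identity
\[
R_4 = \big\langle 2\pi i\,N_4\,\big(\vartheta(A_+)\otimes 1_\H\big)\big(M_4\otimes 1_\H\big)\,B\,\varphi,\psi\big\rangle_\Hrond
\]
first for $\varphi,\psi\in C_{\rm c}^\infty(\R_+)\odot C(\S)$, and then on all of $\Hrond$ by density.
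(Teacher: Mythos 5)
Your proposal follows essentially the same route as the paper: extract $N_4(\mu)=\F_0(\mu)vS_2\varrho(\mu)\mu^{-1}$ on the $\mu$-side, absorb the extra factor of $\lambda$ into a scalar multiplier $M_4$ acting after $B$, and convert the residual Cauchy kernel into $2\pi i\,\vartheta(A_+)$ via the dominated-convergence argument of \cite[Thm.~2.5]{RT13_2}; your algebraic identity $\lambda\,\tilde\varphi_\varepsilon(\lambda)\to\frac{\sqrt\lambda}{\ln(\lambda)}B(\lambda)\varphi(\lambda)$ is exactly the mechanism the paper uses. The one defect is your definition $M_4(\lambda):=\sqrt\lambda/\ln(\lambda)$: this function is singular at $\lambda=1$ (where $\ln(\lambda)=0$) and unbounded as $\lambda\to\infty$, so it does not define a bounded multiplication operator on $\ltwo(\R_+;\H)$, which the lemma requires. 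Checking boundedness only on the support of $B$ is not enough, since $M_4\otimes 1_\H$ appears as a standalone factor in the stated formula. The paper sets $M_4(\lambda):=\varrho_0(\lambda)\sqrt\lambda/\ln(\lambda)$; because $\varrho^\bot\varrho_0=\varrho^\bot$, inserting the cutoff $\varrho_0$ leaves the product $(M_4\otimes 1_\H)B$ unchanged while making $M_4$ a genuine element of $\linf(\R_+)$. With that one-line correction your argument is complete.
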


\begin{proof}
For $\mu>0$ we set 
\begin{equation}\label{eq:N4}
N_4(\mu):=N_0(\mu)\varrho(\mu)\frac{1}{\mu}.
\end{equation} 
This operator valued function is also bounded, vanishes as $\mu\searrow 0$,  and satisfies $\lim_{\mu\to \infty}N_4(\mu)=0$.
For $\lambda>0$ let us also set 
\begin{equation*}
M_4(\lambda):=\varrho_0(\lambda)\sqrt{\lambda}\frac{1}{\ln(\lambda)}.
\end{equation*} 
This scalar function is clearly bounded, vanishes for $\lambda>\frac{7}{8}$, and satisfies $\lim_{\lambda \searrow 0}N_4(\lambda)=0$. 
The corresponding bounded multiplication operators are denoted by $N_4$ and $M_4$, respectively.

Then, by considering $\varphi, \psi \in C^\infty_{\rm c}(\R_+)\odot C(\S)$, 
we can prove as in \cite[Thm.~2.5]{RT13_2} that the expressions $R_4$ given by
$$
\int_0^\infty\d\lambda\,\lim_{\varepsilon\searrow0}\int_0^\infty\d\mu\,
\big\langle\F_0(\mu)v S_2\varrho(\mu)
\frac{1}{\mu} (\mu-\lambda-i\varepsilon)^{-1} \;\!\lambda \;\!\tilde\varphi_\varepsilon(\lambda),
\psi(\mu)\big\rangle_\hs
$$
reduces to 
$$
\big\langle2\pi iN_4\big(\vartheta(A_+)\otimes 1_\H\big)
\big(M_4\otimes 1_\H\big)\;\!B\varphi,\psi\big\rangle_\Hrond,
$$
with $B$ defined in \eqref{def:B3}.
\end{proof}

\begin{Lemma}\label{lem:R2}
The term $R_2$ can be rewritten as
$\big\langle 2\pi i N_2\;\!\big(\vartheta(A_+)\otimes 1_\H\big)
\big(M_2\otimes 1_\H\big)\;\!B\varphi, \psi\big\rangle_\Hrond$
with $B$ defined in \eqref{def:B3},
and $M_2$ and $N_2$ bounded multiplication operators defined in proof.
\end{Lemma}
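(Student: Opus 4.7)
The argument follows the blueprint of Lemmas \ref{lem:R0}--\ref{lem:R4}: I factor the scalar kernel $\mu^{-1/2}(\mu-\lambda-i\varepsilon)^{-1}\sqrt\lambda$ in $R_2$ into an outgoing factor absorbed into a multiplication operator $N_2$, an incoming scalar factor absorbed into $M_2$, and a residual kernel which, together with the Dirac-sequence $\delta_\varepsilon(L-\lambda)$ already hidden inside $\tilde\varphi_\varepsilon$, delivers the operator $2\pi i\,\vartheta(A_+)\otimes 1_\H$ through the dilation identity of \cite[Thm.~2.5]{RT13_2}. Concretely I set
\begin{equation*}
N_2(\mu) := \F_0(\mu)\,v\,S_2\,\varrho^\bot(\mu)\,\mu^{-1/2}, \qquad M_2(\lambda) := \varrho_0(\lambda)/\ln(\lambda), \qquad \mu,\lambda\in\R_+.
\end{equation*}

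For $M_2$, the cutoff $\varrho_0$ is supported in $(0,7/8]$, on which $|\ln(\lambda)|\ge|\ln(7/8)|>0$, so $M_2$ is continuous and bounded and extends to a multiplication operator in $\B(\ltwo(\R_+))$. Since $\varrho_0\equiv 1$ on the support $\{\lambda\le 3/4\}$ of $\varrho^\bot$, combining the strong limit of $\tilde\varphi_\varepsilon(\lambda)$ recalled just before Lemma \ref{lem:B3} with the definition \eqref{def:B3} of $B$ yields the pointwise identity $\slim_{\varepsilon\searrow 0}\sqrt\lambda\,\tilde\varphi_\varepsilon(\lambda)=M_2(\lambda)B(\lambda)\varphi(\lambda)$. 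For $N_2$, the cutoff $\varrho^\bot$ kills $N_2(\mu)$ for $\mu>3/4$ and continuity of $N_0(\mu)=\F_0(\mu)vS_2$ settles the interior. The only delicate point is the singular factor $\mu^{-1/2}$ as $\mu\searrow 0$, and this is neutralised by the cancellation $\gamma_0\,v\,S_2=0$, which is built into the Jensen--Nenciu construction in two dimensions: the projection $S_2$ is precisely the part of $S_1$ on which the $s$-wave functional $\gamma_0 v$ vanishes (see \cite[Sec.~6]{JN01}). Together with the expansion \eqref{eq_exp_F_0} this gives $N_2(\mu)=\gamma_1\,v\,S_2+O(\sqrt\mu)$ as $\mu\searrow 0$, so $N_2$ extends continuously to an element of $\B(\ltwo(\R_+;\H),\Hrond)$ with values in $\K(\H,\hs)$ (compactness being inherited from $N_0$).

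With both factors in place, take $\varphi,\psi\in C^\infty_{\rm c}(\R_+)\odot C(\S)$ and recognise the integrand of $R_2$ as $\langle N_2(\mu)(\mu-\lambda-i\varepsilon)^{-1}\sqrt\lambda\,\tilde\varphi_\varepsilon(\lambda),\psi(\mu)\rangle_\hs$. Interchanging the $\varepsilon\searrow 0$ limit with the $\mu$-integration by Lebesgue dominated convergence---the uniform bounds on $N_2$, $M_2$ and $B$ furnish the integrable majorants---the remaining scalar kernel $(\mu-\lambda-i\varepsilon)^{-1}\delta_\varepsilon(L-\lambda)$ is exactly the one analysed in \cite[Thm.~2.5]{RT13_2} and converges to $2\pi i\,\vartheta(A_+)$ on $\ltwo(\R_+)$, sandwiched on the left by $N_2$ and on the right by $(M_2\otimes 1_\H)B$. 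Assembling the pieces yields the claimed identity. The main technical hurdle is the cancellation $\gamma_0\,v\,S_2=0$, without which the $\mu^{-1/2}$ factor would make $N_2$ unbounded near zero; everything else is bookkeeping inherited from the preceding three lemmas.
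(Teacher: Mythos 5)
Your proposal is correct and follows essentially the same route as the paper: the same definitions $M_2(\lambda)=\varrho_0(\lambda)/\ln(\lambda)$ and $N_2(\mu)=\F_0(\mu)\varrho^\bot(\mu)\mu^{-1/2}vS_2$, the same use of the cancellation $\gamma_0 vS_2=0$ together with the expansion \eqref{eq_exp_F_0} to control $N_2$ near $\mu=0$ (the paper cites \cite[Lem.~3.2(b)]{RTZ} for this), and the same reduction of the kernel $(\mu-\lambda-i\varepsilon)^{-1}\delta_\varepsilon(L-\lambda)$ to $2\pi i\,\vartheta(A_+)$ via dominated convergence and \cite[Thm.~2.5]{RT13_2}. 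Your added observation that $\varrho_0\varrho^\bot=\varrho^\bot$ makes $M_2(\lambda)B(\lambda)$ reproduce $\sqrt\lambda\,\tilde\varphi_\varepsilon(\lambda)$ in the limit is exactly the bookkeeping the paper leaves implicit.
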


\begin{proof}
For $\lambda>0$ let us set $M_2(\lambda):=\varrho_0(\lambda)\frac{1}{\ln(\lambda)}$. This scalar function is clearly bounded, vanishes for $\lambda>\frac{7}{8}$, and satisfies $\lim_{\lambda \searrow 0}M_2(\lambda)=0$. The corresponding bounded multiplication operator in $\ltwo(\R_+)$ are denoted by $M_2$.
Let us also define the map
\begin{equation}\label{eq:N2}
\R_+\ni \mu \mapsto N_2(\mu):= \F_0(\mu) \varrho^\bot(\mu)\frac{1}{\sqrt{\mu}} v S_2 \in \B(\H,\hs).
\end{equation}
Clearly, this map vanishes as $\mu\to \infty$. By the expansion \eqref{eq_exp_F_0} 
together with the algebraic cancellation
$\gamma_0 v S_2=0$, as shown in \cite[Lem.~3.2(b)]{RTZ}, one infers that 
this map  admits a limit as $\mu\searrow 0$. As a consequence, 
the operator valued multiplication operator $N_2:C_{\rm c}(\R_+;\H)\to\Hrond$ given
by $(N_2\xi)(\mu):=N_2(\mu)\xi(\mu)$ for $\xi\in C_{\rm c}(\R_+;\H)$ and
$\mu\in\R_+$, extends then continuously to an element of
$\B\big(\ltwo(\R_+;\H),\Hrond\big)$. 

Then, by considering $\varphi, \psi \in C^\infty_{\rm c}(\R_+)\odot C(\S)$, 
we can prove as in \cite[Thm.~2.5]{RT13_2} that the expressions $R_2$ given by
$$
\int_0^\infty\d\lambda\,\lim_{\varepsilon\searrow0}\int_0^\infty\d\mu\,
\big\langle\F_0(\mu)v S_2\varrho^\bot(\mu)
\frac{1}{\sqrt{\mu}} (\mu-\lambda-i\varepsilon)^{-1} \;\!\sqrt{\lambda}\;\! \tilde\varphi_\varepsilon(\lambda),
\psi(\mu)\big\rangle_\hs
$$
reduces to 
$$
\big\langle 2\pi i N_2\;\!\big(\vartheta(A_+)\otimes 1_\H\big)
\big(M_2\otimes 1_\H\big)\;\!B\varphi, \psi\big\rangle_\Hrond
$$
with $B$ defined in \eqref{def:B3}.
\end{proof}

For the study of the term $R_1$ we need some preparatory results. For that purpose,
let us consider the unitary transformation $\U:\ltwo(\R_+)\to \ltwo(\R)$ defined for 
$f\in \ltwo(\R_+)$ and $x\in \R$ by
\begin{equation}\label{eq:U}
[\U f](x):=\sqrt{2}\e^{-x}f\big(\e^{-2x}\big).
\end{equation}
We also introduce the integral operator $\Xi: \ltwo(\R_+)\to \ltwo(\R_+)$ with kernel
given by 
\begin{equation}\label{eq:Xi}
\Xi(\mu,\lambda):=\varrho_0(\mu)\;\!\frac{1}{\sqrt{\mu}+\sqrt{\lambda}}\;\! \frac{1}{\sqrt{\lambda}\;\!\ln(\lambda)}\;\!\varrho_0(\lambda).
\end{equation}
We can then express $\Xi$ in terms of dilations and position operators.
\begin{Lemma}\label{Lem:Unitary-kernel}
The following equality holds in $\ltwo(\R)$:
$$
\U\;\!\Xi\;\!\U^*
=-\varrho_0\big(\e^{-2X})\;\!\frac{2}{1+i2A}\;\!
\varrho_0\big(\e^{-2X}) + K_1
$$
with $A$ the generator of dilation in $\ltwo(\R)$, $X$ the operator by multiplication
by the variable in $\ltwo(\R)$, with remainder $K_1\in \K\big(\ltwo(\R)\big)$.
\end{Lemma}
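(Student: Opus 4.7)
The strategy is to compute the integral kernels of both sides on $\ltwo(\R)$ and to show that their difference is Hilbert--Schmidt, hence compact.

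\emph{Kernel of $\U\Xi\U^*$.} Since $\U$ is a unitary rescaling, it sends an integral operator on $\ltwo(\R_+)$ with kernel $T(\mu,\lambda)$ to one on $\ltwo(\R)$ with kernel $2e^{-x-y}T(e^{-2x},e^{-2y})$. Applying this to the expression \eqref{eq:Xi} for $\Xi$ and using the substitutions $\sqrt{\mu}=e^{-x}$, $\sqrt{\lambda}=e^{-y}$, $\ln(\lambda)=-2y$, a direct simplification yields
\begin{equation*}
K(x,y) \;=\; -\,\frac{\varrho_0(e^{-2x})\,\varrho_0(e^{-2y})}{y\,(1+e^{x-y})}.
\end{equation*}
The cutoffs $\varrho_0(e^{-2x}),\varrho_0(e^{-2y})$ restrict the support to $x,y\ge c$ for some $c>0$, so in particular $1/y$ is bounded there.

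\emph{Kernel of the target operator.} The operator $\tfrac{2}{1+2iA_+}$ on $\ltwo(\R_+)$ is identified via a Mellin-type (or functional-calculus) argument as the integral operator $g \mapsto \int_\cdot^\infty g(\mu)/\mu\,d\mu$, i.e. with kernel $\mathbf{1}_{\mu>\lambda}/\mu$. Transferring through $\U$ by the same kernel formula, the operator $\tfrac{2}{1+2iA} = \U\,\tfrac{2}{1+2iA_+}\,\U^*$ sandwiched by $\varrho_0(e^{-2X})$ is then seen to have integral kernel $\varrho_0(e^{-2x})\varrho_0(e^{-2y})\mathbf{1}_{y>x}/y$. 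So the candidate principal term of $K(x,y)$ is $-\varrho_0(e^{-2x})\varrho_0(e^{-2y})\mathbf{1}_{y>x}/y$.

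\emph{Compactness of the remainder.} The elementary bound
\begin{equation*}
\Bigl|\tfrac{1}{1+e^{x-y}} - \mathbf{1}_{y>x}\Bigr| \;\le\; e^{-|x-y|}, \qquad x,y\in\R,
\end{equation*}
together with $1/y \le 1/c$ on the support of the cutoffs, bounds the remainder kernel pointwise by a constant multiple of $e^{-|x-y|}\,y^{-1}\,\varrho_0(e^{-2x})\varrho_0(e^{-2y})$. Its Hilbert--Schmidt norm squared factorises under the substitution $u = x-y$ as $\int_\R e^{-2|u|}\,du \cdot \int_c^\infty y^{-2}\,dy$, both finite. The difference is therefore a Hilbert--Schmidt operator, yielding the claimed $K_1\in\K\big(\ltwo(\R)\big)$.

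The hardest step is the second one: identifying $\tfrac{2}{1+2iA_+}$ as the concrete integral operator with kernel $\mathbf{1}_{\mu>\lambda}/\mu$ on $\ltwo(\R_+)$. This requires diagonalising $A_+$ (via the unitary Mellin transform, or equivalently via \eqref{eq:functional-calculus}) and carefully tracking the paper's sign convention for the dilation generator so that the factor $2$ and the sign of $2iA$ in the denominator come out correctly. Once this identification is fixed, the passage through $\U$ and the Hilbert--Schmidt remainder estimate are routine.
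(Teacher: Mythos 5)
Your computation of the kernel of $\U\Xi\U^*$ is correct, the principal kernel you aim for is the right one, and the Hilbert--Schmidt estimate on the difference (using $|\tfrac{1}{1+\e^{s}}-\chi_-(s)|\le\e^{-|s|}$ together with the fact that the cutoffs force $x,y\ge\tfrac12\ln(8/7)>0$) is a perfectly good, and in fact more elementary, substitute for the paper's argument, which instead exhibits the difference as an operator of the form $a(X)b(D)c(X)$ with $b,c\in C_0(\R)$. So the overall architecture --- split off the sharp Volterra kernel $-\varrho_0(\e^{-2x})\,\mathbf 1_{y>x}\,\varrho_0(\e^{-2y})/y$ and show that the rest is compact --- is exactly the paper's.

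However, your second step contains a genuine error: the identity $\tfrac{2}{1+2iA}=\U\,\tfrac{2}{1+2iA_+}\,\U^*$ is false. The unitary $\U$ intertwines the dilation group on $\ltwo(\R_+)$ with the \emph{translation} group on $\ltwo(\R)$ (the paper uses precisely this in Section 4: $\U U^+_t\U^*=\e^{-itD/2}$), so $\U\,\tfrac{2}{1+2iA_+}\,\U^*$ is a function of $D$, i.e.\ a convolution operator; transferring the kernel $\mathbf 1_{\mu>\lambda}/\mu$ through $\U$ by your own kernel formula gives $2\e^{y-x}\mathbf 1_{y<x}$, not $\mathbf 1_{y>x}/y$. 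The fact you actually need must be established on $\ltwo(\R)$ directly: writing $\tfrac{2}{1+2iA}=\int_0^\infty\e^{-s/2}U_s\,\d s$ with $U_s$ the dilation group of $\ltwo(\R)$, one gets $\big[\tfrac{2}{1+2iA}g\big](x)=\int_0^\infty g(\e^sx)\,\d s=\int_x^\infty g(y)\,\frac{\d y}{y}$ for $x>0$, and since $\varrho_0(\e^{-2X})$ is supported in $\big[\tfrac12\ln(8/7),\infty\big)\subset\R_+$ the sandwiched operator does have the kernel $\varrho_0(\e^{-2x})\,\mathbf 1_{y>x}\,\varrho_0(\e^{-2y})/y$; the rest of your argument then goes through. (This direct identification via the substitution $y=\e^sx$ is how the paper proceeds.) As written, though, the step would fail: carried out literally, it compares $\U\Xi\U^*$ with the wrong operator.
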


\begin{proof}
By a direct computation on any $\f\in \ltwo(\R)$ one has
\begin{align}
\nonumber [\U\;\!\Xi\;\!\U^*\f](x) = & -\varrho_0\big(\e^{-2x})\int_{\R}\frac{\d y}{y}\;\!\frac{1}{1+\e^{x-y}}\varrho_0\big(\e^{-2y}\big)\f(y) \\
\label{eq:s} = & -\varrho_0\big(\e^{-2x})\int_{\R}\frac{\d y}{y}\;\!\Big[\frac{1}{1+\e^{x-y}}-\chi_-(x-y)\Big]\varrho_0\big(\e^{-2y}\big)\f(y) \\
\label{eq:b}  &\ \  -\varrho_0\big(\e^{-2x})\int_{\R}\frac{\d y}{y}\;\!\chi_-(x-y)\;\!\varrho_0\big(\e^{-2y}\big)\f(y),
\end{align}
where $\chi_-$ denotes the characteristic function on $\R_-$.
Observe now that the function 
$$
\R\ni s\mapsto \frac{1}{1+\e^s} -\chi_-(s)\in \R
$$
belongs to $\lone(\R)$. Then, since the function 
$a:\R\ni x\mapsto \varrho_0\big(\e^{-2x}\big)\in \R$ belongs to 
$C_0\big((-\infty,\infty]\big)$ and since 
the function $c:\R\ni y\mapsto \frac{1}{y}\varrho_0\big(\e^{-2y}\big)\in \R$
belongs to $C_0(\R)$,  
one infers that \eqref{eq:s} defines a compact operator $K_1$
of the form $a(X)\;\!b(D)\;\!c(X)$, with $b\in C_0(\R)$ and where $(X,D)$ are the canonically conjugate position and momentum operators on $\ltwo(\R)$.

For \eqref{eq:b}, observe that for $x>0$ one has
\begin{align*}
& -\varrho_0\big(\e^{-2x}\big)\int_{\R}\frac{\d y}{y}\;\!\chi_-(x-y)\;\!\varrho_0\big(\e^{-2y}\big)\f(y) \\
& =  -\varrho_0\big(\e^{-2x}\big)\int_x^\infty\frac{\d y}{y}\;\!\big[\varrho_0\big(\e^{-2X}\big)\f\big](y) \\
& =  -\varrho_0\big(\e^{-2x}\big)\int_0^\infty\d s\;\!\big[\varrho_0\big(\e^{-2X}\big)\f\big]\big(\e^sx\big) \\
& =  -\varrho_0\big(\e^{-2x}\big)\int_{\R}\d  s\;\!\chi_+(s)\e^{-s/2}\Big(U_s\big[\varrho_0\big(\e^{-2X}\big)\f\big]\Big)(x)\,
\end{align*}
where $\chi_+$ denotes the characteristic function of $\R_+$ and $\{U_s\}_{s\in \R}$
corresponds to the dilation group in $\ltwo(\R)$.
Note that we have assumed $x>0$ in the above computation, since for $x<0$ the factor $\varrho_0(\e^{-2x})$ already vanishes. 
Now, since the function $h:\R\ni s\mapsto \chi_+(s)\e^{-s/2} \in \R$ belongs to $\lone(\R)$, 
one infers that  \eqref{eq:b} defines an operator
of the form $a(X)\;\!b(A)\;\!a(X)$,
with the function $a$ introduced above, 
with the function $b$ also belonging to $C_0(\R)$,  
and with $A$ the generator of dilation in $\ltwo(\R)$. 
In fact, the function $b$ can be explicitly computed by taking the Fourier transform of $h$ and using an analogue of \eqref{eq:functional-calculus}, from which we obtain
$b(s):=\frac{2}{1+i2s}$.
\end{proof}

We now consider the term $R_1$.

\begin{Lemma}\label{lem:R1'}
The term $R_1$ can be rewritten as 
$\big\langle \big(N_2\;\! \Xi \;\!B + K\big)\varphi,\psi\big\rangle_\Hrond$
with $N_2$ introduced in \eqref{eq:N2}, $\Xi$ introduced in \eqref{eq:Xi}, 
$B$ introduced in \eqref{def:B3}, and $K\in \K(\Hrond)$.
\end{Lemma}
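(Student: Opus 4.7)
The strategy is to take the $\varepsilon \searrow 0$ limit carefully, recognise the resulting kernel as the composition $N_2\,\Xi\,B$ from \eqref{eq:N2}, \eqref{eq:Xi} and \eqref{def:B3}, and collect whatever does not fit this identification into a compact operator $K\in\K(\Hrond)$. The pivotal observation is that the kernel $(\sqrt{\mu}-\sqrt{\lambda})(\mu-\lambda-i\varepsilon)^{-1}$ has a \emph{regular} limit as $\varepsilon \searrow 0$: it converges pointwise to $(\sqrt{\mu}+\sqrt{\lambda})^{-1}$ and is uniformly dominated in $\varepsilon$ by this bounded function, because the Sokhotski--Plemelj $\delta(\mu-\lambda)$ contribution is annihilated by the vanishing numerator $\sqrt{\mu}-\sqrt{\lambda}$. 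In particular, no factor of $\vartheta(A_+)$ can arise here, which is precisely what distinguishes $R_1$ from $R_0$, $R_2$ and $R_4$. Combined with the strong limit
\[
\slim_{\varepsilon\searrow 0}\tilde\varphi_\varepsilon(\lambda)
=\varrho^\bot(\lambda)\,S_2\big(u+vR_0(\lambda+i0)v\big)^{-1}v\,\F_0(\lambda)^*\,\varphi(\lambda),
\]
a Lebesgue dominated convergence argument in the spirit of \cite[Thm.~2.5]{RT13_2} and of the preceding lemmas lets me push $\lim_{\varepsilon\searrow 0}$ inside the $\d\mu$ and $\d\lambda$ integrals.

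Once the limit has been taken inside, I identify the three factors in the integrand. The $\mu$-side prefactor $\F_0(\mu)\,v\,S_2\,\varrho^\bot(\mu)\,\mu^{-1/2}$ is exactly $N_2(\mu)$. The $\lambda$-side limit coincides with $(\sqrt{\lambda}\ln\lambda)^{-1}B(\lambda)\varphi(\lambda)$ by direct comparison with \eqref{def:B3}, since $B$ is built precisely to absorb the leading $(\sqrt{\lambda}\ln\lambda)^{-1}$ singularity of the inverse exhibited in Theorem \ref{thm_asym_new}. The scalar cutoffs $\varrho_0(\mu)$ and $\varrho_0(\lambda)$ hidden in the definition \eqref{eq:Xi} of $\Xi$ may be inserted freely in the middle, because $\varrho^\bot\varrho_0=\varrho^\bot$ and both $N_2(\mu)$ and $B(\lambda)$ carry their own $\varrho^\bot$. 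This produces exactly the kernel $N_2(\mu)\,\Xi(\mu,\lambda)\,B(\lambda)$ of $N_2\,\Xi\,B$.

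The compact remainder $K$ comes from the subleading parts of the asymptotic expansion \eqref{eq:asympt} of $(u+vR_0(\lambda+i0)v)^{-1}v\F_0(\lambda)^*$, namely the $\tfrac{1}{\eta}S_3\,O(1)+O(1)$ summands, which are more regular than the leading piece already absorbed into $B(\lambda)$. When these remainders are paired with the $\Xi$-type kernel and sandwiched between $N_2$ on the left and $\F_0(\lambda)^*$ on the right, the extra factor of $\eta$ or $\eta^2$ at low energy, together with the $\varrho^\bot(\mu)\mu^{-1/2}\varrho_0(\lambda)(\sqrt{\lambda}\ln\lambda)^{-1}$ weights, produces a kernel that is square-integrable on $\R_+\times\R_+$. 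The corresponding operator is then Hilbert--Schmidt on $\Hrond$. The main obstacle I expect is setting up this Hilbert--Schmidt bound cleanly in the style of Lemma \ref{lem:R3}, exploiting the finite rank of $S_2$ and the cancellation $\gamma_0\,v\,S_2=0$ recalled in the proof of Lemma \ref{lem:R2}; once this is in hand, the statement follows.
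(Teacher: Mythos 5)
Your main line of argument coincides with the paper's: the dominated convergence step hinges on exactly the bound $\bigl|(\sqrt{\mu}-\sqrt{\lambda})(\mu-\lambda+i\varepsilon)^{-1}\bigr|\le(\sqrt{\mu}+\sqrt{\lambda})^{-1}$ together with the strong convergence of $\varepsilon\mapsto\tilde\varphi_\varepsilon(\lambda)$ in $\H$, and the subsequent identification of the limiting kernel with $N_2(\mu)\,\Xi(\mu,\lambda)\,B(\lambda)$ via $\varrho^\bot\varrho_0=\varrho^\bot$ is precisely the part the paper calls ``straightforward''. Up to that point your proposal is correct and essentially identical to the published proof.

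The last paragraph, however, misidentifies the origin of the compact remainder. The operator $B(\lambda)$ of \eqref{def:B3} is defined with the \emph{full} operator $\big(u+vR_0(\lambda+i0)v\big)^{-1}v\F_0(\lambda)^*$ inside the bracket; the expansion \eqref{eq:asympt} is invoked only in Lemma \ref{lem:B3} to show that this weighted function stays bounded as $\lambda\searrow0$. Consequently there are no ``subleading'' $\tfrac1\eta S_3\,O(1)+O(1)$ pieces left over to be estimated: the kernel identification is exact, and the Hilbert--Schmidt bound you announce has nothing to apply to. What actually requires an argument --- and what the paper's $K$ refers to --- is the operator $\Xi$ itself: a priori it is only a formal integral operator, and Lemma \ref{Lem:Unitary-kernel} is needed to see that $\U\,\Xi\,\U^*=-\varrho_0\big(\e^{-2X}\big)\tfrac{2}{1+i2A}\varrho_0\big(\e^{-2X}\big)+K_1$ with $K_1$ compact, so that $\Xi$ is bounded and the correction coming from $K_1$ contributes compactly to $R_1$, because $\U^*K_1\U\otimes S_2\in\K\bigl(\ltwo(\R_+)\otimes\H\bigr)$ ($S_2$ being finite rank) and this is then sandwiched between the bounded operators $N_2$ and $B$. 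Replacing your phantom Hilbert--Schmidt estimate by this appeal to Lemma \ref{Lem:Unitary-kernel} completes the argument.
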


\begin{proof}
First, we show that for a dense set of $\varphi, \psi\in \Hrond$ and for 
a.e.~$\lambda\in \R_+$ we have
\begin{align*}
& \lim_{\varepsilon\searrow0}\int_0^\infty\d\mu\,
\Big\langle \tilde\varphi_\varepsilon(\lambda),
\big(\sqrt{\mu}-\sqrt{\lambda}\big) (\mu-\lambda+i\varepsilon)^{-1} \;\! 
N_2(\mu)^*\psi(\mu)\Big\rangle_\H \\
& = 
\int_0^\infty\d\mu\,
\Big\langle
\varrho^\bot(\lambda) S_2 \big(u+vR_0(\lambda+ i0)v\big)^{-1}v\F_0(\lambda)^*
\varphi(\lambda), \frac{1}{\sqrt{\mu}+\sqrt{\lambda}}\;\!
N_2(\mu)^*\psi(\mu)\Big\rangle_\H
\end{align*}
where $N_2(\mu)$ has been defined in \eqref{eq:N2}.
This can be obtained by a straightforward application of Lebesgue's dominated convergence theorem
by choosing $\varphi, \psi \in C_{\rm c}(\R_+;\hs)$, and by
observing that 
\begin{align*}
&\Big|\Big\langle \tilde\varphi_\varepsilon(\lambda),
\big(\sqrt{\mu}-\sqrt{\lambda}\big) (\mu-\lambda+i\varepsilon)^{-1} \;\! 
N_2(\mu)^*\psi(\mu)\Big\rangle_\H\Big| \\
& \leq \|\tilde\varphi_\varepsilon(\lambda)\|_\H\;\! \Big|\frac{\sqrt{\mu}-\sqrt{\lambda}}{\mu-\lambda+i\varepsilon}\Big|\;\! \|N_2(\mu)^*\psi(\mu)\|_\H \\
& \leq {\rm Const.}(\lambda)\;\! \frac{1}{\sqrt{\mu}+\sqrt{\lambda}}
\;\!\big\|\|N_2(\cdot)^*\psi(\cdot)\|_\H\big\|_\infty\;\!
\chi_{{\rm supp}\;\! \psi}(\mu), 
\end{align*}
which is clealy $\lone(\R_+)$ as a function of $\mu$. Note that we have used the strong convergence of the map $\varepsilon \mapsto \tilde\varphi_\varepsilon(\lambda)$ in $\H$
to infer the uniform bound on $\|\tilde\varphi_\varepsilon(\lambda)\|_\H$.

The rest of the proof is straightforward. Only for the term $K$
it is necessary to observe that this term is compact  since $\U^* K_1 \U \otimes S_2
\in \K\big(\ltwo(\R_+)\otimes \H\big)$
with $K_1\in \K\big(\ltwo(\R)\big)$ obtained from Lemma \ref{Lem:Unitary-kernel}.
\end{proof}

Let us now consider the final term $R_5$, 
namely the one with the factor $S_2^\bot$ inserted in \eqref{eq:wo}.
For that purpose, we introduce the map
\begin{equation}\label{eq:N5}
\R_+\ni\lambda\mapsto N_5(\lambda):=\F_0(\lambda)v S_2^\bot \in\B(\H,\hs),
\end{equation}
which is continuous, admits a limit as 
$\lambda\searrow0$, and vanishes as $\lambda\to\infty$,
see for example \cite[Lem.~4.8]{RTZ}.
The corresponding multiplication operator in 
$\B\big(\ltwo(\R_+;\H),\Hrond\big)$ is denoted by $N_5$.
Similarly, we define the map
\begin{equation*}
\R_+\ni \lambda \mapsto
B_5(\lambda):=S_2^\bot \big(u+vR_0(\lambda+ i0)v\big)^{-1}v\F_0(\lambda)^*
 \in \B(\hs,\H).
\end{equation*}
This map is continuous, bounded as $\lambda \searrow 0$ thanks to the expansion provided in 
Theorem \ref{thm_asym_new}, and vanishes as $\lambda \to \infty$.
The corresponding multiplication operator in 
 $\B\big(\Hrond,\ltwo(\R_+;\H)\big)$ is denoted by $B_5$.

\begin{Lemma}\label{lem:R5}
The term
$$
R_5:=\int_\R\d\lambda\,\lim_{\varepsilon\searrow0}\int_0^\infty\d\mu\,
\big\langle\F_0(\mu)v S_2^\bot \big(u+vR_0(\lambda+i\varepsilon)v\big)^{-1}v\F_0^*
\delta_\varepsilon(L-\lambda)\varphi,(\mu-\lambda+i\varepsilon)^{-1}
\psi(\mu)\big\rangle_\hs
$$
can be rewritten as
$\big\langle 2\pi i  N_5\;\!\big(\vartheta(A_+)\otimes 1_\H\big)
B_5\varphi, \psi\big\rangle_\Hrond$.
\end{Lemma}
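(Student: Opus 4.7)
My plan is to follow the same scheme used in the proofs of Lemmas \ref{lem:R0}, \ref{lem:R4}, and \ref{lem:R2}: reduce the expression $R_5$ to a product involving the dilation operator $\vartheta(A_+)$ sandwiched between the multiplication operators $N_5$ and $B_5$. The crucial simplification here is that the insertion of $S_2^\bot$ removes the low-energy singularity entirely, as already noted right after \eqref{eq:wo}. Consequently, neither a splitting into low/high energy pieces via $\varrho$ and $\varrho^\bot$, nor a decomposition of $\mu-\lambda$ into factors of $\sqrt\mu\pm\sqrt\lambda$ (as was needed for $R_1$--$R_4$) is required.

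I would first test the claimed identity on the dense set of $\varphi, \psi \in C^\infty_{\rm c}(\R_+)\odot C(\S)$. With $N_5(\mu)$ as in \eqref{eq:N5} and $B_5(\lambda)$ as defined just above the statement, the integrand in $R_5$ factors as
\begin{equation*}
(\mu-\lambda-i\varepsilon)^{-1}\big\langle N_5(\mu)\;\!B_5^{(\varepsilon)}(\lambda)\;\![\delta_\varepsilon(L-\lambda)\varphi](\lambda),\psi(\mu)\big\rangle_\hs,
\end{equation*}
where $B_5^{(\varepsilon)}(\lambda)$ denotes the analogue of $B_5(\lambda)$ with $+i0$ replaced by $+i\varepsilon$. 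The continuity of $\lambda\mapsto (u+vR_0(\lambda+i0)v)^{-1}$ and of $\lambda\mapsto \F_0(\lambda)v$ recalled in Section \ref{sec_prelim}, combined with the limiting absorption principle, furnishes a uniform-in-$\varepsilon$ bound on $B_5^{(\varepsilon)}(\lambda)$ on compact subsets of $\R_+$. This is sufficient to justify an application of Lebesgue's dominated convergence theorem in the variables $\varepsilon$ and $\mu$.

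The core computation is then the standard one, originating in \cite[Thm.~2.5]{RT13_2}: after the dominated convergence, the combined action of the Cauchy kernel $(\mu-\lambda-i\varepsilon)^{-1}$ and the approximate delta $\delta_\varepsilon(L-\lambda)$ produces, through a Mellin-type change of variables $\mu=\e^{-2x}\lambda$ and the functional calculus \eqref{eq:functional-calculus} of $A_+$, the bounded operator $2\pi i(\vartheta(A_+)\otimes 1_\H)$ acting between $N_5$ and $B_5$. This yields
\begin{equation*}
R_5=\big\langle 2\pi i\;\!N_5\;\!\big(\vartheta(A_+)\otimes 1_\H\big)B_5\varphi,\psi\big\rangle_\Hrond
\end{equation*}
on the dense subset, and then on all of $\Hrond$ by density, since $N_5\in\B(\ltwo(\R_+;\H),\Hrond)$ and $B_5\in\B(\Hrond,\ltwo(\R_+;\H))$.

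The only step that requires real care is the uniform control of $B_5^{(\varepsilon)}(\lambda)$ as $\varepsilon\searrow 0$, together with locating an $\lone$-dominating function in $\mu$; both are in fact inherited from the corresponding arguments carried out in the preceding lemmas, where the more delicate singular contributions are absent precisely because of the $S_2^\bot$ factor. Beyond that, the reduction is a routine transcription of the template in \cite[Thm.~2.5]{RT13_2} to the present operator-valued setting.
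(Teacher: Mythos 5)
Your proposal is correct and takes essentially the same route as the paper, which simply remarks that the proof is analogous to that of Lemma \ref{lem:R0} — an application of Lebesgue's dominated convergence theorem combined with the reduction of \cite[Thm.~2.5]{RT13_2} — with the factor $S_2^\bot$ eliminating the threshold singularity thanks to the expansion of Theorem \ref{thm_asym_new}, exactly as you observe. The only point worth stating explicitly is that the boundedness of $B_5(\lambda)$ as $\lambda\searrow 0$ (needed for the density extension) comes from the $S_2^\bot O(1)$ term in \eqref{eq:asympt}, which you implicitly use and the paper records just before the lemma.
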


The proof of this Lemma is very similar to the one of Lemma \ref{lem:R0} and 
involves only an application of Lebesgue's dominated convergence theorem,
as shown in the proof of \cite[Thm.~2.5]{RT13_2}.

%--------------------------------------------------------------------------------------
\section{Various representations}\label{sec_rep}
\setcounter{equation}{0}
%--------------------------------------------------------------------------------------

In this section we provide two new representations for the expressions obtained
above. These represenations will be useful for considering Levinson's theorem as
an index theorem. 
The first task is perform some commutations with the function $\vartheta(A_+)$ from \eqref{eq:vartheta}
which has appeared several times, The first statement provides the necessary
information for these commutations.

\begin{Lemma}\label{lem:commute}
Let $\vartheta$ be the function introduced in \eqref{eq:vartheta}. Then, for $j\in \{0,2,4,5\}$ the following inclusion holds:
\begin{equation*}
 N_j\big(\vartheta(A_+)\otimes 1_\H\big)-\big(\vartheta(A_+)\otimes 1_\hs\big)N_j
\in \K\big(\ltwo(\R_+;\H),\Hrond\big),
\end{equation*}
with $N_0$ introduced in \eqref{def:N0}, $N_2$ in \eqref{eq:N2}, 
$N_4$ in \eqref{eq:N4},
and $N_5$ in \eqref{eq:N5}.
\end{Lemma}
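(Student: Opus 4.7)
\emph{Proof plan.} The goal is to show that each multiplication operator $N_j$ commutes with $\vartheta(A_+)$ (interpreted with the appropriate tensor factors) up to a compact operator from $\ltwo(\R_+;\H)$ to $\Hrond$. My strategy is to reduce the claim to the scalar commutator identity in the $C^*$-algebra $\EE$ of Cordes: if $f\in C([0,+\infty])$ and $g\in C([-\infty,+\infty])$, then $[f(L),g(A_+)]\in\K(\ltwo(\R_+))$ (see \cite[Chap.~5]{Cordes}). Since $\vartheta(s)=\tfrac12(1-\tanh(\pi s))$ belongs to $C([-\infty,+\infty])$, this scalar fact is immediately available with $g=\vartheta$.

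The first task is to check that $N_j(\mu)\in\K(\H,\hs)$ for every $\mu\in\R_+$. For $j\in\{0,2,4\}$ this is immediate from the presence of the finite-rank factor $S_2$. For $j=5$, the operator $\F_0(\mu)v$ has integral kernel proportional to $(\omega,x)\mapsto\e^{-i\sqrt{\mu}\;\!\omega\cdot x}v(x)$, which is square-integrable on $\S\times\R^2$ because $v=|V|^{1/2}\in\ltwo(\R^2)$ (a consequence of the decay $\rho>11$). Hence $\F_0(\mu)v$, and so $N_5(\mu)=\F_0(\mu)vS_2^\bot$, is Hilbert-Schmidt. Combined with the norm-continuity of $\mu\mapsto N_j(\mu)$ on $\R_+$, its limit as $\mu\searrow0$, and its vanishing as $\mu\to+\infty$ (all recorded immediately before the lemma), this shows that $N_j$ defines a continuous function from the two-point compactification $[0,+\infty]$ of $\R_+$ into $\K(\H,\hs)$.

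Next, I decompose $N_j(\mu)=\chi(\mu)N_j(0)+\tilde N_j(\mu)$, where $\chi\in C([0,+\infty])$ is a cutoff with $\chi(0)=1$, $\chi(+\infty)=0$, so that $\tilde N_j\in C_0(\R_+;\K(\H,\hs))$ (meaning $\tilde N_j(\mu)\to0$ in operator norm as $\mu$ approaches either endpoint). The first piece contributes the commutator $[\chi(L),\vartheta(A_+)]\otimes N_j(0)$, a tensor product of a compact operator (by the Cordes fact) and a bounded operator, and hence compact. For $\tilde N_j$, I invoke the isomorphism $C_0(\R_+;\K(\H,\hs))\simeq C_0(\R_+)\otimes\K(\H,\hs)$, a standard consequence of the nuclearity of $C_0(\R_+)$, to approximate $\tilde N_j$ in sup norm by finite sums $\sum_k f_k(\mu)K_k$ with $f_k\in C_0(\R_+)$ and $K_k\in\K(\H,\hs)$. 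Each summand gives rise to $[f_k(L)\otimes K_k,\vartheta(A_+)\otimes 1_\H]=[f_k(L),\vartheta(A_+)]\otimes K_k$, which is again compact. Norm-closedness of $\K$ transfers compactness to the full commutator involving $\tilde N_j$, and adding the two contributions completes the argument.

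The only genuinely non-routine point is the Hilbert-Schmidt check for $N_5(\mu)$; once $\K$-valuedness of each $N_j$ is secured, the remainder is a standard manoeuvre inside the Cordes algebra.
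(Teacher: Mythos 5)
Your argument is correct and is essentially the paper's: the proof there simply cites \cite[Lem.~2.7]{RT13} and the Cordes commutator argument of \cite[Thm.~4.1.10]{ABG}, whose content is exactly your reduction --- a norm-continuous, compact-operator-valued $\mu\mapsto N_j(\mu)$ with limits at $0$ and $+\infty$, combined with compactness of $[f(L),\vartheta(A_+)]$ for scalar $f\in C([0,+\infty])$ and $\vartheta\in C([-\infty,+\infty])$. One caveat on wording: justifying the first piece $[\chi(L),\vartheta(A_+)]\otimes N_j(0)$ as ``compact tensor bounded, hence compact'' is false as a general principle (rank-one tensor the identity is not compact); the step survives only because you have already established $N_j(0)\in\K(\H,\hs)$ in your first paragraph, so the correct justification is ``compact tensor compact''.
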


\begin{proof}
This type of result has already been proved in \cite[Lem.~2.7]{RT13} and is based on
an argument of Cordes, see for instance \cite[Thm.~4.1.10]{ABG}.
The key element is that the functions $\lambda\mapsto N_j(\lambda)$
have limits at $0$ and at $\infty$, and that the
function $s\mapsto\vartheta(s)$ has a limit at
$-\infty$ and at $\infty$. For $\vartheta$ is property is clear, 
and for $N_j$ these properties have been discussed when these functions
have been introduced.
\end{proof}

For the next statement, recall that the scattering operator is defined
by $S:=W_+^*W_-$, and that its representation in the spectral representation of $H_0$
is denoted by $S(L)$, namely $S(L):=\F_0\;\!S\;\!\F_0^*$

\begin{Corollary}
The following equality holds:
\begin{equation*}
R_0+R_2+R_3+R_4+R_5 
= - \big\langle \big(\vartheta(A_+)\otimes 1_\hs\big)\big(S(L)-1\big)\psi\big\rangle_\Hrond +\langle K\varphi,\psi\rangle_\Hrond,
\end{equation*} 
with $K\in \K(\Hrond)$.
\end{Corollary}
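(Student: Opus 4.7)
The plan is to commute $\vartheta(A_+)\otimes 1_\H$ to the leftmost position in each of the four terms $R_0$, $R_2$, $R_4$, $R_5$ via Lemma~\ref{lem:commute}, to absorb $R_3$ into the compact remainder via Lemma~\ref{lem:R3}, and to recognise the resulting combined multiplication operator as $(2\pi i)^{-1}$ times the stationary kernel of $-(S(L)-1)$.

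First, applying Lemma~\ref{lem:commute} to each of $N_0$, $N_2$, $N_4$, $N_5$, substituting into Lemmas~\ref{lem:R0}, \ref{lem:R2}, \ref{lem:R4}, \ref{lem:R5}, and absorbing the resulting compact errors into a single $K_0\in\K(\Hrond)$, one obtains
\begin{equation*}
R_0+R_2+R_4+R_5=\big\langle 2\pi i\,(\vartheta(A_+)\otimes 1_\hs)\,\mathcal{M}\,\varphi,\psi\big\rangle_\Hrond+\langle K_0\varphi,\psi\rangle_\Hrond,
\end{equation*}
where $\mathcal{M}$ is the multiplication operator on $\Hrond$ defined by
\begin{equation*}
\mathcal{M}:=N_0B_0+N_2(M_2\otimes 1_\H)B+N_4(M_4\otimes 1_\H)B+N_5B_5.
\end{equation*}

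The main computational step is then to evaluate the fibre $\mathcal{M}(\lambda)$ at $\lambda\in\R_+$. Plugging in the explicit definitions \eqref{def:N0}, \eqref{def:B0}, \eqref{eq:N2}, \eqref{eq:N4}, \eqref{def:B3}, \eqref{eq:N5}, using $S_2^2=S_2$ and $(S_2^\bot)^2=S_2^\bot$, and applying the cutoff identity $\varrho^\bot\varrho_0=\varrho^\bot$, all the factors $\sqrt{\lambda}$ and $\ln(\lambda)$ distributed across $N_2$, $N_4$, $M_2$, $M_4$, $B$ cancel pairwise, and the fibre collapses to
\begin{equation*}
\mathcal{M}(\lambda)=\Big[\varrho(\lambda)+\big(\varrho^\bot(\lambda)\big)^2+\varrho(\lambda)\varrho^\bot(\lambda)\Big]\F_0(\lambda)vS_2\big(u+vR_0(\lambda+i0)v\big)^{-1}v\F_0(\lambda)^*+\F_0(\lambda)vS_2^\bot\big(u+vR_0(\lambda+i0)v\big)^{-1}v\F_0(\lambda)^*.
\end{equation*}
The scalar bracket equals $\varrho+\varrho^\bot(\varrho^\bot+\varrho)=\varrho+\varrho^\bot=1$, and together with $S_2+S_2^\bot=\mathbf{1}$ this further collapses the fibre to $\mathcal{M}(\lambda)=\F_0(\lambda)v\big(u+vR_0(\lambda+i0)v\big)^{-1}v\F_0(\lambda)^*$.

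To finish, I would invoke the standard stationary representation $S(\lambda)-1=-2\pi i\,\F_0(\lambda)v\big(u+vR_0(\lambda+i0)v\big)^{-1}v\F_0(\lambda)^*$, so that $2\pi i\,\mathcal{M}$ equals $-(S(L)-1)$ as a multiplication operator on $\Hrond$. Combining this with Lemma~\ref{lem:R3}, which gives $R_3=\langle K_3\varphi,\psi\rangle_\Hrond$ with $K_3\in\K(\Hrond)$, and setting $K:=K_0+K_3$, yields the stated formula. The only delicate point is verifying that the cutoff algebra collapses correctly: the distinct high/low-energy decompositions in the $\mu$-variable (via $\varrho$, $\varrho^\bot$) and in the $\lambda$-variable (via $\varrho_0$) must fuse, through $\varrho+(\varrho^\bot)^2+\varrho\varrho^\bot=1$ and $\varrho^\bot\varrho_0=\varrho^\bot$, into the single $\mathbf{1}$ needed to reconstruct the full T-matrix from the four non-compact fragments $R_0$, $R_2$, $R_4$, $R_5$.
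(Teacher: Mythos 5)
Your proposal is correct and follows essentially the same route as the paper: commute $\vartheta(A_+)\otimes 1$ to the front via Lemma \ref{lem:commute}, discard $R_3$ as compact, and observe that the cutoff identities $\varrho^\bot\varrho_0=\varrho^\bot$, $\varrho+(\varrho^\bot)^2+\varrho\varrho^\bot=1$ and $S_2+S_2^\bot=\mathbf{1}$ collapse the sum of fibres to $\F_0(\lambda)v\big(u+vR_0(\lambda+i0)v\big)^{-1}v\F_0(\lambda)^*=\frac{1}{-2\pi i}\big(S(\lambda)-1\big)$. The paper likewise invokes \cite[Thm.~1.8.1]{Yaf10} for this last stationary identity, so no gap remains.
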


\begin{proof}
Let us firstly observe that for any $\lambda>0$ one has
\begin{align*}
& N_0(\lambda)\;\!B_0(\lambda)+N_2(\lambda) \;\!B(\lambda) + N_4(\lambda)\;\! N_4(\lambda) + N_5(\lambda)\;\!B_5(\lambda) \\
& = \F_0(\lambda) v
\Big[ S_2\varrho(\lambda) 
+  S_2\varrho^\bot(\lambda)\frac{1}{\sqrt{\lambda}}\cdot \varrho_0(\lambda)\frac{1}{\ln(\lambda)}
\cdot \varrho^\bot (\lambda) \sqrt{\lambda}\ln(\lambda)
\\
& \qquad \qquad \quad +S_2\varrho(\lambda)\frac{1}{\lambda}\cdot \varrho_0(\lambda)\sqrt{\lambda}\frac{1}{\ln(\lambda)} \cdot \varrho^\bot (\lambda) \sqrt{\lambda}\ln(\lambda) \\
& \qquad \qquad \quad 
+ S_2^\bot 
\Big] \big(u+vR_0(\lambda+ i0)v\big)^{-1}v\F_0(\lambda)^* \\
& =   \F_0(\lambda) v
\Big[S_2 \big(\varrho(\lambda)  + \varrho^\bot(\lambda)^2 
+\varrho^\bot(\lambda)\varrho(\lambda)\big)+S_2^\bot  
\Big] \big(u+vR_0(\lambda+ i0)v\big)^{-1}v\F_0(\lambda)^* \\
& =  \F_0(\lambda) v
\big(u+vR_0(\lambda+ i0)v\big)^{-1}v\F_0(\lambda)^* \\
& =\frac{1}{-2\pi i}\big(S(\lambda)-1\big) 
\end{align*}
where $S(\lambda)$ denotes the scattering matrix at energy $\lambda$.
The last equality can be found for example in \cite[Thm.~1.8.1]{Yaf10}.
It remains then to collect the fomulas obtained in Lemmas \ref{lem:R0}, 
\ref{lem:R3}, \ref{lem:R4}, \ref{lem:R2}, \ref{lem:R5},  together with Lemma
 \ref{lem:commute} to obtain the result.
\end{proof}

Collecting the results obtained so far, we have obtained the equality
\begin{equation}\label{eq:debase}
\F_0\big(W_--1\big)\F_0^* 
=   \big(\tfrac12\big(1-\tanh(\pi A_+)\big)\otimes 1_\hs\big)\big(S(L)-1\big) 
-N_2\;\! \Xi \;\!B  + K,
\end{equation}
with $K\in \K(\Hrond)$. This equality holds in $\Hrond = \ltwo(\R_+;\hs)$.

As suggested by the analysis of the operator $\Xi$ in Lemma \ref{Lem:Unitary-kernel},
we shall firstly look at this equality in the Hilbert space $\ltwo(\R;\hs)$ by using the unitary map
$\U$ defined in \eqref{eq:U}.
The image of $\Xi$ in this representation has been computed in Lemma \ref{Lem:Unitary-kernel}.
For any multiplication operator $M$ defined by $\R_+\ni \lambda\mapsto  M(\lambda)$, it is easily seen that for any $\f\in \ltwo(\R;\hs)$ one has
$$
[\U M \U^*\f](x) = M\big(\e^{-2x}\big)\f(x)\equiv \big[M\big(\e^{-2X}\big)\f\big](x).
$$
Finally, if we consider the dilation group $\{U_t^+\}_{t\in \R}$ we obtain by a straightforward computation that
$$
[\U U_t^+ \U^*\f](x) = \f\big(x-\tfrac{1}{2}t\big) = \big[\e^{-it\frac{1}{2}D}\f\big](x),
$$
where $D=-i\frac{\d}{\d x}$. Summing up this information we find in $\ltwo(\R;\hs)$ the equality
\begin{equation}\label{eq:explicit}
\U\F_0\big(W_--1\big)\F_0^*\U^* 
=   \Big(\tfrac12\big(1-\tanh(\tfrac{\pi}{2}D)\big)\otimes 1_\hs\Big)\big(\tilde{S}(X)-1\big)  
 +\tilde{N_2}(X) \;\!\frac{2}{1+i2A}\;\! \tilde{B}(X)  + K,
\end{equation}
with $K\in \K\big(\ltwo(\R,\hs)\big)$ and with the tilde functions given by rescaling the arguments. More precisely, we set $\tilde{S}(X):=S\big(\e^{-2X}\big)$, 
$\tilde{N_2}(X):=N_2\big(\e^{-2X}\big)$, $\tilde{B}(X):=B\big(\e^{-2X}\big)$.
As clearly visible in this formula, the three generators $X, D, A$ are involved in this expression, 
namely the position operator, the generator of translation, and the generator of dilations.

It turns out that a $C^*$-algebra generated by continuous functions of $3$ generators has been introduced and studied in \cite[Chap.~5]{Cordes}. The algebra is constructed on the Hilbert
space $\ltwo(\R_+)$ while the above expression is taking place on $\ltwo(\R)$. 
In order to fit into the framework of Cordes, we need to consider one more unitary 
transformation, namely the decomposition into even and odd functions on $\ltwo(\R)$.

Let us consider 
$\V:\ltwo(\R)\to\ltwo(\R_+;\C^2)$ given by
$$
\V \f:=\sqrt2
\begin{pmatrix}
\f_{\rm e}\\
\f_{\rm o}
\end{pmatrix}\quad{\rm and}\quad
\big[\V^*
\big(\begin{smallmatrix}
\f_1\\
\f_2
\end{smallmatrix}\big)\big](x)
:=\textstyle\frac1{\sqrt2}
\big[f_1(|x|)+\sgn(x)f_2(|x|)\big],
$$
for $\f\in\ltwo(\R)$, $\big(\begin{smallmatrix}
\f_1\\
\f_2
\end{smallmatrix}\big)\in\ltwo(\R_+;\C^2)$, and $x\in\R$.
Here $\f_{\rm e}, \f_{\rm o}$ denote the even and the odd part of $\f$.
Then, one observes that if $m$ is a function on $\R$ 
\begin{equation}\label{eq:g1}
\V m(X)\V^*=
\left(\begin{smallmatrix}
m_{\rm e}(L)~ & m_{\rm o}(L)\\
m_{\rm o}(L)~ & m_{\rm e}(L)
\end{smallmatrix}\right)
\end{equation}
while 
\begin{equation}\label{eq:g2}
\V m(A)\V^*=
\left(\begin{smallmatrix}
m(A_+) & 0\\
0 & m(A_+)
\end{smallmatrix}\right).
\end{equation}

In order to consider $\V m(D)\V^*$, let us denote by $\F_1$ the usual unitary Fourier transform
in $\ltwo(\R)$, and let $\F_{\rm N}$, $\F_{\rm D}$ be the unitary cosine and sine transforms on $\ltwo(\R_+)$, respectively.  The subscripts $\rm N$ and $\rm D$ are related to the Neumann Laplacian and the Dirichlet Laplacian in $\ltwo(\R_+)$, which are diagonalised by $\F_{\rm N}$ and $\F_{\rm D}$, respectively.  
Note also that these operators correspond to their own inverse.
It is then easily checked that
$$
\V \F_1\V^*=
\left(\begin{smallmatrix}
\F_{\rm N} & 0\\
0 & i\F_{\rm D}
\end{smallmatrix}\right).
$$
In addition, by a straightforward computation one gets
$$
\V m(D)\V^* = \V \F_1^* m(X) \F_1 \V^*
= 
\left(\begin{smallmatrix}
\F_{\rm N} m_{\rm e}(L)\F_{\rm N} &  -i \F_{\rm N} m_{\rm o}(L)\F_{\rm D}\\
i \F_{\rm D}m_{\rm o}(L)\F_{\rm N} & \F_{\rm D} m_{\rm e}(L) \F_{\rm D}
\end{smallmatrix}\right).
$$
For the final step, let us recall that the Neumann Laplacian satisfies $-\Delta_{\rm N}:=\F_{\rm N} L^2 \F_{\rm N}$, and that 
$$
i\F_{\rm N} \F_{\rm D} = -\tanh(\pi A_+)+ i \cosh(\pi A_+)^{-1}=:\phi(A_+).
$$
We refer for example to \cite[Prop.~4.13]{DR} for a proof of the above equality.
Then, we end up with 
\begin{equation}\label{eq:g3}
\V m(D)\V^* = \V \F_1^* m(X) \F_1 \V^*
= 
\left(\begin{smallmatrix}
m_{\rm e}\big(\sqrt{-\Delta_{\rm N}}\big) &  - m_{\rm o}\big(\sqrt{-\Delta_{\rm N}}\big)
\phi(A_+)\\
-\overline{\phi}(A_+) m_{\rm o}\big(\sqrt{-\Delta_{\rm N}}\big) 
&\  \overline{\phi}(A_+) m_{\rm e}\big(\sqrt{-\Delta_{\rm N}}\big) \phi(A_+)
\end{smallmatrix}\right).
\end{equation}
Thus, the three equalities \eqref{eq:g1}, \eqref{eq:g2}, and \eqref{eq:g3} allow us
to compute the image of \eqref{eq:explicit} into $\ltwo(\R_+;\hs)^2$~:

\begin{Lemma}
The expression $\V\U\F_0W_-\F_0^*\U^* \V^*$ is given by
\begin{align}\label{eq:M2value}
\begin{split}
&
\left(\begin{smallmatrix} 
1 & 0 \\ 0 & 1
\end{smallmatrix}\right) 
+\tfrac{1}{2}
\left(\begin{smallmatrix} 
1 &  \tanh\big(\tfrac{\pi}{2}\sqrt{-\Delta_{\rm N}}\big) \phi(A_+) \\ 
\overline{\phi}(A_+)\tanh\big(\tfrac{\pi}{2}\sqrt{-\Delta_{\rm N}}\big)  & 1
\end{smallmatrix}\right) 
\left(\begin{smallmatrix} 
\tilde{S}_{\rm e}(L)-1 & \tilde{S}_{\rm o}(L)  \\ 
\tilde{S}_{\rm o}(L) & \tilde{S}_{\rm e}(L) - 1
\end{smallmatrix}\right) 
\\
& \qquad + \left(\begin{smallmatrix} 
(\tilde{N}_2)_{\rm e}(L) & (\tilde{N}_2)_{\rm o}(L)  \\ 
(\tilde{N}_2)_{\rm o}(L) & (\tilde{N}_2)_{\rm e}(L) 
\end{smallmatrix}\right)  
 \left(\begin{smallmatrix} 
\frac{2}{1+i2A_+} & 0  \\ 
0 & \frac{2}{1+i2A_+}
\end{smallmatrix}\right)  
\left(\begin{smallmatrix} 
\tilde{B}_{\rm e}(L) & \tilde{B}_{\rm o}(L)  \\ 
\tilde{B}_{\rm o}(L) & \tilde{B}_{\rm e}(L) 
\end{smallmatrix}\right)  + K
\end{split}
\end{align}
with $K\in \K\big(\ltwo(\R_+;\hs)^2\big)$.
\end{Lemma}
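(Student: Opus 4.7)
The plan is to apply the unitary conjugation $\V(\cdot)\V^*$ to both sides of equation \eqref{eq:explicit}. Since $\V$ is unitary, compactness is preserved, so the remainder $K\in\K\big(\ltwo(\R;\hs)\big)$ in \eqref{eq:explicit} becomes a new compact operator in $\K\big(\ltwo(\R_+;\hs)^2\big)$ accounting for the last summand of \eqref{eq:M2value}. Moving the $-1$ from $W_- - 1$ across the equality then contributes the identity matrix $\left(\begin{smallmatrix}1&0\\0&1\end{smallmatrix}\right)$ displayed as the first term of \eqref{eq:M2value}.

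The core of the argument is then a direct application of \eqref{eq:g1}, \eqref{eq:g2}, and \eqref{eq:g3} to each of the three remaining factors, noting that the tensor factor $1_\hs$ commutes with $\V$ and is simply transported into each matrix entry. The multiplication operators $\tilde{S}(X)-1$, $\tilde{N}_2(X)$ and $\tilde{B}(X)$ are functions of $X$, so \eqref{eq:g1} delivers the three $2\times 2$ matrices of even and odd parts that appear in \eqref{eq:M2value}. The operator $\frac{2}{1+i2A}$ is a function of $A$, so \eqref{eq:g2} yields the diagonal matrix with entries $\frac{2}{1+i2A_+}$.

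The only computation requiring care, which I expect to be the main obstacle, is the conjugation of $\tfrac12\big(1-\tanh(\tfrac{\pi}{2}D)\big)$ via \eqref{eq:g3}. Setting $m(s):=\tfrac12\big(1-\tanh(\pi s/2)\big)$, one has $m_{\rm e}(s)=1/2$ and $m_{\rm o}(s)=-\tfrac12\tanh(\pi s/2)$. Substituting these into \eqref{eq:g3}, the two minus signs cancel in the off-diagonal positions, yielding $\tfrac12\tanh\big(\tfrac{\pi}{2}\sqrt{-\Delta_{\rm N}}\big)\phi(A_+)$ and $\tfrac12\overline{\phi}(A_+)\tanh\big(\tfrac{\pi}{2}\sqrt{-\Delta_{\rm N}}\big)$; the top-left entry is $1/2$, and the bottom-right is $\tfrac12\overline{\phi}(A_+)\phi(A_+)$. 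The key algebraic identity
$$
\overline{\phi}(A_+)\phi(A_+)=\tanh^2(\pi A_+)+\cosh^{-2}(\pi A_+)=1,
$$
which follows directly from $\phi(A_+)=-\tanh(\pi A_+)+i\cosh(\pi A_+)^{-1}$, shows that the bottom-right entry is also $1/2$. Factoring out $\tfrac12$ reproduces exactly the first matrix factor of \eqref{eq:M2value}. Assembling the three conjugated factors and the identity and compact pieces then produces the announced formula.
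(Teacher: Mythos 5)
Your proposal is correct and follows exactly the route the paper intends: conjugating \eqref{eq:explicit} by $\V$ and applying \eqref{eq:g1}, \eqref{eq:g2}, \eqref{eq:g3} term by term, with the only nontrivial point being the first factor, which you resolve correctly via $m_{\rm e}=\tfrac12$, $m_{\rm o}(s)=-\tfrac12\tanh(\pi s/2)$ and the identity $\overline{\phi}(A_+)\phi(A_+)=\tanh^2(\pi A_+)+\cosh^{-2}(\pi A_+)=1$. The paper leaves this computation implicit, so your write-up simply supplies the details of the same argument.
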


Let us now recall the already mentioned construction of Cordes. 
In \cite[Sec.~V.7]{Cordes}, the following $C^*$-subalgebra
of $\B\big(L^2(\R_+)\big)$ is introduced: 
$$
\EE:=C^*\Big(a_i(A_+)b_i(L)c_i(-\Delta_{\rm N})\mid a_i\in C\big([-\infty,+\infty]\big), \ b_i,c_i\in C\big([0,+ \infty]\big)\Big).
$$ 
It is then shown in \cite[Thm.~V.7.3]{Cordes} that the quotient algebra $\EE/\K\big(L^2(\R_+)\big)$ is isomorphic to $C( \hexagon)$, the set of continuous functions defined on the edges of a hexagon
see Figure \ref{fig1}. 
For an operator of the form  
$a(A_+)\;\!b(L)\;\!c\big(\sqrt{-\Delta_{\rm N}}\big) \in \EE$, 
its image in the quotient algebra takes the form
\begin{align}\label{eq:6c}
\begin{split}
&\Gamma_1(s):=a(s)\;\!b(0)\;\!c(+\infty), \qquad s\in [-\infty, +\infty], \\
&\Gamma_2(\ell):=a(+\infty)\;\!b(\ell)\;\!c(+\infty), \qquad \ell\in [0,+\infty], \\
&\Gamma_3(\xi):=a(+\infty)\;\!b(+\infty)\;\!c(\xi), \qquad \xi \in [+\infty,0], \\
&\Gamma_4(s):=a(s)\;\!b(+\infty)\;\!c(0), \qquad s\in [+\infty, -\infty], \\
&\Gamma_5(\xi):=a(-\infty)\;\!b(+\infty)\;\!c(\xi), \qquad \xi \in [0,+\infty], \\
&\Gamma_6(\ell):=a(-\infty)\;\!b(\ell)\;\!c(+\infty), \qquad \ell \in [+\infty,0].
\end{split}
\end{align}
Observe that we gave an orientation on the interval on which these functions are
defined. As a result, the concatenation map
$$
\Gamma\equiv (\Gamma_1,\Gamma_2,\Gamma_3,\Gamma_4,\Gamma_5,\Gamma_6): \hexagon \to \C
$$
is continuous, even at the vertices of the hexagon.

Our interest in the construction of Cordes comes from the similarity between the elements of $\EE$
and the formula \eqref{eq:M2value}. Indeed, the functions of the three operators $L$, $A_+$, $-\Delta$ are continuous, and have limits either at $-\infty$ and $+\infty$, or at $0$ and $+\infty$
(we shall recall these limits below). The only difference is that we have to consider
the unital $C^*$-algebra $\big(M_2(\EE)\otimes \K(\hs)\big)^+$, the $2\times 2$ matrices with values in $\EE$ tensor
product with the compact operators on $\hs$, and $\C$ times the identity added. Clearly, this algebra contains the ideal
$M_2\big(\K(\ltwo(\R_+))\big)\otimes \K(\hs)$, and one has
$$
\big(M_2(\EE)\otimes \K(\hs)\big)^+ \big/ M_2\big(\K(\ltwo(\R_+))\big)\otimes \K(\hs)
= \big(M_2\big(C(\hexagon)\big)\otimes \K(\hs)\big)^+.
$$ 
One can thus look at the image of \eqref{eq:M2value} through the quotient map
$$
q: \big(M_2(\EE)\otimes \K(\hs)\big)^+ \to \big(M_2\big(C(\hexagon)\big)\otimes \K(\hs)\big)^+
$$
with kernel $M_2\big(\K(\ltwo(\R_+))\big)\otimes \K(\hs)$.
In the next statement we provide this image, keeping the convention provided in \eqref{eq:6c}
for the enumeration of the $6$ components.

\begin{figure}[!ht]
\centering
\tdplotsetmaincoords{80}{0}
    \begin{tikzpicture}[scale = 4, tdplot_main_coords]
        \coordinate (O) at (0,0,0);
        
        \coordinate (A1) at (1,0,0);
        \coordinate (A2) at (0.4,1.5,0);
        \coordinate (A3) at (1.4,1.5,0);
        \coordinate (B0) at (0,0,1);
        \coordinate (B1) at (1,0,1);
        \coordinate (B2) at (0.4,1.5,1);
        \coordinate (B3) at (1.4,1.5,1);

        \draw[-{Triangle[length=2mm, width=2mm]}, line width = 0.75mm] (A1) node[inner sep = 1.5mm, anchor=north east]{$\infty$} to node [below]{$\ell$} (O) node[inner sep = 1mm, anchor=120]{$0$};
        \draw[dashed, gray] (O) -- (A2) ; %
        \draw[-{Triangle[length=2mm, width=2mm]},line width = 0.75mm] (A3) node[inner sep = 1.5mm, anchor=80]{$0$} to node [below]{$\xi$} (A1) node[inner sep = 0.5mm, anchor=north west]{$\infty$};
        \draw[gray, dashed](A2) -- (A3); %
        \draw[-{Triangle[length=2mm, width=2mm]},line width = 0.75mm] (B0) node[inner sep = 1mm, anchor=250]{$0$} to node [above]{$\ell$} (B1) node[anchor=south east]{$\infty$};
        \draw[gray] (B0) -- (B2);  %
        \draw[-{Triangle[length=2mm, width=2mm]}, line width = 0.75mm] (B1) node[inner sep = 2.5mm, anchor=260]{$\infty$} to node [above]{$\xi$} (B3) node[inner sep = 0.5mm, anchor=300]{$0$};
        \draw[gray] (B2) -- (B3);  %
        \draw[-{Triangle[length=2mm, width=2mm]},line width = 0.75mm] (O) node[inner sep = 0.5mm, anchor=-20]{$-\infty$} to node [left]{$s$} (B0) node[inner sep = 1mm, anchor=30]{$\infty$};
        \draw[gray] (A1) -- (B1);  %
        \draw[gray,dashed] (A2) -- (B2);  %
        \draw[-{Triangle[length=2mm, width=2mm]},line width = 0.75mm] (B3) node[inner sep = 1mm, anchor=150]{$\infty$} to node [right]{$s$} (A3) node[inner sep = 0.75mm, anchor=200]{$-\infty$};
    \end{tikzpicture}
\caption{Representation of the quotient algebra, with orientation indicated on the edges. The starting point of $\Gamma_1$ is located on the lower left corner.}
\label{fig1}
\end{figure}
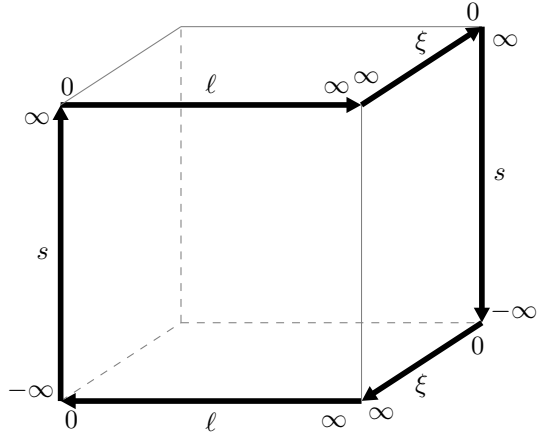

\begin{Proposition}\label{prop:6c}
The operator provided in \eqref{eq:M2value} belongs to $\big(M_2(\EE)\otimes \K(\hs)\big)^+$, 
and its image through the quotient map $q$ consists in the following $6$ operator-valued functions:
\begin{align*}
&\Gamma_1(s):= 
\left(\begin{smallmatrix} 
1 & 0 \\ 0 & 1
\end{smallmatrix}\right) +  \tfrac{1}{2} \big(S(1)-1\big)
 \left(\begin{smallmatrix} 
1 & \phi(s) \\ \overline{\phi}(s) & 1
\end{smallmatrix}\right),&
 s\in [-\infty, +\infty], \\
& \Gamma_2(\ell):= 
\left(\begin{smallmatrix} 
1 & 0 \\ 0 & 1
\end{smallmatrix}\right) + \tfrac{1}{2}\big(S\big(\e^{2\ell}\big)-1\big)
\left(\begin{smallmatrix} 
1 & -1 \\ -1 & 1
\end{smallmatrix}\right), &
 \ell\in [0,+\infty], \\
&\Gamma_3(\xi):= \left(\begin{smallmatrix} 1 & 0 \\ 0 & 1 
\end{smallmatrix}\right),&  \xi \in [+\infty,0], \\
&\Gamma_4(s):= 
\left(\begin{smallmatrix} 
1 & 0 \\ 0 & 1
\end{smallmatrix}\right) 
+\tfrac{1}{2}\frac{2}{1+i2s}N_2(0)\;\!B(0)
\left(\begin{smallmatrix} 1 & 1 \\1 & 1 
\end{smallmatrix}\right),&
 s\in [+\infty, -\infty], \\
&\Gamma_5(\xi):=\left(\begin{smallmatrix} 1 & 0 \\ 0 & 1 
\end{smallmatrix}\right),&   \xi \in [0,+\infty], \\
&\Gamma_6(\ell):= 
\left(\begin{smallmatrix} 
1 & 0 \\ 0 & 1
\end{smallmatrix}\right) + \tfrac{1}{2}\big(S\big(\e^{-2\ell}\big)-1\big)
\left(\begin{smallmatrix} 
1 & 1 \\ 1 & 1
\end{smallmatrix}\right), &
\qquad \ell \in [+\infty,0].
\end{align*} 
\end{Proposition}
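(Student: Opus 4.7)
The proof naturally splits into two stages: first, verify that the operator in \eqref{eq:M2value} genuinely lies in the unital algebra $\big(M_2(\EE)\otimes\K(\hs)\big)^+$, so that the quotient map $q$ applies; second, compute $q$ on each of the six edges of the hexagon via the recipe \eqref{eq:6c}.

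For the membership stage, I would handle the scalar and operator-valued factors separately. The scalar functions $\phi(A_+)$, $\overline{\phi}(A_+)$, and $\tfrac{2}{1+i2A_+}$ of $A_+$ extend continuously to $[-\infty,+\infty]$, and $\tanh\big(\tfrac{\pi}{2}\sqrt{-\Delta_{\rm N}}\big)$ extends continuously to $[0,+\infty]$; these factors therefore sit in $\EE$. The operator-valued multiplications $\tilde{S}(L)-1$, $\tilde{N}_2(L)$, $\tilde{B}(L)$ are norm-continuous maps from $\R_+$ into $\K(\hs)$, $\K(\H,\hs)$, $\K(\hs,\H)$ respectively, and admit norm limits at both endpoints $0$ and $+\infty$; this is what has been recorded in Lemmas \ref{lem:B3}, \ref{lem:R2} together with the standard compactness of $S(\lambda)-1$ on $\hs$. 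Via the canonical isomorphism $C([0,+\infty];\K(\hs))\cong C([0,+\infty])\otimes\K(\hs)$ they can be approximated in norm by finite sums $\sum_i b_i(L)\otimes K_i$, placing them inside $\EE\otimes\K(\hs)$. Combining across the matrix structure and adjoining the leading identity in \eqref{eq:M2value} as the scalar unit gives membership in $\big(M_2(\EE)\otimes\K(\hs)\big)^+$.

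For the evaluation stage, one reads off the image of each factor at the two coordinates fixed by the edge. The relevant scalar endpoint values are $\tanh\big(\tfrac{\pi}{2}\sqrt{-\Delta_{\rm N}}\big)\to 1$ at $\xi=+\infty$ and $\to 0$ at $\xi=0$; $\phi(s)\to\mp 1$ as $s\to\pm\infty$ (and the same for $\overline{\phi}$); and $\tfrac{2}{1+i2s}\to 0$ at $s=\pm\infty$. For the operator-valued factors one uses $S(0)=S(+\infty)=1$, giving $\tilde{S}_{\rm e}(0)=S(1)$, $\tilde{S}_{\rm o}(0)=0$, $\tilde{S}_{\rm e}(+\infty)=1$, $\tilde{S}_{\rm o}(+\infty)=0$, together with $\tilde{B}(0)=B(1)=0$ (because $\varrho^\bot(1)=0$) and $N_2(+\infty)=0$, which forces $\tilde{N}_{2,{\rm e}}(+\infty)=\tilde{N}_{2,{\rm o}}(+\infty)=\tfrac12 N_2(0)$ and similarly for $\tilde{B}$. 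With these in hand, each $\Gamma_j$ collapses by direct matrix multiplication: on $\Gamma_1$ the second summand dies because $\tilde{B}(0)=0$; on $\Gamma_2,\Gamma_3,\Gamma_5,\Gamma_6$ it dies because $\tfrac{2}{1+i2A_+}\to 0$; on $\Gamma_3,\Gamma_4,\Gamma_5$ the first summand dies because $\tilde{S}_{\rm e}(+\infty)-1=0$ (and additionally $\tanh\to 0$ on $\Gamma_4$). The surviving simplifications on $\Gamma_2$ and $\Gamma_6$ rest on the identity $\tilde{S}_{\rm e}(\ell)\mp\tilde{S}_{\rm o}(\ell)=S(\e^{\pm 2\ell})$, and on $\Gamma_4$ on $\left(\begin{smallmatrix}1&1\\1&1\end{smallmatrix}\right)^2=2\left(\begin{smallmatrix}1&1\\1&1\end{smallmatrix}\right)$.

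The main technical point is really in the first stage: one must ensure that the operator-valued multiplications lie in $\EE\otimes\K(\hs)$, which requires not merely fiberwise compactness but norm continuity with norm limits at both endpoints. The fiberwise compactness of $N_2(\lambda)$ and $B(\lambda)$ follows from their factoring through the finite-dimensional projection $S_2$, while the norm continuity and existence of endpoint limits are exactly what the resolvent expansion \eqref{eq:asympt} of Theorem \ref{thm_asym_new} and the cut-offs $\varrho,\varrho^\bot,\varrho_0$ built into $N_2$ and $B$ were designed to deliver. Once these analytic facts are in place, the remainder of the proof is bookkeeping.
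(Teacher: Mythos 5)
Your proposal is correct and follows essentially the same route as the paper: establish membership in $\big(M_2(\EE)\otimes \K(\hs)\big)^+$ from the norm continuity and endpoint limits of all the factors, then evaluate edge by edge via \eqref{eq:6c} using precisely the endpoint values $S(0)=S(\infty)=1$, $\phi(\pm\infty)=\mp 1$, $\tanh\big(\tfrac{\pi}{2}\;\!\cdot\big)\to 1$ at $+\infty$ and $\to 0$ at $0$, $\tfrac{2}{1+i2s}\to 0$ at $\pm\infty$, $\tilde{N}_2(0)=\tilde{B}(0)=0$ from the cut-off $\varrho^\bot$, and $(\tilde{N}_2)_{\rm e}(+\infty)=(\tilde{N}_2)_{\rm o}(+\infty)=\tfrac12 N_2(0)$. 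One small wording caveat: $\tilde{N}_2(L)$ and $\tilde{B}(L)$ map between $\ltwo(\R_+;\hs)$ and $\ltwo(\R_+;\H)$, so they do not individually lie in $M_2(\EE)\otimes\K(\hs)$; it is the full triple product, which factors through the finite-rank projection $S_2$ as you correctly note at the end, that belongs to the algebra.
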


\begin{proof}
When the various factors of  \eqref{eq:M2value} were introduced,
their continuity properties and the existence of their limits at endpoints have
been discussed. The only missing information is about
$S(\lambda)-1$. It is known that for any $\lambda>0$, one has
$S(\lambda)-1\in \K(\hs)$, and that the map $\lambda \mapsto S(\lambda)-1$
is continuous, see for example \cite[Prop.~8.1.5]{Yaf10}.
In addition, it has been shown in \cite[Thm.~1.1]{RTZ} that
$\lim_{\lambda \searrow 0}S(\lambda)=1$. Since $\lim_{\lambda \to \infty}S(\lambda)=1$, 
with the limit taken in $\B(\hs)$,
the function $\lambda \mapsto S(\lambda)-1$ belongs to $C_0\big(\R_+,\K(\hs)\big)$.
By inspection of the various factors, one can now infer that
the operator provided in \eqref{eq:M2value} belongs to $\big(M_2(\EE)\otimes \K(\hs)\big)^+$. 
 
Let's move to the image of this operator in the quotient algebra. 
By using the formulas proposed in \eqref{eq:6c}, the computations are rather straightforward. 
For $\Gamma_1$, it is necessary to observe that $\tilde{S}_{\rm e}(0)=S(1)$ while
$\tilde{S}_{\rm o}(0)=0$. In addition, $\lim_{\xi \to + \infty}\tanh\big(\frac{\pi}{2}\xi\big) =1$.
Because of the localization function  $\varrho^\bot$, one also observes that $\tilde{N}_2(0)=0$ and
$\tilde{B}(0)=0$.  
For $\Gamma_2$, note that $\lim_{s\to +\infty}\phi(s)=-1$, and then the expression
$$
\left(\begin{smallmatrix} 
1 & 0 \\ 0 & 1
\end{smallmatrix}\right) 
+\tfrac{1}{2}
\left(\begin{smallmatrix} 
1 &  -1 \\ 
-1  & 1
\end{smallmatrix}\right) 
\left(\begin{smallmatrix} 
\tilde{S}_{\rm e}(\ell)-1 & \ \tilde{S}_{\rm o}(\ell)  \\ 
\tilde{S}_{\rm o}(\ell) & \ \tilde{S}_{\rm e}(\ell) - 1
\end{smallmatrix}\right) 
$$
leads directly to the result. The computation for $\Gamma_6$ is very similar, once the equality
$\lim_{s\to -\infty}\phi(s)=1$ is taken into account.
For $\Gamma_3$ and for $\Gamma_5$, it is sufficient to remember that $\lim_{\lambda \searrow   0}S(\lambda)=1$ and that $\lim_{\lambda \to \infty}S(\lambda)=1$. These equalities
imply that $\lim_{\lambda \to \pm \infty} \tilde{S}_{\rm e}(\ell)=1$ while $\lim_{\lambda \to \pm \infty}\tilde{S}_{\rm o}(\ell)=0$, with these limits taken in $\B(\hs)$.
Finally for $\Gamma_4$, it is necessary to observe that $\lim_{\ell \to\infty}\tilde{N}_2(\ell)
=N_2(0)$, $\lim_{\ell\to \infty}\tilde{B}(\ell)=B(0)$, and then we have
$$
\lim_{\ell \to \infty}(\tilde{N}_2)_{\rm e}(\ell)=\tfrac{1}{2}N_2(0) =\lim_{\ell \to \infty}(\tilde{N}_2)_{\rm o}(\ell), 
$$
and 
$$
\lim_{\ell \to \infty}(\tilde{B})_{\rm e}(\ell)=\tfrac{1}{2}B(0) = \lim_{\ell \to \infty}(\tilde{B})_{\rm o}(\ell).
$$
This leads us directly to the statement. 
\end{proof}

In order to fully exploit the previous result, it remains to compute the terms appearing in $\Gamma_4$.
So, we first determine the expression $B(0):=\lim_{\lambda \searrow 0}B(\lambda)$ 
explicitly by using the results of \cite[Thm.~6.1 and Thm.~6.2]{JN01}. We denote by $X_j$ the multiplication operator by $x_j$ in $\ltwo(\R^2)$,
and recall from \cite[Thm.~6.2(i)]{JN01} that $\Ran(T_3)$ is spanned by 
\begin{equation*}
Q_j = S_2 X_j v
\end{equation*}
for $j = 1,2$. Note that one or both of the $Q_j$ may vanish or they may be linearly dependent, in which case 
the dimension of $\Ran(T_3)$ is strictly smaller than 2.

\begin{Lemma}
The following equality holds:
\begin{align*}
B(0) & =  2 \big(T_3-S_3d(0)^{-1}c(0)\big) \bigg( - \frac{1}{4\pi} \sum_{j = 1}^2{\langle Q_j, \cdot \rangle Q_j} \bigg)^{-1} T_3 v\gamma_1^* \\
& =2 \big(T_3-S_3d(0)^{-1}c(0)\big) \bigg( - \frac{1}{4\pi} \sum_{j = 1}^2 |Q_j\rangle \;\!\langle Q_j|\bigg)^{-1} T_3 v\gamma_1^*,
\end{align*}
where the standard bra-ket notation has been introduced for the last expression.
\end{Lemma}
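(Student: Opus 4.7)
The strategy is to insert the asymptotic expansion \eqref{eq:asympt} of Theorem~\ref{thm_asym_new} into the defining formula \eqref{def:B3} of $B(\lambda)$ and then pass to the limit $\lambda\searrow 0$ term by term. Since $\varrho^\bot(\lambda)=1$ for $\lambda<\tfrac14$, this cutoff disappears in the limit, so that
$$
B(0)=\lim_{\lambda\searrow 0}\sqrt\lambda\,\ln\lambda\,S_2\left[\tfrac{g(\kappa)\eta}{\sqrt\lambda}\big(T_3-S_3d(\kappa)^{-1}c(\kappa)\big)m(\kappa)^{-1}T_3v\gamma_1^*+\tfrac{1}{\eta}\,S_3\,O(1)+O(1)\right].
$$

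I would first dispose of the two remainder terms. Recalling $\eta^{-1}=\tfrac12\ln\lambda-\tfrac{i\pi}{2}$, one has $\sqrt\lambda\,\ln\lambda\cdot O(1)=o(1)$ and $\sqrt\lambda\,\ln\lambda\cdot\eta^{-1}\cdot O(1)=O\big(\sqrt\lambda(\ln\lambda)^2\big)=o(1)$ in norm as $\lambda\searrow 0$. Using $S_2T_3=T_3$ and $S_2S_3=S_3$ the outer $S_2$ can also be dropped from the surviving leading contribution, reducing the question to
$$
B(0)=\lim_{\lambda\searrow 0}\ln\lambda\cdot g(\kappa)\,\eta\cdot\big(T_3-S_3d(\kappa)^{-1}c(\kappa)\big)m(\kappa)^{-1}T_3v\gamma_1^*.
$$

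The next step is to import the explicit leading-order expansions from \cite[Thms.~6.1 and 6.2]{JN01}, namely
$$
g(\kappa)=\eta^{-1}+O(1),\qquad m(\kappa)=-\tfrac{1}{4\pi}\,\eta^{-1}\sum_{j=1}^{2}|Q_j\rangle\langle Q_j|+O(1)\ \ \text{on}\ T_3\H,
$$
with $Q_j=S_2X_jv$. The leading operator of $m(\kappa)$ is invertible on the finite-dimensional subspace $T_3\H=\Span\{Q_1,Q_2\}$, hence
$$
m(\kappa)^{-1}=\eta\,\big(-\tfrac{1}{4\pi}\textstyle\sum_j|Q_j\rangle\langle Q_j|\big)^{-1}+O(\eta^2),
$$
so that $g(\kappa)\,\eta\,m(\kappa)^{-1}=\eta\,\big(-\tfrac{1}{4\pi}\sum_j|Q_j\rangle\langle Q_j|\big)^{-1}+O(\eta^2)$. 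Combining this with the elementary limit $\eta\ln\lambda\to 2$ as $\lambda\searrow 0$, and with the continuity of the factor $\kappa\mapsto d(\kappa)^{-1}c(\kappa)$ at $\kappa=0$, produces the announced formula.

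The only genuine obstacle is correctly extracting the leading coefficients of $g(\kappa)$ and $m(\kappa)$ from \cite{JN01}, which requires a careful reading of Section~6 there; once these are in hand the remainder is a pure order-counting exercise in the small parameter $\eta$, combined with the identity $\eta\ln\lambda\to 2$.
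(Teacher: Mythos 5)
Your proposal follows essentially the same route as the paper's proof: insert the expansion \eqref{eq:asympt} into \eqref{def:B3}, absorb the $\tfrac1\eta S_3\,O(1)+O(1)$ remainders into the factor $\sqrt\lambda\ln\lambda$ (both are $o(1)$, as you say), use $S_2T_3=T_3$ and $S_2S_3=S_3$, invert the leading rank-$\le 2$ part of $m(\kappa)$ on $\Ran(T_3)=\Span\{Q_1,Q_2\}$, and finish with $\eta\ln\lambda\to 2$. The one point that actually needs correcting is the pair of leading coefficients you quote from \cite{JN01}: Eqs.~(6.30) and (6.42) there give $g(\kappa)=\eta^{-1}\big(-\tfrac{\|v\|^2}{2\pi}+\eta h(\kappa)\big)$ and $m(\kappa)=\eta^{-1}\tfrac{\|v\|^2}{8\pi^2}\sum_{j}\langle Q_j,\cdot\rangle Q_j+f(\kappa)$ with $h,f$ bounded, not $g(\kappa)=\eta^{-1}+O(1)$ and $m(\kappa)=-\tfrac{1}{4\pi}\eta^{-1}\sum_j|Q_j\rangle\langle Q_j|+O(1)$ as written. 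Your normalisation happens to reproduce the correct ratio $-\tfrac{\|v\|^2}{2\pi}\cdot\tfrac{8\pi^2}{\|v\|^2}=-4\pi$, so the limit $\lim_{\kappa\to0}g(\kappa)m(\kappa)^{-1}=\big(-\tfrac1{4\pi}\sum_j|Q_j\rangle\langle Q_j|\big)^{-1}$ and hence the final formula come out right; but since the precise constant $-\tfrac{1}{4\pi}$ is the entire quantitative content of the lemma, the coefficient extraction you explicitly defer is exactly the step that must be carried out. The paper does it by writing $g(\kappa)m(\kappa)^{-1}=\big(\eta^{-1}g(\kappa)^{-1}\tfrac{\|v\|^2}{8\pi^2}\sum_j\langle Q_j,\cdot\rangle Q_j+g(\kappa)^{-1}f(\kappa)\big)^{-1}$ and using $\eta^{-1}g(\kappa)^{-1}\to-\tfrac{2\pi}{\|v\|^2}$ and $g(\kappa)^{-1}f(\kappa)\to0$; with that substitution your argument is complete.
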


\begin{proof}
We begin by recalling some facts about the function $g(\kappa)$ and the operator $m(\kappa)$. Firstly, by \cite[Eqn.~(6.30)]{JN01} the scalar-valued function $g$ satisfies
\begin{align*}
g(\kappa) &= \eta^{-1} \left( - \frac{||v||^2}{2\pi} +\eta h(\kappa) \right)
\end{align*}
where $h$ is bounded near zero. Secondly, by \cite[Eqn.~(6.42)]{JN01}
the operator valued function $m$ satisfies 
\begin{align*}
m(\kappa) &= \eta^{-1} \frac{||v||^2}{8\pi^2} \sum_{j=1}^{d}{\langle Q_j, \cdot \rangle Q_j} + f(\kappa)
\end{align*}
where $f$ is bounded. Then one may write
\begin{align*}
g(\kappa) m(\kappa)^{-1} &= g(\kappa) \left(\eta^{-1}\frac{||v||^2}{8\pi^2} \sum_{j=1}^{2}{\langle Q_j, \cdot \rangle Q_j} + f(\kappa) \right)^{-1} \\
&= \left( \eta^{-1} g(\kappa)^{-1} \frac{||v||^2}{8 \pi^2} \sum_{j=1}^2{\langle Q_j, \cdot \rangle Q_j} + g(\kappa)^{-1} f(\kappa) \right)^{-1},
\end{align*}
and observe that 
\begin{align*}
\lim_{\kappa \to 0}{g(\kappa)^{-1} f(\kappa)} &= \lim_{\kappa \to 0}{\eta \left( - \frac{||v||^2}{2\pi} +\eta h(\kappa)\right)^{-1} f(\kappa)} = 0,
\end{align*}
since $f$ is bounded and $\eta\to 0$ as $\kappa \to 0$. We also have the limit
\begin{align*}
\lim_{\kappa \to 0}{\eta^{-1} g(\kappa)^{-1}} &= \lim_{\kappa \to 0}{\left(  - \frac{||v||^2}{2\pi} +\eta h(\kappa)\right)^{-1}} = - \frac{2\pi}{||v||^2},
\end{align*}
since $h$ is bounded and $\eta \to 0$ as $\kappa \to 0$. Thus we find
\begin{align*}
\lim_{\kappa \to 0}{g(\kappa) m(\kappa)^{-1} } &= \lim_{\kappa \to 0}{\bigg( \eta^{-1} g(\kappa)^{-1} \frac{||v||^2}{8 \pi^2} \sum_{j=1}^2{\langle Q_j, \cdot \rangle Q_j} + g(\kappa)^{-1} f(\kappa) \bigg)^{-1}} \\
&= \bigg( - \frac{1}{4\pi} \sum_{j = 1}^2{\langle Q_j, \cdot \rangle Q_j} \bigg)^{-1}.
\end{align*}

It finally remains to insert the expansion \eqref{eq:asympt} into the expression
for $B(\lambda)$ and we obtain
\begin{align*}
B(0)=  \lim_{\lambda \searrow 0}B(\lambda) 
= & \lim_{\lambda \searrow 0} \varrho^\bot (\lambda) \sqrt{\lambda}\ln(\lambda)S_2 \Big[\big(u+vR_0(\lambda+ i0)v\big)^{-1}v\F_0(\lambda)^*
\Big]  \\
= & \lim_{\lambda \searrow 0} \ln(\lambda) 
\eta S_2\big(T_3-S_3d(\kappa)^{-1}c(\kappa)\big)
g(\kappa)m(\kappa)^{-1}T_3v\gamma_1^* \\
= & 2 \big(T_3-S_3d(0)^{-1}c(0)\big)
\bigg( - \frac{1}{4\pi} \sum_{j = 1}^2{\langle Q_j, \cdot \rangle Q_j} \bigg)^{-1} T_3 v\gamma_1^*
\end{align*}
which leads to the claim.
\end{proof}

Let us compute still more explicitly these expressions. 
Recall firstly that $[\gamma_1 f](\omega):= \frac{-i}{2^{3/2}\pi}\int_{\R^2}\d x\;\!(\omega\cdot x)\;\!f(x)$ for suitable $f\in \ltwo(\R^2)$.
Let us also define $\xi_{\pm 1}\in \hs$ with $\xi_{\pm 1}(\theta):=\frac{1}{\sqrt{2\pi}}\e^{\pm i \theta}$ and $\|\xi_{\pm 1}\|_\hs=1$.
As a consequence, for any $\tau \in \hs$ one has
\begin{align*}
[\gamma_1^*\tau](x) & = \frac{i}{2^{3/2}\pi}\int_{\S}\d \omega\;\! (x\cdot \omega) \;\!\tau(\omega) \\
& = \frac{i}{2^{3/2}\pi}\int_0^{2 \pi}\d \theta\;\! \big(x_1\cos(\theta)+x_2\sin(\theta)\big) \;\!\tau\big((\cos(\theta),\sin(\theta)\big) \\
& = \frac{i}{4\sqrt{\pi}}\int_0^{2\pi}\d \theta \big[x_1 \big(\xi_{1}(\theta)+ \xi_{-1}(\theta)\big) -i 
x_2 \big( \xi_{1}(\theta)- \xi_{-1}(\theta)\big)\big]  \tau\big((\cos(\theta),\sin(\theta)\big)  \\
& = \frac{i}{4\sqrt{\pi}}\big[x_1 \big(\langle \xi_{-1}|+\langle \xi_1|\big) -i 
x_2 \big(\langle \xi_{-1}|-\langle \xi_1|\big)\big]  \tau,
\end{align*}
where $\langle \xi_{\pm 1}|\tau:=\int_0^{2\pi}\d \theta \;\!\overline{\xi_{\pm 1}(\theta)}\;\!
\tau\big((\cos(\theta),\sin(\theta)\big)$.
Since $S_3 v \gamma_1^*=0$, one infers that
\begin{align}
\nonumber T_3 \;\!v\;\!\gamma_1^*  = S_2 \;\!v\;\! \gamma_1^* 
& = \frac{i}{4\sqrt{\pi}} S_2 v \big[X_1 \big(\langle \xi_{-1}|+\langle \xi_1|\big) -i 
X_2 \big(\langle \xi_{-1}|-\langle \xi_1|\big)\big]\\
\nonumber & = \frac{i}{4\sqrt{\pi}} \big[\big|Q_1\big\rangle \big\langle \xi_{-1}+ \xi_1\big| -i \big|Q_2\big\rangle \big\langle \xi_{-1}- \xi_1\big|\big]\\
\label{eq:h2} & = \frac{i}{4\sqrt{\pi}} \big[\big|Q_1-iQ_2\big\rangle \big\langle \xi_{-1}\big|+\big|Q_1+i Q_2\big\rangle\big\langle \xi_1\big|\big].
\end{align}

With these expressions at hand, we can finally compute the expression for 
term $\Gamma_4$ of Proposition  \ref{prop:6c}.
For this, we define an orthogonal projection $P_p$ as follows. If $\dim(T_3)=0$ then $P_p:=0$, 
if  $\dim(T_3)=1$, then 
\begin{equation*}
P_p:=\begin{cases}
\big|\frac{1}{\sqrt{2}}\big(\xi_{-1} - \xi_1\big)\big\rangle  
\big\langle \frac{1}{\sqrt{2}}\big(\xi_{-1}- \xi_1\big)\big|
& \hbox{ if } Q_1=0,\\
\big|\frac{1}{\sqrt{2}}\big(\xi_{-1} + \xi_1\big)\big\rangle  
\big\langle \frac{1}{\sqrt{2}}\big(\xi_{-1}+ \xi_1\big)\big|
& \hbox{ if } Q_2=0, \\
\big|\frac{1}{\sqrt{2(1+|\alpha|^2)}}\big((1+i\overline{\alpha})\xi_{-1}+(1-i\overline{\alpha})\xi_1\big)\big\rangle
\big\langle \frac{1}{\sqrt{2(1+|\alpha|^2)}}\big((1+i\overline{\alpha})\xi_{-1}+(1-i\overline{\alpha})\xi_1\big)\big|
& \hbox{ if } Q_2=\alpha Q_1,  
\end{cases}
\end{equation*}
while if $\dim(T_3)=2$ we set $P_p:=\big(|\xi_{-1}\rangle \langle \xi_{-1}|+ |\xi_{1}\rangle\langle \xi_1| \big)$.

\begin{Lemma}\label{lem:Gamma6}
For any $s\in [+\infty, -\infty]$, one has
\begin{equation*}
\Gamma_4(s)
=\left(\begin{smallmatrix} 
1 & 0 \\ 0 & 1
\end{smallmatrix}\right) -\tfrac{1}{2}\frac{2}{1+i2s}P_p 
\left(\begin{smallmatrix} 1 & 1 \\1 & 1 
\end{smallmatrix}\right) 
= \left(\begin{matrix} 
1 -\frac{1}{1+i2s}P_p & -\frac{1}{1+i2s}P_p \\ 
-\frac{1}{1+i2s}P_p & 1-\frac{1}{1+i2s}P_p
\end{matrix}\right). 
\end{equation*}
\end{Lemma}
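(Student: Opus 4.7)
The two matrix equalities in the statement are linked by a trivial rearrangement, so the actual content of the lemma is the identification $N_2(0)B(0) = -P_p$ as operators on $\hs$, after which substitution into the formula for $\Gamma_4$ provided by Proposition~\ref{prop:6c} finishes the proof.

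First I would determine $N_2(0)$: from \eqref{eq:N2}, the factor $\mu^{-1/2}$ together with the expansion $\F_0(\mu) = \gamma_0 + \sqrt\mu\,\gamma_1 + O(\mu)$ and the cancellation $\gamma_0 v S_2 = 0$ gives $N_2(0) = \gamma_1 v S_2 = (S_2 v \gamma_1^*)^*$. Substituting the explicit form of $B(0)$ from the preceding Lemma and applying the adjoint identity $\gamma_1 v S_3 = 0$ to suppress the contribution of $S_3 d(0)^{-1}c(0)$, the product collapses to
\begin{equation*}
N_2(0)B(0) \;=\; 2\,(T_3 v \gamma_1^*)^*\, M^{-1}\, (T_3 v \gamma_1^*), \qquad M := -\tfrac{1}{4\pi}\sum_{j=1}^{2}|Q_j\rangle\langle Q_j|,
\end{equation*}
with $M^{-1}$ understood as the inverse on the finite-dimensional subspace $\Ran(T_3)$, and with $T_3 v \gamma_1^*$ given explicitly by \eqref{eq:h2}.

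I would then perform a case analysis according to $\dim(T_3)\in\{0,1,2\}$, matching the three branches of the definition of $P_p$. The case $\dim(T_3)=0$ is immediate. For $\dim(T_3)=2$, the operators $Q_1, Q_2$ form a basis of $\Ran(T_3)$; I would factor $T_3 v\gamma_1^* = \tfrac{i}{4\sqrt\pi}\,\mathsf{T}\mathsf{U}\mathsf{V}$ where $\mathsf{T}:\C^2\to\Ran(T_3)$ sends $(a_1,a_2)\mapsto a_1 Q_1 + a_2 Q_2$, $\mathsf{U}$ is the $2\times 2$ change-of-basis matrix relating $\{Q_1,Q_2\}$ to $\{Q_1\mp iQ_2\}$, and $\mathsf{V}:\hs\to\C^2$ sends $\tau\mapsto(\langle\xi_{-1}|\tau\rangle,\langle\xi_1|\tau\rangle)$. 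Using $M = -\tfrac{1}{4\pi}\mathsf{T}\mathsf{T}^*$ with $\mathsf{T}$ invertible (so that $\mathsf{T}^*M^{-1}\mathsf{T} = -4\pi\,I_2$) together with the direct check $\mathsf{U}^*\mathsf{U} = 2 I_2$, the product telescopes to $-\mathsf{V}^*\mathsf{V} = -(|\xi_{-1}\rangle\langle\xi_{-1}| + |\xi_1\rangle\langle\xi_1|) = -P_p$.

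The main obstacle is the case $\dim(T_3)=1$, which splits into the three sub-cases $Q_1=0$, $Q_2=0$, or $Q_2=\alpha Q_1$ with both $Q_j\neq 0$. In each, $M$ acts as a scalar times a rank-one projection on $\Ran(T_3)$, so its inverse is elementary and the product $2(T_3 v \gamma_1^*)^* M^{-1}(T_3 v\gamma_1^*)$ becomes a rank-one operator on $\hs$ that must be matched with the corresponding branch in the definition of $P_p$. The delicate bookkeeping happens in the $Q_2=\alpha Q_1$ sub-case, where careful tracking of the complex conjugates arising in the inner products with $\xi_{\pm 1}$ is required to recognise the normalised eigenvector as $\bigl(2(1+|\alpha|^2)\bigr)^{-1/2}\bigl((1+i\bar\alpha)\xi_{-1}+(1-i\bar\alpha)\xi_1\bigr)$, exactly as in the definition of $P_p$.
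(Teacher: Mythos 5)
Your proposal is correct and follows essentially the same route as the paper: reduce everything to the identity $N_2(0)B(0)=-P_p$ via $N_2(0)=\gamma_1 v S_2$, the cancellation $\gamma_1 v S_3=0$, and the explicit formula \eqref{eq:h2}, then match the resulting rank-one or rank-two operator with the corresponding branch of the definition of $P_p$. The only (harmless) difference is in the case $\dim(T_3)=2$, where the paper invokes the explicit pseudo-inverse formula of Lemma \ref{inversodaga} from the Appendix, whereas you factor $T_3 v\gamma_1^*$ through $\C^2$ and use the Gram matrix of $\{Q_1,Q_2\}$; both computations yield $-\big(|\xi_{-1}\rangle\langle\xi_{-1}|+|\xi_{1}\rangle\langle\xi_{1}|\big)$.
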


\begin{proof}
First of all, observe that 
\begin{align*}
N_2(0)\;\!B(0) & = \gamma_1 v S_2 \;\!B(0) \\
& = 2 \gamma_1 v (T_3+S_3) \big(T_3-S_3d(0)^{-1}c(0)\big) \bigg( - \frac{1}{4\pi} \sum_{j = 1}^2 |Q_j\rangle \;\!\langle Q_j|\bigg)^{-1} T_3 v\gamma_1^* \\
& = 2 \gamma_1 v T_3 \bigg( - \frac{1}{4\pi} \sum_{j = 1}^2 |Q_j\rangle \;\!\langle Q_j|\bigg)^{-1} T_3 v\gamma_1^*
\end{align*}
where the algebraic equality $\gamma_1 v S_3=0$ has been taken into account, see
\cite[Lem.~3.2(c)]{RTZ}.

Then, by using the expression \eqref{eq:h2} for 
$T_3 \;\!v\;\!\gamma_1^*$ and for its adjoint, one infers that the following equality holds:
\begin{align}\label{eq:proj}
\nonumber &2\gamma_1 v T_3\left( - \frac{1}{4\pi} \sum_{j = 1}^2 |Q_j\rangle \;\!\langle Q_j|\right)^{-1} T_3 v\gamma_1^* \\
&=-\frac{1}{2}\big[\big|\xi_{-1}\big\rangle \big\langle Q_1-i Q_2\big| +
\big|\xi_1\big\rangle\big\langle Q_1+i Q_2\big|\big]
T_3\bigg( \sum_{j = 1}^2 |Q_j\rangle \;\!\langle Q_j|\bigg)^{-1}\!\!\!\!T_3  
\big[\big|Q_1-iQ_2\big\rangle \big\langle \xi_{-1}\big|+\big|Q_1+i
Q_2\big\rangle\big\langle \xi_1\big|\big].
\end{align}
By a direct computation, one gets that
 \eqref{eq:proj} is equal to $- P_p$, as defined above. 
Note that for the case $\dim(T_3)=2$, we have used a convenient result due to Parra
about the inversion of a matrix on its range. This statement and its proof are 
gathered in the Appendix.

It only remains to observe that
\begin{equation*}
\Gamma_4(s) 
= 
\left(\begin{smallmatrix} 
1 & 0 \\ 0 & 1
\end{smallmatrix}\right) 
+\tfrac{1}{2}\frac{2}{1+i2s}N_2(0)\;\!B(0)
\left(\begin{smallmatrix} 1 & 1 \\1 & 1 
\end{smallmatrix}\right) 
=  \left(\begin{smallmatrix} 
1 & 0 \\ 0 & 1
\end{smallmatrix}\right) 
-\tfrac{1}{2}\frac{2}{1+i2s} \;\!P_p
\left(\begin{smallmatrix} 1 & 1 \\1 & 1 
\end{smallmatrix}\right),
\end{equation*}
which gives us the statement. 
\end{proof}

%--------------------------------------------------------------------------------------
\section{Topological Levinson's theorem}\label{sec_Lev}
\setcounter{equation}{0}
%--------------------------------------------------------------------------------------

In this section, we briefly recall the $C^*$-algebraic framework leading
to a topological version of Levinson's theorem, and show that our current 
investigations fit into this framework.
We refer to the survey paper \cite{Ric16} for additional information on this program
and for the presentation of several examples.

It has already been shown that the unital $C^*$-algebra  
$\big(M_2(\EE)\otimes \K(\hs)\big)^+$ plays the important role of containing
the wave operator $W_-$, once suitable unitary conjugations are applied.
In addition, this algebra contains the ideal $M_2\big(\K(\ltwo(\R_+))\big)\otimes \K(\hs)$
which is nothing but the algebra $\K\big(\ltwo(\R_+;\hs)^2\big)$ of compact operators
on this Hilbert space. Then, as a consequence of Cordes' result, one has the short exact sequence of $C^*$-algebras
$$
0 \longrightarrow \K\big(\ltwo(\R_+;\hs)^2\big) \longrightarrow
\big(M_2(\EE)\otimes \K(\hs)\big)^+ \stackrel{q}{\longrightarrow} 
 \big(M_2\big(C(\hexagon)\big)\otimes \K(\hs)\big)^+ \longrightarrow 0
$$
and the corresponding $6$ terms exact sequence for the $K$-theory of these algebras.
In particular, it is well known that $K_0\Big(\K\big(\ltwo(\R_+;\hs)^2\big)\Big)\cong\Z$
and that $K_1\Big( \big(M_2\big(C(\hexagon)\big)\otimes \K(\hs)\big)^+\Big)\cong\Z$.

Since the operator-valued function $\Gamma= (\Gamma_1,\Gamma_2,\Gamma_3,\Gamma_4,\Gamma_5,\Gamma_6)$ exhibited in Proposition \ref{prop:6c} belongs to 
$\big(M_2\big(C(\hexagon)\big)\otimes \K(\hs)\big)^+$ and is invertible, it defines an element
$[\Gamma]_1$ in the $K_1$-group of this algebra.
In addition, since $W_-\in \big(M_2(\EE)\otimes \K(\hs)\big)^+$ is an isometry and a lift for $\Gamma$, one directly infers from \cite[Prop.~9.2.4.(ii)]{RLL} that
\begin{equation}\label{eq:Lev1}
\ind\big([\Gamma]_1\big) = [1-W^*_-W_-]_0-[1-W_-W_-^*]_0 =-[E_{\rm p}(H)]_0,
\end{equation}
with $E_{\rm p}(H)$ the projection on the subspace spanned by the eigenfunctions of $H$.
Let us emphasize that the equality \eqref{eq:Lev1} corresponds to the topological version of Levinson's theorem: it is a relation (by the index map) between the equivalence class in $K_1$
of quantities related to scattering theory, and the equivalence class in $K_0$ of the projection
on the bound states of $H$. Note that the operator $\Gamma$ contains the scattering operator in its components $\Gamma_2$ and $\Gamma_6$, but also a new contribution related to $p$-resonance in its component $\Gamma_4$.

The standard formulation of Levinson's theorem is an equality between numbers. 
Thus, our last task is to extract a numerical equality from \eqref{eq:Lev1}. In a more general setting we might pair the $K_1$ class of the scattering matrix with the Chern character of a suitable spectral triple, as in \cite{AR}. For this specific case, we proceed in a more elementary way by using the determinant and winding number directly.  
Thus, on $\K\big(\ltwo(\R_+;\hs)^2\big)$, one uses the usual trace (on finite dimensional projections), and on
$$
\big(M_2\big(C(\hexagon)\big)\otimes \K(\hs)\big)^+\cong
\Big(C\big(\hexagon; M_2\big(\K(\hs)\big)\big)\Big)^+
$$
the winding number of the pointwise determinant is the correct notion to be used.

\begin{Remark}\label{rem:conv}
When computing the winding number, and pairing the equality \eqref{eq:Lev1}
with traces, a few conventions about signs have to be taken. As introduced in
\cite[Sec.~2]{Ric16}, we shall turn around the hexagon clockwise, 
and the increase in the winding number is also counted clockwise.
The convention about the path is illustrated in Figure \ref{fig1}, 
with the starting point of $\Gamma_1$ located on the lower left corner.
With this convention, the multiplicative factor $n$, which relates the winding
number computed on $\Gamma$ and the trace applied to $-E_{\rm p}(H)$, is equal to $-1$,
see \cite[Thm.~4.4]{Ric16} for the details.
\end{Remark}

For the computation of the pointwise determinant of the components of $\Gamma$,
let us recall from \cite[Corol.~8.1.7]{Yaf10} that $S(\lambda)-1$ is trace class, 
and that the map $\lambda \mapsto \det\big(S(\lambda)\big)$ is continuous.
Then, based on the following lemma, it will be possible to get simpler expressions
for $\Gamma_1$, $\Gamma_2$, and $\Gamma_6$.

\begin{Lemma}\label{lem:det}
Let $\H$ be a complex Hilbert space and let $c \in \C$ with $|c| = 1$. 
For a unitary operator $U \in \B(\H)$ with $U-1$ trace class, define the operator $B \in \B(\H\oplus\H)$ by
\begin{align*}
B &= \left(\begin{smallmatrix} 
1 & 0 \\ 0 & 1
\end{smallmatrix}\right) +  \tfrac{1}{2} (U-1)
 \left(\begin{smallmatrix} 
1 & c \\ \overline{c} & 1
\end{smallmatrix}\right)
\end{align*}
Then $\sigma(B)\setminus \{1\}=\sigma(U)\setminus\{1\}$, multiplicity counted, 
and $\det(U) = \det(B)$.
\end{Lemma}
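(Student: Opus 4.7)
The key observation is that the scalar $2\times 2$ matrix appearing in $B$ is (twice) a rank-one projection. Indeed, setting
\[
P:=\tfrac{1}{2}\begin{pmatrix} 1 & c \\ \overline{c} & 1\end{pmatrix}\in M_2(\C),
\]
one checks immediately that $P^*=P$ (since $\overline{\overline c}=c$) and that $P^2=P$ because $|c|^2=1$. Its trace is $1$, so $P$ is an orthogonal projection of rank one on $\C^2$. The plan is then to rewrite $B$ as a tensor product and diagonalise $P$.

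With this notation, $B$ acts on $\H\oplus\H\cong\H\otimes\C^2$ as
\[
B=I_{\H\oplus\H}+(U-1)\otimes P.
\]
Choose any unitary $V\in M_2(\C)$ diagonalising $P$, i.e.\ $V P V^{*}=\left(\begin{smallmatrix}1 & 0 \\ 0 & 0\end{smallmatrix}\right)$ (for instance, take $V$ to be the unitary sending $\tfrac{1}{\sqrt{2}}(e_1+c\,e_2)$ to $e_1$ and an orthogonal unit vector to $e_2$). Conjugating by $I_\H\otimes V$ then yields
\[
(I_\H\otimes V)\,B\,(I_\H\otimes V^{*})
=I_{\H\oplus\H}+(U-1)\otimes V P V^{*}
=\begin{pmatrix} U & 0 \\ 0 & I_\H \end{pmatrix}.
\]

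Both conclusions now follow at once. Unitary equivalence preserves the spectrum with multiplicity, so $\sigma(B)=\sigma(U)\cup\{1\}$ counted with multiplicities, which gives $\sigma(B)\setminus\{1\}=\sigma(U)\setminus\{1\}$. For the determinant, note that $B-I=(U-1)\otimes P$ is trace class since $U-1$ is trace class and $P$ is finite rank, so the Fredholm determinant of $B$ is well defined. Unitary invariance of the Fredholm determinant together with the block-diagonal form gives $\det(B)=\det(U)\cdot\det(I_\H)=\det(U)$. There is no real obstacle here; the only point requiring care is to recognise that the hypothesis $|c|=1$ is exactly what makes the $2\times 2$ matrix a rank-one projection, which in turn collapses the second diagonal block to the identity and eliminates any spectral contribution other than $1$.
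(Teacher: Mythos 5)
Your proof is correct, and it takes a genuinely different (and arguably cleaner) route than the paper. The paper argues directly with eigenvectors: for an eigenvalue $\lambda\neq 1$ of $B$ with eigenvector $\left(\begin{smallmatrix}\xi\\ \eta\end{smallmatrix}\right)$ it derives $\eta=\overline{c}\xi$ and $U\xi=\lambda\xi$, and conversely checks that every eigenvector $\xi$ of $U$ yields the eigenvector $\left(\begin{smallmatrix}\xi\\ \overline{c}\xi\end{smallmatrix}\right)$ of $B$; the determinant identity is then read off from the eigenvalue correspondence. You instead observe that $\tfrac12\left(\begin{smallmatrix}1 & c\\ \overline{c} & 1\end{smallmatrix}\right)$ is a rank-one orthogonal projection $P$ precisely because $|c|=1$, write $B=1+(U-1)\otimes P$ on $\H\otimes\C^2$, and diagonalise $P$ to exhibit the explicit unitary equivalence $B\cong U\oplus 1$. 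This single structural observation delivers both conclusions at once, handles the full spectrum rather than only the point spectrum, and makes the multiplicity statement and the invariance of the Fredholm determinant under conjugation completely transparent. The one point worth keeping explicit, which you do note, is that $B-1=(U-1)\otimes P$ is trace class so that $\det(B)$ is a well-defined Fredholm determinant and factors over the block decomposition.
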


\begin{proof}
Let $\lambda$ be an eigenvalue of $B$, with non zero eigenvector $\begin{pmatrix} \xi \\ \eta \end{pmatrix}$, namely $B\begin{pmatrix} \xi \\ \eta \end{pmatrix}=\lambda \begin{pmatrix} \xi \\ \eta \end{pmatrix}$. This equation is equivalent to the two equations
\begin{align*}
\tfrac{1}{2}(U-1)(\xi + c \eta)& =(\lambda-1)\xi, \\
\tfrac{1}{2}(U-1)(\overline{c}\xi + \eta)& =(\lambda-1)\eta. 
\end{align*}
By multiplying the second line by $c$, we infer the relation 
$(\lambda-1)\xi=(\lambda-1)c \eta$. For $\lambda \neq 1$, it follows
that $\eta=\overline{c}\xi$. By inserting this in the first equation, we get
$$
\tfrac{1}{2}(U-1)(\xi + c \eta) = (U-1)\xi = (\lambda-1)\xi,
$$
implying that $U\xi = \lambda \xi$. Note that $\xi\neq 0$, otherwise the eigenvector of $B$ would be the $0$ vector. 

Conversely, if $\xi\neq 0$ satisfies $U\xi =\lambda \xi$, then one easily checks that
the vector $\begin{pmatrix} \xi \\ \overline{c}\xi \end{pmatrix}$ is an eigenvector of $B$
associated with the eigenvalue $\lambda$.
\end{proof}

\begin{Corollary}
One has
\begin{align*}
\det\big(\Gamma_1(s)\big)&=\det\big(S(1)\big), \\
\det\big(\Gamma_2(\ell)\big)&=\det\Big((S\big(\e^{2\ell}\big)\Big), \\
\det\big(\Gamma_6(\ell)\big)&=\det\Big((S\big(\e^{-2\ell}\big)\Big).
\end{align*}
\end{Corollary}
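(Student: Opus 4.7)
The plan is to observe that all three of $\Gamma_1(s)$, $\Gamma_2(\ell)$, and $\Gamma_6(\ell)$ already have the exact matrix form to which Lemma \ref{lem:det} applies, and then to verify the modulus-one hypothesis on the off-diagonal scalar $c$ in each case. Concretely, reading Proposition \ref{prop:6c}, each of the three operators is of the shape
\begin{equation*}
\begin{pmatrix} 1 & 0 \\ 0 & 1 \end{pmatrix} + \tfrac{1}{2}\bigl(S(\lambda)-1\bigr)\begin{pmatrix} 1 & c \\ \overline{c} & 1 \end{pmatrix},
\end{equation*}
with $(\lambda,c)=(1,\phi(s))$ for $\Gamma_1$, with $(\lambda,c)=(\e^{2\ell},-1)$ for $\Gamma_2$, and with $(\lambda,c)=(\e^{-2\ell},1)$ for $\Gamma_6$. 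The trace-class property of $S(\lambda)-1$ required by the lemma has already been quoted from \cite[Corol.~8.1.7]{Yaf10}.

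Next I would verify $|c|=1$ in each case. For $\Gamma_2$ and $\Gamma_6$ this is immediate since $c=\pm 1$. For $\Gamma_1$, recalling the definition $\phi(s)=-\tanh(\pi s)+i\cosh(\pi s)^{-1}$ given after \eqref{eq:big}, one computes
\begin{equation*}
|\phi(s)|^2 = \tanh^2(\pi s) + \cosh^{-2}(\pi s) = \tanh^2(\pi s) + \operatorname{sech}^2(\pi s) = 1,
\end{equation*}
by the standard hyperbolic identity. Hence $c=\phi(s)$ does lie on the unit circle for every $s\in[-\infty,+\infty]$, and Lemma \ref{lem:det} applies uniformly in $s$.

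Finally, applying Lemma \ref{lem:det} with $U=S(\lambda)$ and $\mathcal{H}=\hs$ in each case yields at once $\det(\Gamma_1(s))=\det(S(1))$, $\det(\Gamma_2(\ell))=\det\bigl(S(\e^{2\ell})\bigr)$, and $\det(\Gamma_6(\ell))=\det\bigl(S(\e^{-2\ell})\bigr)$. No serious obstacle is anticipated, since the corollary is essentially a bookkeeping statement: the substantive work has been done in Proposition \ref{prop:6c} (identifying the structure of $\Gamma$) and in Lemma \ref{lem:det} (the spectral identification $\sigma(B)\setminus\{1\}=\sigma(U)\setminus\{1\}$ with multiplicities, which forces equality of Fredholm determinants). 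The only mild point worth flagging is that the identity $|\phi(s)|=1$ is what allows the $\Gamma_1$ case to go through by the same mechanism as the simpler cases $\Gamma_2$, $\Gamma_6$.
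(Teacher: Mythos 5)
Your proposal is correct and coincides with the paper's intended argument: the Corollary is stated without proof precisely because it is the direct application of Lemma \ref{lem:det} to the expressions for $\Gamma_1$, $\Gamma_2$, $\Gamma_6$ from Proposition \ref{prop:6c}, with $c=\phi(s)$, $c=-1$, and $c=1$ respectively. Your explicit verification that $|\phi(s)|^2=\tanh^2(\pi s)+\cosh^{-2}(\pi s)=1$ (including at the endpoints $s=\pm\infty$, where $\phi(s)=\mp1$) is exactly the only nontrivial check needed.
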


Since one trivially gets $\det\big(\Gamma_3(\xi)\big)=1$ for any $\xi \in [+\infty,0]$, and 
$\det\big(\Gamma_5(\xi)\big)=1$ for any $\xi\in [0,+\infty]$, it only remains to compute $\det\big(\Gamma_4(s)\big)$ for $s\in [+\infty,-\infty]$. However, based on the content
of Lemma \ref{lem:Gamma6} and since $P_p$ is a finite dimensional projection,
this computation is easy. By using again Lemma \ref{lem:det} one infers that
\begin{equation}\label{eq:g4}
\det\big(\Gamma_4(s)\big)= \Big(\frac{i2s-1}{i2s+1}\Big)^{\dim(P_p)}. 
\end{equation}

Before the explicit computation of the winding number of the pointwise determinant, 
it is useful to divide the computation of the Fredholm index of $W_-$ into two contributions.
For that purpose, we define the operator $W_S \in \B(\Hrond)$ by the equality
\begin{equation}\label{eq:dubbya-ess}
W_S-1
:=  \big(\tfrac12\big(1-\tanh(\pi A_+)\big)\otimes 1_\hs\big)\big(S(L)-1\big).
\end{equation}
We then directly obtain its main properties: 

\begin{Lemma}\label{lem_WS}
The operator $W_S$ is a Fredholm operator.
\end{Lemma}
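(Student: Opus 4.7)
The plan is to show that $W_S$ sits in a unital $C^*$-subalgebra of $\B(\Hrond)$ whose quotient by the compacts is a commutative algebra of $\K(\hs)^+$-valued functions on a compact space, and to check that the image of $W_S$ in this quotient is unitary. Fredholmness then follows from Atkinson's theorem.

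More precisely, let $\mathcal{F}\subset\B(\ltwo(\R_+))$ be the $C^*$-algebra generated by $\{a(A_+):a\in C([-\infty,+\infty])\}$ and $\{b(L):b\in C([0,+\infty])\}$. A classical computation in the spirit of \cite[Chap.~V]{Cordes} (really a sub-case of the algebra $\EE$ considered above, obtained by fixing the $-\Delta_{\rm N}$ factor) shows that $\K(\ltwo(\R_+))\subset\mathcal{F}$ and that $\mathcal{F}/\K(\ltwo(\R_+))\cong C(\square)$, where $\square$ denotes the boundary of the rectangle $[-\infty,+\infty]\times[0,+\infty]$ in the $(s,\ell)$-variables corresponding to $A_+$ and $L$. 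Tensoring with $\K(\hs)$ and adjoining a unit yields the short exact sequence
\begin{equation*}
0\longrightarrow\K(\Hrond)\longrightarrow\big(\mathcal{F}\otimes\K(\hs)\big)^+\stackrel{q}{\longrightarrow}\big(C(\square,\K(\hs))\big)^+\longrightarrow 0.
\end{equation*}
Since the map $\R_+\ni\lambda\mapsto S(\lambda)-1\in\K(\hs)$ is continuous (by \cite[Corol.~8.1.7]{Yaf10}) and vanishes in norm both as $\lambda\searrow 0$ (by \cite[Thm.~1.1]{RTZ}) and as $\lambda\to+\infty$ (since $\lim_{\lambda\to\infty}vR_0(\lambda+i0)v=0$), the operator $S(L)-1$ belongs to $C_0(\R_+,\K(\hs))\subset\mathcal{F}\otimes\K(\hs)$. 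Combined with $\vartheta(A_+)\otimes 1_\hs\in\mathcal{F}\otimes 1$, this gives $W_S\in\big(\mathcal{F}\otimes\K(\hs)\big)^+$.

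It remains to evaluate $q(W_S)$ on the four edges of $\square$: on the two edges $\ell=0$ and $\ell=+\infty$ the factor $S(L)-1$ vanishes, so $q(W_S)=1$; on the edge $s=+\infty$ the factor $\vartheta(A_+)$ vanishes, so again $q(W_S)=1$; and on the remaining edge $s=-\infty$ one has $\vartheta(-\infty)=1$, so $q(W_S)$ equals the pointwise unitary $S(\ell)\in 1+\K(\hs)$ for $\ell\in[0,+\infty]$. Thus $q(W_S)$ is a continuous unitary-valued function on $\square$, i.e.~an invertible element of the quotient algebra. Atkinson's theorem then gives that $W_S$ is a Fredholm operator on $\Hrond$. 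The only mildly delicate point is verifying that the operator-valued multiplication operator $S(L)-1$ genuinely lies in the tensor product $\mathcal{F}\otimes\K(\hs)$, which is settled by approximating the continuous $\K(\hs)$-valued function $\lambda\mapsto S(\lambda)-1$ by finite sums of elementary tensors $b_i(L)K_i$ with $b_i\in C_0(\R_+)$ and $K_i\in\K(\hs)$.
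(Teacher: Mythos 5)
Your argument is correct, but it takes a different route from the paper. The paper's proof is a direct parametrix argument: it observes that $W_{S^*}$ (the operator obtained by replacing $S$ with $S^*$ in \eqref{eq:dubbya-ess}) inverts $W_S$ modulo compacts, which follows from the compactness of the commutator $[\vartheta(A_+)\otimes 1_\hs,S(L)]$ and from the fact that $(\vartheta-\vartheta^2)(A_+)\otimes 1_\hs$ multiplied by $S(L)-1$ or $S^*(L)-1$ is compact, since $\vartheta-\vartheta^2$ vanishes at $\pm\infty$ and $S(\lambda)-1$ vanishes at $0$ and $\infty$. You instead place $W_S$ in the unitization of the two-generator Cordes algebra $\mathcal{F}\otimes\K(\hs)$, identify the quotient by the compacts with $\K(\hs)^+$-valued functions on the boundary of a square, compute the symbol on the four edges (trivial on three of them, equal to the unitary $S(\ell)$ on the edge $s=-\infty$), and invoke Atkinson. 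Both proofs rest on exactly the same two analytic inputs --- the Cordes commutator result and the vanishing of $S(\lambda)-1$ at both endpoints --- so neither is logically cheaper; the paper's version avoids invoking the structure of the quotient algebra for this particular lemma, while yours has the advantage of already exhibiting the symbol of $W_S$, which is essentially the sextuple $\Gamma_{S,j}$ recomputed later in the proof of Proposition \ref{propi:Lev} (your square is the hexagon with the two edges $\Gamma_{S,3}$, $\Gamma_{S,5}$ collapsed). One cosmetic remark: $\vartheta(A_+)\otimes 1_\hs$ is not an element of $\mathcal{F}\otimes\K(\hs)$ (as $1_\hs$ is not compact) but only a multiplier of it; this is harmless since the product with $S(L)-1\in C_0\big(\R_+,\K(\hs)\big)$ does land in $\mathcal{F}\otimes\K(\hs)$, but you should phrase it that way rather than writing $\vartheta(A_+)\otimes 1_\hs\in\mathcal{F}\otimes 1$.
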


\begin{proof}
It is sufficient to observe that the operator $W_{S^*}$ defines an inverse for $W_S$, 
up to compact operators. Indeed, this can be easily checked by firstly recalling that
$[\vartheta(A_+)\otimes 1_\hs,S(L)]\in \K\big(\ltwo(\R_+;\hs)\big)$, 
with $\vartheta$ defined in \eqref{eq:vartheta}. 
In addition, since  
$S(0) = \lim_{\lambda \to \infty} S(\infty) = 1$ and since $\vartheta-\vartheta^2$ vanishes at $\pm \infty$, operators
of the form $\big(\vartheta(A_+)\otimes 1_\hs-\vartheta^2(A_+)\otimes 1_\hs\big) \big(S(L)-1\big)$
or $\big(\vartheta(A_+)\otimes 1_\hs-\vartheta^2(A_+)\otimes 1_\hs\big) \big(S^*(L)-1\big)$ belong
to $ \K\big(\ltwo(\R_+;\hs)\big)$. 
\end{proof}

For a Freholm operator $W$, let us denote by $\Index(W)$ its Fredholm index. Then the following
statement holds:

\begin{Proposition}\label{propi:Lev}
If $V$ satisfies \eqref{eq_cond_V} with $\rho>11$, then the following equality holds:
\begin{equation}\label{eq:lev2}
\Index(W_S) + \dim(P_p) = -\# \sigma_{\rm p}(H).
\end{equation}
\end{Proposition}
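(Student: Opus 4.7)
The plan is to extract a numerical identity from the $K$-theoretic equality \eqref{eq:Lev1} by pairing both sides with their natural integer-valued trace maps, and then to compute the resulting hexagon winding edge by edge. On $K_0\bigl(\K(\ltwo(\R_+;\hs)^2)\bigr)\cong\Z$ the usual operator trace sends $[E_{\rm p}(H)]_0$ to $\#\sigma_{\rm p}(H)$. On $K_1\bigl((M_2(C(\hexagon))\otimes\K(\hs))^+\bigr)\cong\Z$ the class $[\Gamma]_1$ is detected by the winding number of the pointwise determinant $\det\Gamma$ around the hexagon; this determinant is well-defined because $\Gamma-1$ takes matrix values with trace-class entries (using that $S(\lambda)-1$ is trace class by \cite[Corol.~8.1.7]{Yaf10} and that $P_p$ is finite rank). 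By \cite[Thm.~4.4]{Ric16} together with the sign convention of Remark \ref{rem:conv}, the index map then produces a scalar identity relating $\Wind(\det\Gamma)$ to $\#\sigma_{\rm p}(H)$ via the factor $n=-1$.

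Next I would decompose $\Wind(\det\Gamma)$ into contributions from the six edges using Proposition \ref{prop:6c} and Lemma \ref{lem:Gamma6}. Three edges contribute nothing: by Lemma \ref{lem:det} applied to $\Gamma_1$ (noting $|\phi(s)|=1$ since $\tanh^2+\cosh^{-2}=1$) we get $\det\Gamma_1(s)=\det S(1)$ constant in $s$, and $\det\Gamma_3=\det\Gamma_5=1$ identically. On the remaining edges, Lemma \ref{lem:det} applied to $\Gamma_2$ and $\Gamma_6$ gives $\det\Gamma_2(\ell)=\det S(\e^{2\ell})$ and $\det\Gamma_6(\ell)=\det S(\e^{-2\ell})$; after the reparameterisations $\lambda=\e^{\pm 2\ell}$ and concatenation of the oriented pieces, their combined contribution is exactly the winding of $\lambda\mapsto\det S(\lambda)$ over $\R_+$ (the loop closes because $S(0)=S(\infty)=1$). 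On $\Gamma_4$, Lemma \ref{lem:det} applied with $c=1$ to the formula in Lemma \ref{lem:Gamma6} gives $\det\Gamma_4(s)=\bigl(\tfrac{2is-1}{2is+1}\bigr)^{\dim(P_p)}$, and the short calculation $\int_{-\infty}^{+\infty}\tfrac{4\,\d s}{4s^2+1}=2\pi$ shows that the Möbius image $s\mapsto\tfrac{2is-1}{2is+1}$ traces the unit circle exactly once as $s$ runs from $+\infty$ to $-\infty$, so this edge contributes $\pm\dim(P_p)$ with sign fixed by the same convention.

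The final ingredient is the identification of the scattering winding $\Wind(\det S;\R_+)$ with $\pm\Index(W_S)$. Since $W_S=1+(\vartheta(A_+)\otimes 1_\hs)(S(L)-1)$ is a Toeplitz/Wiener--Hopf-type Fredholm operator whose symbol is the continuous unitary family $S(\lambda)$ with trivial endpoint values, the classical Gohberg--Krein formula applies directly. More in the spirit of the present paper, one may rerun the $C^*$-algebraic argument for $W_S$ itself, viewed in $(\EE\otimes\K(\hs))^+$: its image in the Cordes quotient $(C(\hexagon)\otimes\K(\hs))^+$ equals the identity on five edges and equals $S(\cdot)$ on the sixth (the one on which $\vartheta(-\infty)=1$), so the corresponding index-map pairing delivers the identification at once. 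Substituting this together with the $\Gamma_4$ contribution into the hexagon winding and matching with the pairing value then yields \eqref{eq:lev2}.

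The principal obstacle is sign bookkeeping: the clockwise convention of Remark \ref{rem:conv}, the arrow orientations encoded in Figure \ref{fig1}, the direction of circulation of $\tfrac{2is-1}{2is+1}$ on the unit circle, and the sign in the Toeplitz index--winding correspondence for $W_S$ must all conspire so that $\Index(W_S)$ and $\dim(P_p)$ appear on the same side of \eqref{eq:lev2} with the correct overall sign. Once all these signs are reconciled, the edge-by-edge decomposition is a mechanical assembly and the conclusion \eqref{eq:lev2} follows.
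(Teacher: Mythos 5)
Your overall architecture (pair \eqref{eq:Lev1} with the trace and the winding number, then compute edge by edge) is in the spirit of the paper, and your treatment of $\Gamma_1,\Gamma_3,\Gamma_5$ and of $\Gamma_4$ is correct: the $\Gamma_4$ edge does contribute exactly $\dim(P_p)$. However, there is a genuine gap in the two steps involving the scattering matrix. You assert that the $\Gamma_2$ and $\Gamma_6$ edges combine into ``the winding of $\lambda\mapsto\det S(\lambda)$ over $\R_+$ (the loop closes because $S(0)=S(\infty)=1$)'', and that Gohberg--Kre\u{\i}n then identifies this winding with $\pm\Index(W_S)$. Neither claim survives scrutiny: $S(\lambda)\to 1$ only in operator norm as $\lambda\to\infty$, while the Fredholm determinant is continuous only in trace norm, and by the Birman--Kre\u{\i}n formula $\lim_{\lambda\to\infty}\det S(\lambda)=\e^{-2\pi i\xi(\infty)}=\e^{-\frac{i}{2}\int_{\R^2}V(x)\d x}\neq 1$ in general. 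Thus the pointwise determinant of $\Gamma$ is \emph{discontinuous} at the high-energy vertex joining $\Gamma_2$ to $\Gamma_3$ (it tends to $\e^{-\frac{i}{2}\int V}$ along the edge but equals $1$ at the vertex), its winding number around the hexagon is not defined, and the naive Gohberg--Kre\u{\i}n formula for $W_S$ fails for the same reason. This is precisely why the paper introduces the regularising family $\beta(\lambda)$ in Lemmas \ref{lem_Wb}--\ref{lem:det-loop}, and why the correct analytic formula (Proposition \ref{prop:comput}) carries the extra term $\frac{1}{4\pi}\int_{\R^2}V$; your route, pushed through literally, would either produce an ill-defined quantity or drop that term.

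The repair is essentially the paper's actual proof, which avoids evaluating $\Index(W_S)$ by winding numbers altogether at this stage. One factorises $\F_0W_-\F_0^*=W_S\big(1+W_S^*(W_p-1)\big)+K'$ modulo compacts, where $W_p:=1-N_2\,\Xi\, B$, using the algebraic identities $\Gamma_{S,j}^*\Gamma_{p,j}=\Gamma_{p,j}$ in the quotient; the logarithmic law for Fredholm indices then gives $\Index(W_-)=\Index(W_S)+\Index(W_p)$, with $\Index(W_-)=-\#\sigma_{\rm p}(H)$ because $W_-$ is an isometry onto the orthogonal complement of the bound states. Only the finite-rank piece $W_p$ is fed to the winding-number machine, and there the determinant $\det\Gamma_4(s)=\big(\tfrac{i2s-1}{i2s+1}\big)^{\dim(P_p)}$ genuinely closes up, yielding $\Index(W_p)=\dim(P_p)$ exactly as you computed. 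The quantity $\Index(W_S)$ is left as an abstract integer in \eqref{eq:lev2}; its analytic evaluation, with the necessary $\beta$-regularisation, is deferred to Proposition \ref{prop:comput}. (A minor additional slip: the quotient image of $W_S$ is nontrivial on three edges, namely $\Gamma_{S,1}$, $\Gamma_{S,2}$ and $\Gamma_{S,6}$, not on a single edge as you suggest.)
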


\begin{proof}
Let $W_S$ be as in \eqref{eq:dubbya-ess} and define $W_p := 1-N_2 \Theta B$. 
It follows from \eqref{eq:debase} that
\begin{equation*}
\F_0 W_-\F_0^* 
=    W_S + (W_p-1) + K
\end{equation*}
for a compact operator $K$. 
By construction, the operators $\V\U W_S\U^*\V^*$
and $\V\U (W_p-1) \U^*\V^*$
belong to $\big(M_2(\EE)\otimes \K(\hs)\big)^+$.
For $j \in \{1,2,3,4,5,6\}$ let $\Gamma_{S,j}$ and $\Gamma_{p,j}$ denote the components of the images $q(\V\U W_S\U^*\V^*)$ and $q(\V\U (W_p-1) \U^*\V^* )$ in the quotient algebra. Then a proof similar to Proposition \ref{prop:6c} and Lemma \ref{lem:Gamma6} leads to
\begin{align*}
&\Gamma_{S,1}(s):= 
\left(\begin{smallmatrix} 
1 & 0 \\ 0 & 1
\end{smallmatrix}\right) +  \tfrac{1}{2} \big(S(1)-1\big)
 \left(\begin{smallmatrix} 
1 & \phi(s) \\ \overline{\phi}(s) & 1
\end{smallmatrix}\right),
& s\in [-\infty, +\infty], \\
& \Gamma_{S,2}(\ell):= 
\left(\begin{smallmatrix} 
1 & 0 \\ 0 & 1
\end{smallmatrix}\right) + \tfrac{1}{2}\big(S\big(\e^{2\ell}\big)-1\big)
\left(\begin{smallmatrix} 
1 & -1 \\ -1 & 1
\end{smallmatrix}\right), 
& \ell\in [0,+\infty], \\
&\Gamma_{S,3}(\xi):= \left(\begin{smallmatrix} 1 & 0 \\ 0 & 1 
\end{smallmatrix}\right), & \xi \in [+\infty,0], \\
&\Gamma_{S,4}(s):= 
\left(\begin{smallmatrix} 
1 & 0 \\ 0 & 1
\end{smallmatrix}\right),
& s\in [+\infty, -\infty], \\
&\Gamma_{S,5}(\xi):=\left(\begin{smallmatrix} 1 & 0 \\ 0 & 1 
\end{smallmatrix}\right),  & \xi \in [0,+\infty], \\
&\Gamma_{S,6}(\ell):= 
\left(\begin{smallmatrix} 
1 & 0 \\ 0 & 1
\end{smallmatrix}\right) + \tfrac{1}{2}\big(S\big(\e^{-2\ell}\big)-1\big)
\left(\begin{smallmatrix} 
1 & 1 \\ 1 & 1
\end{smallmatrix}\right), 
& \ell \in [+\infty,0],
\end{align*} 
and to 
\begin{align*}
&\Gamma_{p,j}:= 
\left(\begin{smallmatrix} 
0 & 0 \\ 0 & 0
\end{smallmatrix}\right) ,
& j \in \{1,2,3,5,6\}, \\
&\Gamma_{p,4}(s):=  -\tfrac{1}{2}\frac{2}{1+i2s}P_p 
\left(\begin{smallmatrix} 1 & 1 \\1 & 1 
\end{smallmatrix}\right) ,
& s\in [+\infty, -\infty].
\end{align*}

Explicit computation shows that $\Gamma_{S,j}^* \Gamma_{p,j} = \Gamma_{p,j} = 0$ for $j \neq 4$ and $\Gamma_{S,4}^* \Gamma_{p,4} = \Gamma_{p,4}$. Thus we find  $q(\V\U(W_S^* (W_p-1)\V^*\U^* ) = q(\V\U (W_p-1)\U^*\V^*)$. Since their image under the quotient map agrees, we have $1+W_S^*(W_p-1) = W_p+K$ for some compact operator $K$. 
Now we note the equalities
\begin{align*}
\F_0 W_-\F_0^* &= W_S + (W_p-1) + K = W_S\big(1+ W_S^*(W_p-1)\big) + K',
\end{align*}
for some compact operators $K$ and $K'$, 
which lead to 
\begin{align*}
\Index(W_-) &= \Index(W_S) + \Index\big(1+W_S^*(W_p-1)\big) = \Index(W_S)+\Index(W_p).
\end{align*}

Clearly, one has $\Index(W_-)=-\# \sigma_{\rm p}(H)$.
On the other hand, the value $\Index(W_p)$ can be computed with the winding number 
of the poinwise determinant of $1+\Gamma_{p,4}$, as mentioned in Remark \ref{rem:conv}.
More precisely, one has  
$$
\Index(W_p)= -\Wind\big(\det(1+\Gamma_{p,4})\big)=  -\Wind\big(\det(\Gamma_{4})\big)
$$
with $\det(\Gamma_{4})$ provided in \eqref{eq:g4}.
However, since $s\mapsto \det\big(\Gamma_{4}(s)\big)$ has to be computed from $+\infty$
to $-\infty$, and that on this path the increase of the argument is anti-clockwise, one gets
$\Index(W_p)=  \dim(P_p)$, leading directly to the statement. 
\end{proof}

Our next aim is to compute $\Index(W_-)$ in terms of $\Gamma_S$, as introduced in 
the proof of Proposition \ref{propi:Lev}.
Due to the high energy behaviour of the scattering matrix, some regularization is necessary to obtain an analytic formula. Indeed, even though the map $\lambda\mapsto S(\lambda)$
converges to $1$ in the norm on $\B(\hs)$, the map $\lambda \mapsto \det\big(S(\lambda)\big)$ 
does not converge to $1$. A more precise statement is provided in Lemma \ref{lem:det-loop}.

For $\lambda \in \R_+$ we define the self-adjoint operator $A(\lambda)$ in $\B(\hs)$ by 
\begin{align*}
A(\lambda) &= 4 \tan^{-1}(\lambda) \F_0(\lambda) V \F_0(\lambda)^*.
\end{align*}
The main properties of this operator are gathered in the following statement.

\begin{Lemma}
For each $\lambda \in \R_+$ the operator $A(\lambda)$ is self-adjoint and trace class. 
\end{Lemma}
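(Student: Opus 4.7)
The plan is to obtain self-adjointness from a direct adjoint computation, and the trace class property from a Hilbert--Schmidt factorization of $V$.

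First, since $4\tan^{-1}(\lambda)\in\R$ and $V$ is a real-valued bounded multiplication operator, hence self-adjoint on $\H$, one computes directly
\[
A(\lambda)^*
= 4\tan^{-1}(\lambda)\bigl(\F_0(\lambda)\;\!V\;\!\F_0(\lambda)^*\bigr)^*
= 4\tan^{-1}(\lambda)\;\!\F_0(\lambda)\;\!V^*\;\!\F_0(\lambda)^*
= A(\lambda),
\]
so self-adjointness is immediate.

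For the trace class property, the plan is to exploit the factorization $V=uv^2$ from \eqref{eq:uv}, which rewrites
\[
A(\lambda) = 4\tan^{-1}(\lambda)\;\!\bigl(\F_0(\lambda)\;\!v\bigr)\;\!u\;\!\bigl(v\;\!\F_0(\lambda)^*\bigr).
\]
The core step is to verify that $\F_0(\lambda)\;\!v\in\B(\H,\hs)$ is Hilbert--Schmidt. Using the explicit formula \eqref{eq_diag}, this operator is the integral operator whose kernel, up to a universal constant $c$, is
\[
K_\lambda(\omega,x) = c\;\!\e^{-i\sqrt{\lambda}\;\!\omega\cdot x}\;\!v(x),\qquad \omega\in\S,\ x\in\R^2.
\]
A direct integration in $(\omega,x)\in\S\times\R^2$ then gives
\[
\|\F_0(\lambda)\;\!v\|_{\mathrm{HS}}^2
= 2\pi\;\!|c|^2\int_{\R^2}|v(x)|^2\;\!\d x
= 2\pi\;\!|c|^2\;\!\|V\|_{\lone(\R^2)},
\]
which is finite under \eqref{eq_cond_V} since $\rho>11>2$. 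Taking adjoints, $v\;\!\F_0(\lambda)^* = \bigl(\F_0(\lambda)\;\!v\bigr)^*$ is also Hilbert--Schmidt, and $u$ is bounded on $\H$. Thus the product of two Hilbert--Schmidt operators with a bounded factor sandwiched between them is trace class, so $A(\lambda)$ is trace class, with a quantitative bound of the form $\|A(\lambda)\|_1 \le C\tan^{-1}(\lambda)\;\!\|V\|_{\lone(\R^2)}$ that is likely useful for later arguments concerning the $\lambda$-dependence.

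No substantial obstacle is anticipated. The decomposition $V=uv^2$ is precisely engineered so that $v\in\ltwo(\R^2)$ whenever $V\in\lone(\R^2)$, and the latter is automatic from the pointwise bound in \eqref{eq_cond_V}. The only mild point of care is to identify $\F_0(\lambda)\;\!v$ with its integral operator realization, which follows on a dense subspace from \eqref{eq_diag} and then extends by continuity.
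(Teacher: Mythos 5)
Your proof is correct and follows essentially the same route as the paper: self-adjointness from $V$ being real-valued, and the trace class property from the factorization $A(\lambda)=4\tan^{-1}(\lambda)\,\F_0(\lambda)\,v\,u\,v\,\F_0(\lambda)^*$ with the two outer factors Hilbert--Schmidt. You simply spell out the Hilbert--Schmidt norm computation that the paper leaves implicit.
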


\begin{proof}
Since $V$ is real-valued, the self-adjointness property is clear. 
Based on the definition of $\F_0$ given in \eqref{eq_diag}, we can write explicitly 
the integral kernel of $A(\lambda)$ as
\begin{align*}
A(\lambda,\omega,\omega') &= 2 \tan^{-1}(\lambda) (2\pi)^{-2} \int_{\R^2} e^{-i\sqrt{\lambda} (\omega-\omega') \cdot x} V(x) \, \d x.
\end{align*}
Integrating along the diagonal shows that the trace of $A(\lambda)$ is
\begin{align}\label{eq:trace}
\Tr\big(A(\lambda)\big) &= \frac{1}{\pi} \tan^{-1}(\lambda) \int_{\R^2} V(x)\, \d x.
\end{align}
The computation is justified by writing
\begin{equation}\label{eq:prodH}
4 \tan^{-1}(\lambda) \F_0(\lambda) v u v \F_0(\lambda)^*
\end{equation}
with $u,v$ introduced in \eqref{eq:uv}, and by observing 
that \eqref{eq:prodH} constains two factors which are Hilbert-Schmidt. 
\end{proof}

By the properties of the map $\lambda \mapsto \F_0(\lambda)$ exhibited in \cite[Lem.~4.8]{RTZ}
one infers that the operator-valued map $\lambda\mapsto A(\lambda)\in \B(\hs)$ is continuous and has norm limits
$\lim_{\lambda\searrow 0} A(\lambda)=0$ and $\lim_{\lambda \to \infty}A(\lambda)=0$. As a consequence of \eqref{eq:trace}, one also observes that the map 
$\lambda \mapsto \Tr\big(A(\lambda)\big)$ is continuous on $\R_+$ and satisfies
$\lim_{\lambda\searrow 0} \Tr\big(A(\lambda)\big) = 0$ and 
$\lim_{\lambda \to \infty}\Tr\big(A(\lambda)\big) = \frac12 \int_{\R^2} V(x) \d x$.
 
Based on these observations, let us now define the unitary operator in $\B(\hs)$
$$
\beta(\lambda) := \exp\big(i A(\lambda)\big) 
$$ 
which clearly satisfies
$\det\big(\beta(\lambda)\big) = \e^{i \Tr(A(\lambda))}$
for all $\lambda \in \R_+$.  We also define the operator $W_\beta \in \B(\Hrond)$ by the equality
\begin{equation*}
W_\beta-1 
=   \big(\tfrac12\big(1-\tanh(\pi A_+)\big)\otimes 1_\hs\big)\big(\beta(L)-1\big).
\end{equation*}
Our main interest for this operator is related to the properties shown in the next statement.

\begin{Lemma}\label{lem_Wb}
The operator $W_\beta$ is a Fredholm operator satisfying $\Index(W_\beta) = 0$.
\end{Lemma}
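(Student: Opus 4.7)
The plan is to first establish the Fredholm property by mimicking the proof of Lemma~\ref{lem_WS} with $\beta$ in place of $S$, and then to deduce that the index vanishes via a straightforward homotopy to the identity.

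For the Fredholm property, I would argue that $W_{\beta^*}$ provides an inverse for $W_\beta$ modulo compact operators. The two ingredients needed are (i) that $[\vartheta(A_+)\otimes 1_\hs,\beta(L)]$ belongs to $\K(\Hrond)$, and (ii) that $\beta(\lambda)-1$ vanishes in $\B(\hs)$ as $\lambda\searrow 0$ and as $\lambda\to\infty$. Point (ii) is immediate from $\|A(\lambda)\|\to 0$ at both endpoints (recorded just before the statement) together with the inequality $\|\e^{iA(\lambda)}-1\|\le \|A(\lambda)\|$. For (i), one observes that $\lambda\mapsto A(\lambda)$ is norm-continuous into $\K(\hs)$ (in fact into the trace class) with limits $0$ at $0$ and at $\infty$; hence $\lambda\mapsto \beta(\lambda)-1$ defines an element of $C_0\big(\R_+;\K(\hs)\big)$. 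The usual Cordes-type commutation argument already invoked in Lemma~\ref{lem:commute} then yields compactness of the commutator. Combining (i) and (ii) exactly as in Lemma~\ref{lem_WS} gives $W_\beta W_{\beta^*}-1,\ W_{\beta^*}W_\beta-1\in\K(\Hrond)$, so $W_\beta$ is Fredholm.

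For the vanishing of the index, I would interpolate between $\beta$ and the constant unitary $1$. Set
\begin{equation*}
\beta_t(\lambda):=\exp\big(itA(\lambda)\big),\qquad t\in[0,1],\ \lambda\in\R_+,
\end{equation*}
and define correspondingly
\begin{equation*}
W_{\beta_t}-1:=\big(\tfrac12(1-\tanh(\pi A_+))\otimes 1_\hs\big)\big(\beta_t(L)-1\big).
\end{equation*}
For every $t\in[0,1]$, the family $\lambda\mapsto\beta_t(\lambda)-1$ lies in $C_0\big(\R_+;\K(\hs)\big)$ (uniformly in $t$, via $\|\beta_t(\lambda)-1\|\le t\,\|A(\lambda)\|$), so the Fredholm argument of the previous paragraph applies to each $W_{\beta_t}$.

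The last step is norm continuity of $t\mapsto W_{\beta_t}$. Uniform continuity of $(t,\lambda)\mapsto \e^{itA(\lambda)}$ in $\B(\hs)$, which follows from the bound $\|\beta_t(\lambda)-\beta_s(\lambda)\|\le|t-s|\,\|A(\lambda)\|$ and the boundedness and vanishing at the endpoints of $\|A(\lambda)\|$, transfers to norm continuity of the multiplication operators $\beta_t(L)$ on $\Hrond$, hence to norm continuity of $W_{\beta_t}$. Homotopy invariance of the Fredholm index then gives $\Index(W_\beta)=\Index(W_{\beta_1})=\Index(W_{\beta_0})=\Index(1)=0$. The main (mild) obstacle is checking that $\beta(\lambda)-1$ is indeed a norm-continuous compact-operator-valued function with vanishing limits, but this is supplied by the input data $A(\lambda)\in\K(\hs)$ stated above together with the elementary bound $\|e^{iA}-e^{iA'}\|\le\|A-A'\|$.
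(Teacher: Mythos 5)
Your proof is correct and follows essentially the same strategy as the paper: Fredholmness via the approximate inverse $W_{\beta^*}$ exactly as in Lemma~\ref{lem_WS}, followed by a norm-continuous homotopy of Fredholm operators from $W_\beta$ to the identity and homotopy invariance of the index. The only (immaterial) difference is the choice of homotopy --- you scale the exponent linearly, $\beta_t=\exp\big(itA(\lambda)\big)$, while the paper deforms the cutoff to $4\tan^{-1}\big((1-t)\lambda\big)$ inside $A$ --- and you are in fact slightly more explicit than the paper in verifying that every operator along the path remains Fredholm.
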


\begin{proof}
Observe firstly that we have the norm limits $\lim_{\lambda \searrow 0}\beta(\lambda) = 1$ and that 
$\lim_{\lambda \to \infty} \beta(\lambda) = 1$.
Thus, by the same argument provided in the proof of Lemma \ref{lem_WS} one gets that
the operator $W_{\beta^*}$ defines an inverse for $W_\beta$, up to compact operators. 
It directly follows that $W_\beta$ is a Fredholm operator.

It remains to show that $\Index(W_\beta) = 0$. To see this we consider, for fixed $\lambda \in \R_+$, the map $[0,1] \ni t \mapsto A_t(\lambda)$ with $A_t(\lambda)$ defined by
\begin{align*}
A_t(\lambda) &= 4 \tan^{-1}((1-t) \lambda) \F_0(\lambda) V \F_0(\lambda)^*.
\end{align*}
The map $A_t(\lambda)$ defines a norm continuous path in $\B(\hs)$ from $A(\lambda)$ to $0$. Defining the path $A_t=A_t(L)\in\B(\Hrond)$ 
we then obtain a norm continuous path in $\B(\Hrond)$ from $A$ to $0$. As a consequence, $\beta_t = \exp( i A_t)$ defines a norm continuous path of unitary operators in $\B(\Hrond)$ from $\beta$ to $1$. Hence the path $W_{\beta_t}$ defines a norm continuous path in $\B(\Hrond)$ from $W_\beta$ to the identity, along which the Fredholm index is constant, and so equal to $0$.
\end{proof}

\begin{Lemma}\label{lem:equal-index}
The Fredholm operators $W_S$ and $W_\beta$ satisfy
$\ W_S W_{\beta} - W_{S\beta}  \in \K(\Hrond)\ $
and
$$
\Index(W_S) = \Index(W_{S\beta}).
$$ 
\end{Lemma}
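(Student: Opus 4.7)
The plan is to first show that a direct algebraic expansion of $W_S W_\beta - W_{S\beta}$ collapses to a single product whose compactness follows from the Cordes-type argument already used in the proof of Lemma~\ref{lem_WS}, and then deduce the index equality from multiplicativity of the Fredholm index together with $\Index(W_\beta) = 0$ from Lemma~\ref{lem_Wb}.

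Writing $\Theta := \vartheta(A_+) \otimes 1_\hs$, one has $W_S = 1 + \Theta(S(L) - 1)$, $W_\beta = 1 + \Theta(\beta(L) - 1)$, and $W_{S\beta} = 1 + \Theta(S(L)\beta(L) - 1)$. Using the identity $S(L)\beta(L) - 1 = (S(L) - 1)(\beta(L) - 1) + (S(L) - 1) + (\beta(L) - 1)$, the first-order terms in $(S(L) - 1)$ and $(\beta(L) - 1)$ cancel in the difference $W_S W_\beta - W_{S\beta}$, and one finds
\[
W_S W_\beta - W_{S\beta} = \Theta(S(L) - 1)(\Theta - 1)(\beta(L) - 1).
\]
Next, I would commute $(S(L) - 1)$ past $(\Theta - 1)$ using the compactness of $[\Theta, S(L)]$, recalled in the proof of Lemma~\ref{lem_WS}, to rewrite the right-hand side, modulo $\K(\Hrond)$, as $\Theta(\Theta - 1)(S(L) - 1)(\beta(L) - 1)$.

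The core analytic step is compactness of this last product. The factor $\Theta(\Theta - 1) = -(\vartheta - \vartheta^2)(A_+) \otimes 1_\hs$ has $\vartheta - \vartheta^2$ vanishing at $\pm\infty$, while the multiplication operator $(S(L) - 1)(\beta(L) - 1)$ corresponds to a function in $C_0\big(\R_+; \K(\hs)\big)$. For the $\beta$ factor, this uses that $A(\lambda)$ is trace class with $\lim_{\lambda \searrow 0} A(\lambda) = \lim_{\lambda \to \infty} A(\lambda) = 0$ in operator norm, hence $\beta(\lambda) - 1$ is itself trace class and vanishes at the two endpoints. The Cordes-type compactness argument recalled in the proof of Lemma~\ref{lem_WS} then applies and places the product in $\K(\Hrond)$. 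This is the step I expect to be the main obstacle, since it combines the vanishing of the $A_+$-factor at $\pm\infty$ with the $\K(\hs)$-valued vanishing of the $L$-factor at $0$ and $\infty$, and must be carried out in the tensor product setting over $\hs$.

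Once $W_S W_\beta - W_{S\beta} \in \K(\Hrond)$ is established, multiplicativity of the Fredholm index, applied to $W_S$ and $W_\beta$ (Lemmas~\ref{lem_WS} and~\ref{lem_Wb}), yields
\[
\Index(W_{S\beta}) = \Index(W_S W_\beta) = \Index(W_S) + \Index(W_\beta) = \Index(W_S),
\]
where the last equality uses $\Index(W_\beta) = 0$.
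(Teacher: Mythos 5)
Your proposal is correct and follows essentially the same route as the paper: the paper's proof is a one-line reference to ``one more commutator computation as in Lemmas \ref{lem_WS} and \ref{lem_Wb}'', and your explicit expansion $W_SW_\beta-W_{S\beta}=\Theta(S(L)-1)(\Theta-1)(\beta(L)-1)$ together with the compactness of $[\Theta,S(L)]$ and of $\big(\vartheta(A_+)-\vartheta^2(A_+)\big)\otimes 1_\hs\,(S(L)-1)$ is exactly that computation spelled out. The step you flag as the main obstacle is in fact already settled in the proof of Lemma \ref{lem_WS}, since your final term is that compact operator multiplied on the right by the bounded operator $\beta(L)-1$.
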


\begin{proof}
The equality $W_S W_\beta = W_{S\beta}$ up to compact operators follows from one more commutator computation as provided in the proof of Lemmas \ref{lem_WS} and \ref{lem_Wb}. 
The index claim follows from the fact that $\Index(W_\beta) = 0$ and the composition rule for Fredholm index.
\end{proof}

The next statement shows that the operator $\beta$ provides the correct regularization for the operator $S$, and consequently $W_\beta$ will provide the correct regularization
to the operator $W_S$.
The proof is using some properties of the spectral shift function developed in \cite[Chap.~9]{Yaf10}.

\begin{Lemma}\label{lem:det-loop}
The map $\lambda \mapsto \det\big(S(\lambda)\big)\det\big(\beta(\lambda)\big)$ satisfies
\begin{equation*}
\lim_{\lambda \searrow 0} \det\big(S(\lambda)\big)\det\big(\beta(\lambda)\big) = 1
\end{equation*}
and 
\begin{equation*}
\lim_{\lambda \to \infty} \det\big(S(\lambda)\big)\det\big(\beta(\lambda)\big) = 1.
\end{equation*}
\end{Lemma}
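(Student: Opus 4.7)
The plan is to exploit the Birman--Krein formula $\det\big(S(\lambda)\big)=\exp\big(-2\pi i\,\xi(\lambda)\big)$ (see \cite[Ch.~9]{Yaf10}), in which $\xi$ denotes the spectral shift function of the pair $(H,H_0)$, together with the direct computation $\det\big(\beta(\lambda)\big)=\exp\big(i\Tr(A(\lambda))\big)=\exp\big(\tfrac{i}{\pi}\tan^{-1}(\lambda)\int_{\R^2}V\big)$ coming from \eqref{eq:trace}. The product then rewrites as
$$
\det\big(S(\lambda)\big)\det\big(\beta(\lambda)\big)
=\exp\Big(-2\pi i\,\xi(\lambda) + \tfrac{i}{\pi}\tan^{-1}(\lambda)\int_{\R^2} V(x)\,\d x\Big),
$$
so it suffices to show that in each limit the exponent converges to an element of $2\pi i\,\Z$.

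For the high-energy limit, I would invoke the standard first Born approximation for the spectral shift function \cite[Ch.~9]{Yaf10}: under the decay assumption \eqref{eq_cond_V} with $\rho>11$, one has
$$
\lim_{\lambda\to\infty}\xi(\lambda)=\frac{1}{4\pi}\int_{\R^2}V(x)\,\d x.
$$
Combined with $\tan^{-1}(\lambda)\to\pi/2$, this produces an exact cancellation $-\tfrac{i}{2}\int V+\tfrac{i}{2}\int V=0$ in the exponent, and the product tends to $1$. The operator $\beta$ was engineered precisely to enforce this cancellation.

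For the low-energy limit, the scalar term $\tfrac{i}{\pi}\tan^{-1}(\lambda)\int V$ vanishes trivially, so the remaining task is to prove $\xi(\lambda)\to 0$ modulo $\Z$ as $\lambda\searrow 0$. My plan is to upgrade the operator-norm convergence $S(\lambda)\to 1$ in $\B(\hs)$ obtained in \cite[Thm.~1.1]{RTZ} to a trace-norm statement by substituting the Jensen--Nenciu expansion \eqref{eq_JN} into $S(\lambda)-1=-2\pi i\,\F_0(\lambda)v\,\M(\kappa)^{-1}v\F_0(\lambda)^*$ and invoking the algebraic cancellations $\gamma_0 vS_2=0$ and $\gamma_1 vS_3=0$ from \cite[Lem.~3.2]{RTZ}. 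These cancellations annihilate the most singular prefactors in $\M(\kappa)^{-1}$, leaving a remainder whose trace norm tends to $0$ as $\lambda\searrow 0$; this yields $\det\big(S(\lambda)\big)\to 1$.

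The principal obstacle is the low-energy analysis: every singular term of \eqref{eq_JN} must be paired against the Taylor expansion \eqref{eq_exp_F_0} of $\F_0(\lambda)$ to verify that the asymptotics are strictly stronger than mere norm convergence, and one has to ensure that the residual pieces survive as trace-class operators with controllable trace norm. By contrast, the high-energy limit reduces to a standard Born-approximation computation and is, by design, matched exactly by the $\tan^{-1}$ factor built into $A(\lambda)$.
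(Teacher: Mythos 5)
Your proposal is correct and follows the same backbone as the paper's proof: rewrite the product as $\exp\big(-2\pi i\xi(\lambda)+i\Tr(A(\lambda))\big)$ via the Birman--Kre\u{\i}n formula and \eqref{eq:trace}, and obtain the high-energy cancellation from $\lim_{\lambda\to\infty}\xi(\lambda)=\frac{1}{4\pi}\int_{\R^2}V$ (the paper phrases this through Yafaev's regularised spectral shift function $\xi_2=\xi-\frac{1}{4\pi}\int V$ with $\xi_2(\infty)=0$, which is the same fact). The one place where you genuinely diverge is the low-energy limit: the paper simply invokes $S(0)=1$ to assert $\lim_{\lambda\searrow 0}\xi(\lambda)\in\Z$, whereas you propose to prove $\|S(\lambda)-1\|_1\to 0$ by feeding the Jensen--Nenciu expansion \eqref{eq_JN} into the stationary formula for $S(\lambda)-1$ and using the cancellations $\gamma_0vS_2=0$, $\gamma_1vS_3=0$. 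Your route is heavier but actually more honest: operator-norm convergence $S(\lambda)\to 1$ does not by itself control $\det S(\lambda)$, so the trace-norm upgrade (which does go through, since the $\gamma_jv$ are Hilbert--Schmidt and the cancellations kill the $\lambda^{-1}$ and $\ln\lambda$ singularities, leaving $O(1/|\ln\lambda|)$ and $O(\lambda\ln\lambda)$ remainders) supplies a justification the paper leaves implicit. The price is that your low-energy step remains a plan rather than a carried-out estimate, while the paper's version, granting the integer limit of $\xi$ at zero, is a two-line computation.
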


\begin{proof}
Let us firstly recall the Birman-Kre\u {\i}n formula linking the scattering operator and the spectral shift function, namely $\det\big(S(\lambda)\big)=\e^{-2\pi i \xi(\lambda)}$.
By \cite[Thm.~9.1.14]{Yaf10} there exists a continuous function $\xi_2$ (the regularised spectral shift function) such that
\begin{align*}
\xi(\lambda) &= \xi_2(\lambda) + \frac{1}{4\pi} \int_{\R^2} V(x) \, \d x,
\end{align*}
with $\lim_{\lambda \to \infty}\xi_2(\lambda) = 0$.
In addition, since $S(0)=1$, it follows that $\lim_{\lambda \searrow 0 }\xi(\lambda) \in \Z$. We can thus consider the function $\lambda \mapsto f(\lambda)$ with 
\begin{align*}
f(\lambda):=& -2\pi i\xi(\lambda) +i \Tr \big(A(\lambda)\big) \\
=& -2 \pi i \Big[\xi_2(\lambda) + \frac{1}{4\pi} \left(1- \frac{2}{\pi} \tan^{-1}(\lambda) \right) \int_{\R^2} V(x)\, \d x \Big],
\end{align*}
which satisfies $\lim_{\lambda \to \infty}f(\lambda)=0$
and $\lim_{\lambda \searrow 0}f(\lambda) \in (-2\pi i) \Z$.
It finally remains to observe that
\begin{equation*}
 \det\big(S(\lambda)\big)\det\big(\beta(\lambda)\big)  = \e^{-2\pi i \xi(\lambda)}
 \e^{i\Tr (A(\lambda))}= \e^{f(\lambda)}
\end{equation*}
and so the map $\lambda \mapsto \det\big(S(\lambda)\big)\det\big(\beta(\lambda)\big)$
satisfies the properties stated. 
\end{proof}

We finally recall from \cite[Eq.~(9.1.22)]{Yaf10} that the spectral shift function $\xi$ satisfies for $\lambda > 0$ the equality
\begin{equation}\label{eq:SS'}
\Tr\big(S(\lambda)^* S'(\lambda)\big) = -2\pi i \xi'(\lambda),
\end{equation}
with the differentiability of $\xi$ being guaranteed by \cite[Thm.~9.1.18]{Yaf10}.
We can thus state the main result of this section.

\begin{Proposition}\label{prop:comput}
The following equality holds: 
\begin{equation*}
\Index(W_S) = \frac{1}{2\pi i} \int_0^\infty \Tr\big(S(\lambda)^*S'(\lambda)\big) \, \d \lambda + \frac{1}{4\pi} \int_{\R^2} V(x)\, \d x.
\end{equation*}
\end{Proposition}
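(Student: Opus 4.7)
The plan is to reduce via Lemma \ref{lem:equal-index} to $\Index(W_{S\beta})$, then apply the hexagon machinery of Section \ref{sec_Lev} to express that index as the winding number of $\lambda \mapsto \det\bigl(S(\lambda)\beta(\lambda)\bigr)$ over $\lambda \in [0,\infty]$, and finally compute that winding by integrating its logarithmic derivative.

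First, by Lemma \ref{lem:equal-index} it suffices to compute $\Index(W_{S\beta})$. I run on $W_{S\beta}$ the same $C^*$-algebraic argument used for $W_S$ in the proof of Proposition \ref{propi:Lev}: the conjugated operator $\V\U W_{S\beta}\U^*\V^*$ lies in $\bigl(M_2(\EE)\otimes\K(\hs)\bigr)^+$, and its image $q(\V\U W_{S\beta}\U^*\V^*)$ is the hexagon function $\Gamma_{S\beta}$ whose components $\Gamma_{S\beta,j}$ are obtained from the $\Gamma_{S,j}$ listed there by replacing $S$ with $S\beta$ (in particular $\Gamma_{S\beta,4}$ is identically the identity). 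The topological index formula then gives $\Index(W_{S\beta}) = -\Wind(\det\Gamma_{S\beta})$, using the sign and orientation conventions of Remark \ref{rem:conv}.

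Next, I collapse the winding onto a single curve. By Lemma \ref{lem:det}, $\det\Gamma_{S\beta,1}(s)=\det\bigl(S(1)\beta(1)\bigr)$ is constant in $s$, $\det\Gamma_{S\beta,2}(\ell)=\det(S\beta)(\e^{2\ell})$, $\det\Gamma_{S\beta,6}(\ell)=\det(S\beta)(\e^{-2\ell})$, and $\det\Gamma_{S\beta,j}\equiv 1$ for $j\in\{3,4,5\}$. Only $\Gamma_{S\beta,2}$ (covering $\lambda\in[1,\infty]$) and $\Gamma_{S\beta,6}$ (covering $\lambda\in[0,1]$) therefore contribute. By Lemma \ref{lem:det-loop}, $\det(S\beta)(\lambda)$ tends to $1$ at both $\lambda=0$ and $\lambda=\infty$, so the concatenation of $\Gamma_{S\beta,2}$ and $\Gamma_{S\beta,6}$ along the hexagon traces a genuine closed loop in $\C^*$, and its winding equals (up to the convention of Remark \ref{rem:conv}) the change of argument of $\lambda\mapsto\det\bigl(S(\lambda)\beta(\lambda)\bigr)$ as $\lambda$ runs from $0$ to $\infty$.

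Finally, I compute this change of argument explicitly. From the Birman--Kre\u{\i}n formula $\det S(\lambda)=\e^{-2\pi i\xi(\lambda)}$ and $\det\beta(\lambda)=\e^{i\Tr A(\lambda)}$, together with the identity \eqref{eq:SS'}, any continuous branch of the logarithm satisfies
$$
\frac{\d}{\d\lambda}\log\det\bigl(S(\lambda)\beta(\lambda)\bigr) = \Tr\bigl(S(\lambda)^*S'(\lambda)\bigr) + i\bigl(\Tr A\bigr)'(\lambda).
$$
Integrating over $\R_+$ and using $\Tr A(0)=0$ and $\Tr A(\infty)=\tfrac12\int_{\R^2}V(x)\,\d x$ (recorded just before Lemma \ref{lem_Wb}) yields the (counter-clockwise) winding
$$
\frac{1}{2\pi i}\int_0^\infty\Tr\bigl(S(\lambda)^*S'(\lambda)\bigr)\,\d\lambda + \frac{1}{4\pi}\int_{\R^2}V(x)\,\d x.
$$
After the same reconciliation of signs as in the treatment of $\Gamma_4$ in Proposition \ref{propi:Lev}, this quantity equals $-\Wind(\det\Gamma_{S\beta})=\Index(W_{S\beta})=\Index(W_S)$. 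The delicate point will be the orientation and sign bookkeeping: one must verify that the clockwise traversal of the hexagon from Remark \ref{rem:conv}, composed with the negative sign in the index formula and with the fact that $\Gamma_2$ precedes $\Gamma_6$ along the path while their $\lambda$-parametrizations cover $[1,\infty]$ and $[0,1]$ respectively, produces the plus signs in front of both terms on the right-hand side, precisely as happened for $P_p$ in Proposition \ref{propi:Lev}.
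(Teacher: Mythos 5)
Your proposal is correct and reaches the paper's formula by the same overall strategy: reduce to $\Index(W_{S\beta})$ via Lemma \ref{lem:equal-index}, identify that index with the winding number of $\lambda\mapsto\det\big(S(\lambda)\beta(\lambda)\big)$, and evaluate the winding through the Birman--Kre\u{\i}n formula, \eqref{eq:SS'}, and the limits of $\Tr\big(A(\lambda)\big)$. The one place where you genuinely diverge is the middle step. The paper treats $W_{S\beta}$ as a Toeplitz-type operator with symbol $S\beta$ and quotes Gohberg--Kre\u{\i}n theory (via \cite{GK} and \cite[Thm.~4.9]{Lesch}) to get $\Index(W_{S\beta})=\Wind\big(\det(S\beta)\big)$ directly, with the standard counter-clockwise convention. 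You instead push $W_{S\beta}$ through the Cordes hexagon: you compute the six boundary components $\Gamma_{S\beta,j}$, use Lemma \ref{lem:det} to reduce the pointwise determinant to $\det\big((S\beta)(\lambda)\big)$ on edges $2$ and $6$ and to constants elsewhere, and then invoke the same index-equals-winding principle of Remark \ref{rem:conv} that the paper uses for $\Gamma_4$. Your route buys internal consistency -- a single index theorem handles both the $P_p$ contribution and the scattering contribution, with no external Toeplitz citation -- at the cost of the sign and orientation bookkeeping you rightly flag: the hexagon winding of Remark \ref{rem:conv} is the negative of the standard one, edge $2$ covers $\lambda\in[1,\infty]$ before edge $6$ covers $\lambda\in[0,1]$ (harmless, since the winding of a closed loop is independent of base point), and Lemma \ref{lem:det-loop} is exactly what makes the concatenation a closed loop in $\C^*$. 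All of these checks go through, so the argument is sound; it would just need the sign verification written out rather than deferred.
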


\begin{proof}
By Lemma \ref{lem:equal-index} one has $\Index(W_S) = \Index(W_{S\beta})$. 
Note that by \cite[Thm.~3.5(a)]{simon} we have $\det(S\beta) = \det(S) \det(\beta)$. By Lemma \ref{lem:det-loop}, $\det(S\beta)$ defines a loop, and using Gohberg-Kre\u{\i}n theory (cf. \cite{GK} and \cite[Thm.~4.9]{Lesch}) we can compute the index of $W_{S\beta}$ as
\begin{align*}
\Index(W_{S\beta}) &= \Wind\big(\det(S\beta)\big) \\
&= \frac{1}{2\pi i} \int_0^\infty \frac{\frac{\d}{\d \lambda}\big[ \det\big(S(\lambda)\big) \det\big(\beta(\lambda)\big)\big]}{\det\big(S(\lambda)\big) \det\big(\beta(\lambda)\big)} \, \d \lambda \\
&= \frac{1}{2\pi i} \int_0^\infty \frac{\d}{\d \lambda} \Big( -2\pi i\xi(\lambda) +i \Tr\big(A(\lambda)\big) \Big) \, \d \lambda \\
&= \frac{1}{2\pi i} \int_0^\infty \left( -2\pi i \xi'(\lambda) \right)\, \d \lambda + \frac{1}{2\pi} \left[\Tr\big(A(\infty)\big) - \Tr\big(A(0)\big) \right] \\
&= \frac{1}{2\pi i} \int_0^\infty \Tr\big(S(\lambda)^* S'(\lambda)\big) \, \d \lambda + \frac{1}{4\pi} \int_{\R^2} V(x) \, \d x,
\end{align*}
as claimed.
\end{proof}

By collecting the content of Proposition \ref{propi:Lev} and of Proposition \ref{prop:comput},
we can now confirm the statement of \cite[Thm.~6.3]{BGD88}, namely:

\begin{Theorem}\label{thm:analytic-formula}
If $V$ satisfies \eqref{eq_cond_V} with $\rho>11$, then the following equality holds:
\begin{equation*}
\frac{1}{2\pi i} \int_0^\infty \Tr\big(S(\lambda)^*S'(\lambda)\big) \, \d \lambda + \frac{1}{4\pi} \int_{\R^2} V(x)\, \d x + \dim(P_p) = -\# \sigma_{\rm p}(H).
\end{equation*}
\end{Theorem}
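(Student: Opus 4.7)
The plan is to combine the two propositions proved immediately before the theorem, since at this stage all the real work has been done. Concretely, Proposition~\ref{propi:Lev} has already established the topological identity
\begin{equation*}
\Index(W_S) + \dim(P_p) = -\# \sigma_{\rm p}(H),
\end{equation*}
which isolates the $p$-resonance contribution on the left-hand side. Proposition~\ref{prop:comput} has in turn produced the analytic expression
\begin{equation*}
\Index(W_S) = \frac{1}{2\pi i} \int_0^\infty \Tr\big(S(\lambda)^*S'(\lambda)\big) \, \d \lambda + \frac{1}{4\pi} \int_{\R^2} V(x)\, \d x.
\end{equation*}
Hence the only step to carry out is the direct substitution of the second identity into the first, which yields the claimed formula. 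I would write this as a one-line proof flagging the two inputs.

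If I were asked to reprove the theorem from scratch rather than quoting these two propositions, my strategy would closely follow the architecture of the paper. First, I would use the $K$-theoretic exact sequence attached to Cordes' $C^*$-algebra $\EE$ together with the lift property of $W_-$ and the index map from \cite[Prop.~9.2.4.(ii)]{RLL} to convert the Fredholm index of $W_-$ into a winding number of the boundary symbol $\Gamma$. Second, I would split $W_-$ (modulo compacts) as a product $W_S\cdot W_p$ and use the explicit form of $\Gamma_4$ computed in Lemma~\ref{lem:Gamma6}, together with Lemma~\ref{lem:det} applied to the $2\times2$ block structure, to identify the $p$-resonance winding as exactly $\dim(P_p)$. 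This step accounts for why $\dim(P_p)$ appears on the left-hand side, not inside the scattering term.

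The genuinely delicate point, and what I would expect to be the main obstacle, is the extraction of the analytic formula for $\Index(W_S)$: $\det(S(\lambda))$ does \emph{not} return to $1$ as $\lambda\to\infty$, so the scattering winding number is not literally well-defined. The remedy, already implemented in Lemma~\ref{lem:det-loop}, is to introduce the trace-class regulariser $A(\lambda)=4\tan^{-1}(\lambda)\F_0(\lambda)V\F_0(\lambda)^*$ with $\Tr(A(\lambda))\to\tfrac12\int_{\R^2}V$ at infinity, and use the Birman-Kre\u{\i}n identity together with the regularised spectral shift function of \cite[Thm.~9.1.14]{Yaf10} to make $\det(S\beta)$ into a genuine loop; Gohberg-Kre\u{\i}n theory then converts its winding into $\Index(W_{S\beta})=\Index(W_S)$, with \eqref{eq:SS'} producing the $\Tr(S^*S')$ integrand and the boundary values of $\Tr(A)$ producing the potential integral $\tfrac1{4\pi}\int_{\R^2}V$. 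With these two blocks in hand the theorem is just their sum.
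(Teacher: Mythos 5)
Your proposal is correct and matches the paper exactly: the theorem is obtained by substituting the analytic formula for $\Index(W_S)$ from Proposition~\ref{prop:comput} into the index identity of Proposition~\ref{propi:Lev}, which is precisely how the paper derives it (the paper gives no separate proof beyond "collecting the content" of those two propositions). Your supplementary sketch of how one would reprove those inputs also faithfully reflects the paper's architecture.
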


We can then complement the content of \cite[Thm.~9.1.14]{Yaf10} about the spectral shift function.

\begin{Corollary}
If $V$ satisfies \eqref{eq_cond_V} with $\rho>11$, then the spectral shift function for the pair $(H,H_0)$ satisfies 
\begin{align*}
\lim_{\varepsilon \searrow 0}{\xi(\varepsilon)} &= -\# \sigma_{\rm p}(H) - \dim(P_p).
\end{align*}
\end{Corollary}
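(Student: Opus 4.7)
The plan is to extract $\lim_{\varepsilon\searrow 0}\xi(\varepsilon)$ by integrating the Birman--Kre\u{\i}n relation \eqref{eq:SS'} between $\Tr(S^*S')$ and $\xi'$ on $\R_+$, then matching the result against the analytic formula of Theorem \ref{thm:analytic-formula}. All the ingredients are already at hand in the excerpt; the argument is essentially a one-line manipulation.

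First I would use the identity $\Tr\big(S(\lambda)^*S'(\lambda)\big)=-2\pi i\,\xi'(\lambda)$ from \eqref{eq:SS'} together with the fundamental theorem of calculus to rewrite the scattering integral as
\begin{equation*}
\frac{1}{2\pi i}\int_0^\infty \Tr\big(S(\lambda)^*S'(\lambda)\big)\,\d\lambda
= -\int_0^\infty \xi'(\lambda)\,\d\lambda
= \lim_{\varepsilon\searrow 0}\xi(\varepsilon) - \lim_{\lambda\to\infty}\xi(\lambda).
\end{equation*}
Both limits exist by \cite[Thm.~9.1.14]{Yaf10} and the decomposition $\xi(\lambda)=\xi_2(\lambda)+\tfrac{1}{4\pi}\int_{\R^2}V(x)\,\d x$ recalled in the proof of Lemma \ref{lem:det-loop}, with $\xi_2$ continuous on $[0,\infty)$ and $\lim_{\lambda\to\infty}\xi_2(\lambda)=0$. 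Consequently $\lim_{\lambda\to\infty}\xi(\lambda)=\tfrac{1}{4\pi}\int_{\R^2}V(x)\,\d x$.

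Substituting this value into the displayed identity gives
\begin{equation*}
\frac{1}{2\pi i}\int_0^\infty \Tr\big(S(\lambda)^*S'(\lambda)\big)\,\d\lambda
+\frac{1}{4\pi}\int_{\R^2}V(x)\,\d x
= \lim_{\varepsilon\searrow 0}\xi(\varepsilon),
\end{equation*}
and comparing with Theorem \ref{thm:analytic-formula} yields immediately
\begin{equation*}
\lim_{\varepsilon\searrow 0}\xi(\varepsilon) + \dim(P_p) = -\#\sigma_{\rm p}(H),
\end{equation*}
which is the claim after rearrangement. There is no real obstacle: the delicate analytic inputs, namely the continuity and differentiability of $\xi$ on $\R_+$, its behaviour at infinity, and the trace-class property of $S(\lambda)-1$, are all imported from \cite{Yaf10}, and the topological identity of Theorem \ref{thm:analytic-formula} does the rest. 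The only point worth double-checking is the sign conventions connecting $\xi$, $\det(S)$, and the Birman--Kre\u{\i}n formula, so that the integration is oriented consistently with $\xi(\infty)>\xi(0^+)$ direction.
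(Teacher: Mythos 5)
Your argument is correct and coincides with the paper's own proof: both evaluate $\xi(\infty)=\tfrac{1}{4\pi}\int_{\R^2}V(x)\,\d x$ from \cite[Thm.~9.1.14]{Yaf10}, use \eqref{eq:SS'} and the fundamental theorem of calculus to identify $\lim_{\varepsilon\searrow 0}\xi(\varepsilon)$ with the left-hand side of Theorem \ref{thm:analytic-formula} minus $\dim(P_p)$, and conclude. No gaps.
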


\begin{proof}
By \cite[Thm.~9.1.14]{Yaf10} we have
\begin{align*}
\xi(\infty) &= \frac{1}{4\pi} \int_{\R^2}{V(x)\, \d x}.
\end{align*}
By taking the equality \eqref{eq:SS'} into account, we observe that 
\begin{align*}
\lim_{\varepsilon \searrow 0}{\xi(\varepsilon)} &= - \int_0^\infty{\xi'(\lambda)\, \d \lambda} + \xi(\infty) \\
&= \frac{1}{2\pi i} \int_0^\infty{\Tr\big(S(\lambda)^*S'(\lambda)\big) \, \d \lambda} + \frac{1}{4\pi} \int_{\R^2}{V(x)\, \d x}.
\end{align*}
The result now follows from Theorem \ref{thm:analytic-formula}.
\end{proof}

\section*{Appendix}
 
In the following statement, the standard bra-ket notation is freely used.

\begin{Lemma}\label{inversodaga}
Let $\H$ be a complex Hilbert space, and let $\varphi,\psi\in \H$ be linearly independent. 
Consider $c\in\C$ with $|c|=1$, define $T:\H\to \H$ by
\begin{equation*}
T:=|\varphi\rangle \langle\varphi|+c|\psi\rangle \langle\psi| , 
\end{equation*}
and set 
\begin{equation*}
k:=||\varphi||^2\,||\psi||^2-|\langle\varphi,\psi\rangle|^2 >0.
\end{equation*}
Then, the operator $T^\dagger$ defined by 
\begin{align*}
T^\dagger:=\frac{1}{ck^2}\Big[\big(c||\psi||^4+|\langle\varphi,\psi\rangle|^2\big)&|\varphi\rangle \langle\varphi| -\big(c||\psi||^2\langle\varphi,\psi\rangle+||\varphi||^2\langle\varphi,\psi\rangle\big)|\varphi\rangle \langle\psi|\\
-&\big(c||\psi||^2\langle\psi,\varphi\rangle+||\varphi||^2\langle\psi,\varphi\rangle\big)|\psi\rangle \langle\varphi|+\big(||\varphi||^4+c|\langle\varphi,\psi\rangle|^2\big)|\psi\rangle \langle\psi|  \Big]
\end{align*}
satisfies $TT^\dagger=T^\dagger T=P_{\Ran (T)}$, the projection on the range of $T$.
Furthermore, the following equalities 
hold:\ 
$\langle \varphi,  T^\dagger \varphi \rangle = 1$, \ $\langle \psi, T^\dagger  \psi \rangle  = \overline{c}$,  \ $\langle \varphi, T^\dagger \psi \rangle =0$, and 
$\langle \psi, T^\dagger  \varphi \rangle  =0$.
\end{Lemma}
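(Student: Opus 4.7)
The plan is to reduce the entire verification to a $2 \times 2$ matrix inversion on the two-dimensional subspace $V := \Span\{\varphi,\psi\}$. Since $\Ran(T) = V$, we have $P_{\Ran(T)} = P_V$; and since the proposed operator $T^\dagger$ is a linear combination of the four rank-one operators $|\varphi\rangle\langle\varphi|$, $|\varphi\rangle\langle\psi|$, $|\psi\rangle\langle\varphi|$, $|\psi\rangle\langle\psi|$, both $T$ and $T^\dagger$ have range contained in $V$ and vanish on $V^\perp$. It therefore suffices to check that $T|_V$ and $T^\dagger|_V$ are mutual inverses on the two-dimensional space $V$.

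To handle the non-orthonormality of $\{\varphi,\psi\}$ cleanly, I would introduce the symbolic row of kets $\Phi := (|\varphi\rangle,|\psi\rangle)$ with formal adjoint $\Phi^* := \binom{\langle\varphi|}{\langle\psi|}$, so that $\Phi^*\Phi = G$, the Gram matrix of $\{\varphi,\psi\}$, with $\det(G) = k > 0$ by the linear independence assumption. Setting $D := \mathrm{diag}(1,c)$, this gives the compact factorisation $T = \Phi D \Phi^*$, and the orthogonal projection onto $V$ takes the standard form $P_V = \Phi G^{-1} \Phi^*$. I would then seek $T^\dagger$ in the ansatz form $T^\dagger = \Phi A \Phi^*$ for some $A \in M_2(\C)$ to be determined. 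Direct composition yields $T T^\dagger = \Phi (DGA) \Phi^*$ and $T^\dagger T = \Phi (AGD) \Phi^*$, and both coincide with $P_V$ precisely when $A = G^{-1} D^{-1} G^{-1}$; this $A$ is uniquely pinned down because $\Phi$ is injective (linear independence) and $\Phi^*$ surjects onto $\C^2$, so $\Phi A' \Phi^* = \Phi A'' \Phi^*$ forces $A' = A''$.

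Since $|c|=1$ yields $D^{-1} = \mathrm{diag}(1,\bar c)$, the matrix $A$ is computed by a short $2 \times 2$ product from the explicit form $G^{-1} = k^{-1}\bigl(\begin{smallmatrix} \|\psi\|^2 & -\langle\varphi,\psi\rangle \\ -\langle\psi,\varphi\rangle & \|\varphi\|^2 \end{smallmatrix}\bigr)$; multiplying numerator and denominator by $c$ to match the common denominator $ck^2$ in the statement reproduces exactly the four coefficients of $|\varphi\rangle\langle\varphi|$, $|\varphi\rangle\langle\psi|$, $|\psi\rangle\langle\varphi|$, $|\psi\rangle\langle\psi|$ stated there. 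The four scalar pairings at the end of the lemma then fall out for free by evaluating $T^\dagger$ on $\varphi$ and $\psi$: using the identity $\|\varphi\|^2\|\psi\|^2 - |\langle\varphi,\psi\rangle|^2 = k$ to collapse the long numerators, one finds $T^\dagger \varphi = k^{-1}\bigl(\|\psi\|^2 \varphi - \langle\psi,\varphi\rangle \psi\bigr)$ and $T^\dagger \psi = \bar c\;\! k^{-1}\bigl(-\langle\varphi,\psi\rangle \varphi + \|\varphi\|^2 \psi\bigr)$, from which the four values $1,\bar c,0,0$ are immediate. There is no serious obstacle—the whole argument reduces to inverting a single $2 \times 2$ matrix—and the only point requiring care is the distinction between the coefficient matrix $A$ in the bra-ket expansion of $T^\dagger$ and the matrix $AG$ of $T^\dagger|_V$ in the non-orthonormal basis $\{\varphi,\psi\}$, a distinction that the operator-valued equation $DGA = G^{-1}$ cleanly absorbs.
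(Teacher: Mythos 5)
Your proof is correct, and it takes a genuinely different route from the paper's. The paper proceeds by brute force: it computes $T\varphi$, $T\psi$, $T^\dagger\varphi$, $T^\dagger\psi$ directly from the bra--ket expansions and then checks $T^\dagger T$, $TT^\dagger$ and the four pairings on the spanning vectors $\varphi,\psi$. You instead factorise $T=\Phi D\Phi^*$ through the synthesis map $\Phi:\C^2\to\H$ and the Gram matrix $G=\Phi^*\Phi$, use the standard frame formula $P_{\Span\{\varphi,\psi\}}=\Phi G^{-1}\Phi^*$, and reduce the whole lemma to the single $2\times2$ identity $A=G^{-1}D^{-1}G^{-1}$; I have checked that this product does reproduce the four stated coefficients over the common denominator $ck^2$, and that $T^\dagger v_j=\Phi G^{-1}D^{-1}e_j$ gives the two evaluation formulas (which coincide with the paper's intermediate computations of $T^\dagger\varphi$ and $T^\dagger\psi$) and hence the pairings $1,\overline c,0,0$. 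Your uniqueness remark (injectivity of $A'\mapsto\Phi A'\Phi^*$ from the injectivity of $\Phi$ and surjectivity of $\Phi^*$) is the right justification for passing between operator and matrix identities. What your approach buys is conceptual clarity: it exhibits $T^\dagger$ as the Moore--Penrose pseudoinverse $\Phi G^{-1}D^{-1}G^{-1}\Phi^*$ and thereby \emph{derives} the otherwise unmotivated formula in the statement, and it generalises verbatim to any finite rank $T=\Phi D\Phi^*$ with $D$ invertible. The paper's computation, by contrast, requires no auxiliary formalism and verifies the claim exactly as stated. The only caveat is that your sketch leaves the final $2\times2$ multiplication to the reader; writing out $G^{-1}D^{-1}G^{-1}$ explicitly (its entries are $\frac{1}{ck^2}\bigl(c\|\psi\|^4+|\langle\varphi,\psi\rangle|^2\bigr)$, etc.) would make the match with the statement airtight.
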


\begin{proof}
We first observe that 
\begin{equation*}
T\varphi=||\varphi||^2\varphi+c\langle\psi,\varphi\rangle \psi\qquad T\psi=\langle\varphi,\psi\rangle\varphi+c||\psi||^2\psi\ .
\end{equation*}
We can also compute
\begin{align*}
T^\dagger\varphi=&\frac1{ck^2}\Bigl[\Bigl(\big(c||\psi||^4+|\langle\varphi,\psi\rangle|^2\big)||\varphi||^2-\big(c||\psi||^2\langle\varphi,\psi\rangle+||\varphi||^2\langle\varphi,\psi\rangle\big)\langle\psi,\varphi\rangle\Bigr) \varphi \\
\phantom{=}&+\Bigl(-\big(c||\psi||^2\langle\psi,\varphi\rangle+||\varphi||^2\langle\psi,\varphi\rangle\big)||\varphi||^2+\big(||\varphi||^4+c|\langle\varphi,\psi\rangle|^2\big)\langle\psi,\varphi\rangle \Bigr)\psi\Bigr] \\
=&\frac1{ck^2}\Bigl[c||\psi||^2\big(||\psi||^2||\varphi||^2-|\langle\psi,\varphi\rangle|^2\big) \varphi +c\langle\psi,\varphi\rangle\big(|\langle\varphi,\psi\rangle|^2-||\psi||^2||\varphi||^2 \big)\psi\Bigr] \\
=&\frac1k\bigl[||\psi||^2\varphi-\langle\psi,\varphi\rangle\psi\bigr],
\end{align*}
and similarly one gets
\begin{align*}
T^\dagger \psi=& \frac1{ck^2}\Bigl[\Bigl(\big(c||\psi||^4+|\langle\varphi,\psi\rangle|^2\big)\langle\varphi,\psi\rangle-\big(c||\psi||^2\langle\varphi,\psi\rangle+||\varphi||^2\langle\varphi,\psi\rangle\big)||\psi||^2\Bigr) \varphi \\
\phantom{=}&+\Bigl(-\big(c||\psi||^2\langle\psi,\varphi\rangle+||\varphi||^2\langle\psi,\varphi\rangle\big)\langle\varphi,\psi\rangle+\big(||\varphi||^4+c|\langle\varphi,\psi\rangle|^2\big)||\psi||^2 \Bigr)\psi\Bigr] \\
=&\frac1{ck^2} \Bigl[\langle\varphi,\psi\rangle\big(|\langle\varphi,\psi\rangle|^2-||\psi||^2||\varphi||^2\big)\varphi+||\varphi||^2\big(||\psi||^2||\varphi||^2-|\langle\psi,\varphi\rangle|^2 \big)\psi\Bigr]\\
=&\frac1{ck}\big[-\langle\varphi,\psi\rangle\varphi+||\varphi||^2\psi\big].
\end{align*}

From these we obtain that
\begin{align*}
T^\dagger T\varphi=&||\varphi||^2T^\dagger \varphi+c\langle\psi,\varphi\rangle T^\dagger \psi\\
=&\frac{||\varphi||^2}{k}\bigl[||\psi||^2\varphi-\langle\psi,\varphi\rangle\psi\bigr]+\frac{c\langle\psi,\varphi\rangle }{ck}\big[-\langle\varphi,\psi\rangle\varphi+||\varphi||^2\psi\big] \\
=& \varphi,
\end{align*}
and
\begin{align*}
T T^\dagger \varphi=&\frac1k\bigl[||\psi||^2T \varphi-\langle\psi,\varphi\rangle T\psi\bigr]\\
=&\frac1k\Bigl[||\psi||^2\big(||\varphi||^2\varphi+c\langle\psi,\varphi\rangle \psi\big)-\langle\psi,\varphi\rangle \big(\langle\varphi,\psi\rangle\varphi+c||\psi||^2\psi\ \big)\Bigr] \\
=& \varphi.
\end{align*}
By a similar computation, one also gets $T^\dagger T\psi=\psi$ and $T T^\dagger \psi = \psi$.
The remaining equalities can also be obtained straightforwardly.
\end{proof}

%--------------------------------------------------------------------------------------

%--------------------------------------------------------------------------------------

\end{document}